\documentclass[a4paper,12pt]{article}
\usepackage{amsmath,amsthm,amssymb,amsfonts}
\pdfoutput=1
\usepackage{leftidx}
\usepackage{bbm,bm}
\usepackage{latexsym}
\usepackage{mathrsfs}
\usepackage{threeparttable}
\usepackage{tabularx}
\usepackage{booktabs}
\usepackage{multicol,graphics}
\usepackage{subfigure}
\usepackage[dvips]{graphicx}
\usepackage{rotating}
\usepackage{epstopdf}
\usepackage{color}
\usepackage[dvips]{graphicx}
\usepackage{multicol,graphics}
\usepackage{color}
\usepackage{subfigure}
\usepackage{indentfirst}
\usepackage{geometry}\geometry{left=2.5cm,right=2.5cm,top=2.5cm,bottom=2.5cm}
\usepackage{caption}
\usepackage{lineno}
\usepackage{enumerate,mdwlist}
\usepackage[numbers,sort&compress]{natbib}  
\usepackage{float}
\captionsetup{labelsep=period}



\pdfoutput=1
\theoremstyle{definition}
\newtheorem{The}{Theorem}[section]    
\newtheorem{Lem}[The]{Lemma}
\newtheorem{Cor}[The]{Corollary}
\newtheorem{Pro}[The]{Proposition}

\theoremstyle{remark}

\theoremstyle{definition}

\numberwithin{equation}{section}

\setlength{\baselineskip}{17pt}

\begin{document}
\title{Relations between global forcing number and maximum anti-forcing number of a graph\footnote{This work is supported by NSFC (Grant No. 11871256)}}
\author{Yaxian Zhang and Heping Zhang\thanks{Corresponding author.}}
\date{{\small School of Mathematics and Statistics, Lanzhou University,
 Lanzhou, Gansu 730000, P.R. China}\\
{\small E-mails:\ yxzhang2016@lzu.edu.cn, zhanghp@lzu.edu.cn}}

\maketitle

\begin{abstract}
 The global forcing number of a graph $G$ is the minimal cardinality of an  edge subset discriminating  all perfect matchings of $G$, denoted by $gf(G)$.
 For any perfect matching $M$ of $G$, the minimal cardinality of an edge subset $S\subseteq E(G)\setminus M$ such that $G-S$ has a unique perfect matching is called the anti-forcing number of $M$, denoted by $af(G,M)$. The maximum anti-forcing number of $G$ among all perfect matchings is denoted by $Af(G)$. It is known that the maximum anti-forcing number of a hexagonal system equals the famous Fries number.

We are interested in some comparisons between the global forcing number and the maximum anti-forcing number of a graph. For a bipartite graph $G$,  we show that $gf(G)\geq Af(G)$. Next we mainly extend such result to non-bipartite graphs. Let $\mathcal{G}$ be the set of all graphs with a perfect matching which contain no two disjoint odd cycles such that their deletion results in a subgraph with a perfect matching.  For any $G\in \mathcal{G}$, we also have $gf(G)\geq Af(G)$ by revealing further property of non-bipartite graphs with a unique perfect matching. As a consequence, this relation also holds for the graphs whose perfect matching polytopes consist of non-negative $1$-regular vectors. In particular, for a brick $G$, de Carvalho, Lucchesi and Murty \cite{de2004perfect} showed that $G\in \mathcal{G}$  if and only if $G$ is solid, and if and only if its perfect matching polytope consists of non-negative $1$-regular vectors.

Finally, we obtain tight upper and lower bounds on $gf(G)-Af(G)$. For a connected bipartite graph $G$ with $2n$ vertices, we have that $0\leq gf(G)-Af(G)\leq \frac{1}{2}(n-1)(n-2)$;  For non-bipartite case, $-\frac{1}{2}(n^2-n-2)\leq gf(G)-Af(G)\leq (n-1)(n-2)$.

    \vskip 0.1 in
    \noindent {\bf Keywords:} \ Perfect matching; Perfect matching polytope; Solid brick;  Maximum anti-forcing number; Global forcing number.
    \medskip
\end{abstract}
\section{Introduction}
    In this paper, we only consider finite simple graphs with at least one perfect matching. Let $G$ be a graph with vertex set $V(G)$ and edge set $E(G)$. We denote the order of $G$ by $v(G)=|V(G)|$, and the size  by $e(G)=|E(G)|$. An edge subset $M$ of $G$ is a \emph{perfect matching} (1-factor or a  \emph{Kekul\'e structure} in chemical literature), if every vertex of $G$ is incident with exactly one edge in $M$.

    In 1987, Klein and Randi\'c \cite{klein1987innate} introduced the \emph{innate degree of freedom} of a Kekul\'e structure, which plays an important role in the resonance theory in chemistry.  Harary et al. \cite{harary1991graphical} called it as forcing number:
    If a subset $S$ of a perfect matching $M$ in a graph $G$ is contained in exactly one perfect matching of $G$, then $S$ is a \emph{forcing set} of $M$. The minimum cardinality over all forcing sets of $M$ is called the \emph{forcing number} of $M$, denoted by $f(G,M)$. The minimum (resp. maximum) forcing number of $G$ is denoted by $f(G)$ (resp. $F(G)$). For a hypercube $Q_n$, Diwan \cite{Diwan2019} showed that $f(Q_n)=2^{n-2}$, confirming a conjecture by Pachter and Kim \cite{Pachter1998}.
    For a hexagonal system, Xu et al. \cite{xu2013maximum} and Zhou and Zhang \cite{Zhou2015forcing} showed that the maximum forcing number  is equal to its Clar number (resonant number). For polyomino graphs and (4,6)-fullerenes (also called Birkoff-Von Neumann graphs) the same result also holds (cf. \cite{zhang2016polyomino,Zhou2016polyomino,shi2017maximum}).

    As early as 1997, Li \cite{Li1997Hexagonal} raised the concept of \emph{forcing single edge} (i.e. anti-forcing edge). In 2007, Vuki\v{c}evi\'c and Trinajsti\'c \cite{vukiveevic2007anti,vukiveevic2008Kekule} introduced the \emph{anti-forcing number} of a graph $G$.
     In general, recently Lei et al. \cite{Lei2016Anti} and Klein and Rosenfeld \cite{Klein2014} independently defined the \emph{anti-forcing number} of a single perfect matching $M$ of a graph $G$.  A subset of $E(G)\setminus M$ is an anti-forcing set of $M$ if its  removal results in a graph with a unique perfect matching.
     The minimum cardinality of an anti-forcing set of $M$ is called the \emph{anti-forcing number} of $M$ in $G$, denoted by $af(G,M)$. The minimum (resp. maximum) anti-forcing number of $G$ is denoted by $af(G)$ (resp. $Af(G)$). Lei et al. \cite{Lei2016Anti}  and Shi et al. \cite{shi2017maximum} respectively showed that the maximum anti-forcing number of  hexagonal systems and (4,6)-fullerenes are  equal to their Fries numbers. For upper bounds on  maximum anti-forcing number of graphs, see \cite{Kai2017Anti,shi2017tight}

    The concept of ``global  forcing'' was introduced by Vuki\v cevi\'c \cite{vukivcevic2004total} to distinguish all perfect matchings of a graph. An edge subset $S$ of $G$ is called a \emph{global forcing set} of $G$, if no two distinct perfect matchings of $G$ coincide on $S$. The minimum cardinality of a global forcing set of $G$ is the \emph{global forcing number} of $G$, denoted by $gf(G)$. Since then, some scholars studied the global forcing numbers of some chemical graphs, see \cite{D.T2007Global,cai2012global,liu2014global,zhang2014global,sedlar2012global,vukivcevic2004total}.

    In this paper, we readily find that $gf(G)\geq F(G)$ for any graph $G$. A relation $Af(G)\geq F(G)$
    has been already revealed  \cite{Lei2016Anti}.  Do\v sli\'c \cite{D.T2007Global} and Deng and Zhang \cite{deng2017extremal} respectively proved that the global forcing number and the maximum forcing number  of a graph have the cyclomatic number as a common upper bound.
    Motivated by  the above facts, it is natural to consider relations between the global forcing number and the maximum anti-forcing number of a graph.

   For (4,6)-fullerene graph $G$, a plane cubic graph whose faces are hexagons and squares,  Cai and Zhang \cite{cai2012global} gave a sharp lower bound on  global forcing number:  $gf(G)\geq \lceil \frac{2f}{3}\rceil$, and Shi and Zhang \cite{shi2017maximum} obtained a formula  for the maximum anti-forcing number: $Af(G)= \left \lfloor \frac{v}{3} \right \rfloor+2 $, where  $f$ and $v$ are  the numbers of faces and vertices of $G$ respectively. By Euler's formula we have $2f=v+4$, which implies that $\lceil \frac{2f}{3}\rceil= \left\lfloor \frac{v}{3} \right \rfloor+2 $. Accordingly we have that $gf(G)\ge Af(G)$ for any (4,6)-fullerene graph $G$.

   For a general bipartite graph $G$ with $2n$ vertices, luckily we can prove that $gf(G)\geq Af(G)$ by finding a special edge $e$ of $G$ with $gf(G)\geq gf(G-e)+1$ and $Af(G-e)+1\geq Af(G)$. Moreover, we get that $gf(G)-Af(G)\leq \frac{1}{2}(n-1)(n-2)$,  and equality holds if and only if $G$ is isomorphic to $K_{n,n}$.

   The next  question is whether we can extend the above relation $gf(G)\geq Af(G)$ to non-bipartite graphs. In this paper we give a negative answer  by  constructing a matching covered graph $G_k$ for any positive integer $k$ such that $gf(G_k)-Af(G_k)=-k$. However we can extend such a relation to 
   conformal graphs which has a close connection with classical problems as perfect matching polytope and tight cut decomposition in matching theory of graphs \cite{lovasz2009matching,edmonds1982brick,edmonds1965maximum,lovasz1987matching}.

    Let $\mathcal{G}$ denote the set of graphs $G$ with a perfect matching such that $G$ contains no two disjoint odd cycles $C$ and $C'$ such that $G-C-C'$ has a perfect matching. We reveal a novel substructure of a graph with  a unique perfect matching $M$ and the minimum degree larger than $1$, which consists of two disjoint odd cycles and an odd path connecting them such that $M$ contains a perfect matching of it. Based on this substructure, we show that $gf(G)\geq Af(G)$ for a graph $G\in \mathcal{G}$ by finding a $1$-degree vertex from a special subgraph of $G$ with a unique perfect matching. As a corollary, we have that for a graph $G$ whose perfect matching polytope  consists  of non-negative $1$-regular vectors,  $G\in \mathcal{G}$ and thus  $gf(G)\geq Af(G)$.

    In particular, for a brick $G$ (3-connected bicritical graph),  de Carvalho, Lucchesi and Murty \cite{de2004perfect} showed that $G\in \mathcal{G}$  if and only if the perfect matching polytope of $G$ consists  of non-negative $1$-regular vectors, and if and only if $G$ is solid (without non-trivial separating cuts). For a general graph, the above three conditions are not necessarily equivalent.
    For more details  on solid bricks, see \cite{de2012Pfaffian,Lucchesi2018unsolved,edmonds1982brick,de2002conjecture, Chen2019solid, de2006brick}. In 2013, Kawarabayashi \cite{Kawarabayashi2013disjoint} gave a simpler proof for the characterization of the graphs without two disjoint odd cycles.

    Finally, for a connected graph $G$, we get tight upper and lower bounds on $gf(G)-Af(G)$: $-\frac{1}{2}(n^2-n-2)\leq gf(G)-Af(G)\leq (n-1)(n-2)$, where the second equality holds if $G$ is isomorphic to complete graph $K_{2n}$.


\section{Definitions and some preliminary results}
    \label{preliminaries}
    For a vertex $v$ in $G$, we denote the set of all neighbors of $v$ by $N_G(v)$. And the cardinality of $N_G(v)$ is said to be the \emph{degree} of $v$, denoted by $d_G{(v)}$. Let $\Delta(G)$ and $\delta(G)$ denote the maximum and minimum degree of $G$ respectively.
    Lei et al. \cite{Lei2016Anti} showed the following relations between the forcing number and the anti-forcing number of a perfect matching of a graph.
    \begin{The}
        {\rm \cite{Lei2016Anti}}
        \label{Af(G)<()F(G)}
        Let $G$ be a graph with a perfect matching $M$. Then
            $f(G,M)\leq af(G,M)\leq (\Delta(G)-1)f(G,M)$, and thus $F(G)\leq Af(G)\leq (\Delta(G)-1)F(G)$.
    \end{The}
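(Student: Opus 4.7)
My plan is to reformulate forcing and anti-forcing sets as transversals of the family of $M$-alternating cycles: a set $S\subseteq M$ is a forcing set of $M$ if and only if every $M$-alternating cycle contains an edge of $S$, and dually a set $T\subseteq E(G)\setminus M$ is an anti-forcing set of $M$ if and only if every such cycle contains an edge of $T$. Both characterizations I would verify at the start by taking symmetric differences with a hypothetical second perfect matching $M'$, so that $M\triangle M'$ decomposes into $M$-alternating cycles that either must be hit by $S$ (from inside $M$) or by $T$ (from outside $M$). From this vantage point both parameters become minimum transversal sizes for the same hypergraph, restricted to the two ``color classes'' $M$ and $E(G)\setminus M$, and each inequality amounts to converting a transversal of one type into one of the other.

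For $f(G,M)\le af(G,M)$ I would proceed by a direct \emph{projection}. Starting from a minimum anti-forcing set $T$, I fix once and for all an arbitrary endpoint $u_e$ of each $e=uv\in T$ and let $\phi(e)$ be the unique $M$-edge incident to $u_e$. Then $S:=\{\phi(e):e\in T\}\subseteq M$ has size at most $|T|=af(G,M)$. To check $S$ is forcing, I take any $M$-alternating cycle $C$; since $T$ is anti-forcing, some $e\in T\cap C$ exists, and by $M$-alternation together with $M$ being perfect the $M$-edge of $C$ at $u_e$ is forced to be exactly $\phi(e)$, so $\phi(e)\in S\cap C$.

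For $af(G,M)\le (\Delta(G)-1)f(G,M)$ I would run the symmetric construction by \emph{widening}. Starting from a minimum forcing set $S$, for each $e=uv\in S$ I pick an endpoint $u_e$ and throw into $T$ all non-$M$ edges of $G$ incident to $u_e$; because the only $M$-edge at $u_e$ is $e$ itself, this contributes at most $d_G(u_e)-1\le \Delta(G)-1$ edges per $e$, giving $|T|\le (\Delta(G)-1)|S|$. To verify $T$ is anti-forcing I again test against an arbitrary $M$-alternating cycle $C$: the forcing property of $S$ furnishes some $e\in S\cap C$, and the non-$M$ edge of $C$ at $u_e$ is a non-$M$ neighbour of $u_e$, hence lies in $T$.

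The graph-level statement $F(G)\le Af(G)\le (\Delta(G)-1)F(G)$ then drops out by monotonicity of the maximum: apply the pointwise inequalities at a perfect matching realising $F(G)$ and at one realising $Af(G)$ respectively. The only real conceptual step I expect any friction on is the observation underlying the projection construction — namely that the choice of endpoint $u_e$ is truly arbitrary because \emph{both} $M$-edges adjacent to $e$ on any $M$-alternating cycle through $e$ are automatically present in that cycle — after which the rest of the argument is routine bookkeeping.
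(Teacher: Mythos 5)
Your argument is correct. Note that the paper does not prove this statement at all: it is quoted as Theorem \ref{Af(G)<()F(G)} directly from Lei, Yeh and Zhang \cite{Lei2016Anti}, so there is no in-paper proof to compare against. Your transversal reformulation is exactly the content of Lemmas \ref{forcing set} and \ref{anti-forcing} already recorded in Section \ref{preliminaries}, and both constructions (projecting each anti-forcing edge to the unique $M$-edge at a chosen endpoint, and widening each forcing edge $e$ to the at most $d_G(u_e)-1\leq\Delta(G)-1$ non-$M$ edges at one of its endpoints) are the standard arguments from the cited source; the alternation check that the produced set meets every $M$-alternating cycle, and the passage to $F(G)$ and $Af(G)$ by evaluating the pointwise inequalities at extremal matchings, are all sound.
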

     The  cyclomatic number of a connected graph $G$ is $c(G)=e(G)-v(G)+1$. In 2017, Deng and Zhang \cite{deng2017extremal} proved that the maximum forcing number of $G$ is less than or equal to $c(G)$.

    \begin{The}
   {\rm\cite{deng2017extremal}}
   \label{Af(G)<c(G)}
   Let $G$ be a connected graph with a perfect matching. Then $Af(G)\leq c(G)$. If $G$ is non-bipartite,
   then $Af(G)<c(G)$.
   \end{The}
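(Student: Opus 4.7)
For the first inequality, the plan is to use a spanning tree containing $M$. Fix any perfect matching $M$ of $G$. Since $M$ is a forest and $G$ is connected, $M$ extends to a spanning tree $T$ of $G$ with $M\subseteq E(T)$. Setting $S:=E(G)\setminus E(T)$, we have $|S|=e(G)-v(G)+1=c(G)$ and $S\cap M=\emptyset$, while $G-S=T$ is a tree. Any tree with a perfect matching has a unique one (each leaf forces its incident edge into every perfect matching, and one inducts on the smaller forest). Hence $M$ is the unique perfect matching of $G-S$, so $af(G,M)\le c(G)$, and consequently $Af(G)\le c(G)$.

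For the strict inequality in the non-bipartite case, I would refine this by \emph{keeping} one carefully chosen non-tree edge so that the remaining graph is unicyclic with an odd cycle. Pick any odd cycle $C_{0}$ of $G$. The fundamental cycles $C_{e}:=C(T,e)$, for $e\in E(G)\setminus E(T)$, form a basis of the $\mathbb{F}_{2}$-cycle space of $G$, so we may write $C_{0}=\sum_{e\in A}C_{e}$ (symmetric-difference sum of edge sets) for some non-empty $A\subseteq E(G)\setminus E(T)$. Because $|X\triangle Y|\equiv|X|+|Y|\pmod{2}$, the oddness of $|E(C_{0})|$ forces some $e^{\ast}\in A$ to have $|E(C_{e^{\ast}})|$ odd; otherwise the total parity would be even.

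Let $H$ be the spanning subgraph of $G$ with edge set $E(T)\cup\{e^{\ast}\}$; it is unicyclic with unique cycle $C_{e^{\ast}}$, which is odd. If $M'$ were another perfect matching of $H$, then $M\triangle M'$ would be a non-empty disjoint union of $M$-alternating cycles of even length, each a simple cycle of $H$; but the only simple cycle of $H$ is $C_{e^{\ast}}$, of odd length — contradiction. Hence $M$ is the unique perfect matching of $H$, and deleting $E(G)\setminus E(H)$ (which contains exactly $c(G)-1$ edges and none from $M$) from $G$ leaves $H$. This yields $af(G,M)\le c(G)-1$ and therefore $Af(G)<c(G)$. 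I expect the only real friction to be the $\mathbb{F}_{2}$ cycle-space step that produces an odd fundamental cycle; everything else is a routine spanning-tree/unicyclic argument.
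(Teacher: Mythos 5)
This theorem is quoted from \cite{deng2017extremal}; the paper itself gives no proof, so there is no in-paper argument to compare against. Your proof is correct and self-contained. Extending $M$ to a spanning tree $T$ yields an anti-forcing set $E(G)\setminus E(T)$ of size $c(G)$ disjoint from $M$, since a tree has at most one perfect matching; and in the non-bipartite case your parity computation over the fundamental-cycle basis (using $|X\triangle Y|\equiv |X|+|Y|\pmod 2$ and the fact that $C_0$ is a symmetric difference of the fundamental cycles of its chords) does produce a chord $e^{\ast}$ with odd fundamental cycle, so $T+e^{\ast}$ is unicyclic with only an odd cycle and therefore keeps $M$ as its unique perfect matching, giving $af(G,M)\le c(G)-1$ uniformly over all $M$ and hence $Af(G)<c(G)$. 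The only facts you rely on implicitly --- that the symmetric difference of two distinct perfect matchings is a nonempty disjoint union of even cycles, and that the chord set $A=E(C_0)\setminus E(T)$ is nonempty --- are both standard and correctly invoked.
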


   For  two sets $A$ and $B$, the \emph{symmetric difference} of $A$ and $B$ is  defined by $A\oplus B=A\cup B-A\cap B$.  Let $G$ be a graph with at least one perfect matching $M$. We say a subgraph $H$ of $G$ is \emph{nice} if $G-V(H)$ has a perfect matching. For convenience, we denote $G-V(H)$ by $G-H$. An even  cycle $C$ of $G$ is called an \emph{M-alternating cycle}, if the edges of $C$ appear alternately in $M$ or not. An even cycle  $C$ is a nice cycle of $G$ if and only if  $G$ has two perfect matchings $M_1$ and $M_2$ such that $M_1\oplus M_2=E(C)$. 
    In 2007, Do\v sli\'c \cite{D.T2007Global} gave the following result.
   \begin{The}
   {\rm\cite{D.T2007Global}}
   \label{gf(G)<c(G)}
   Let $G$ be a connected graph with a perfect matching. Then $gf(G)\leq c(G)$, and equality holds if and only if all cycles of $G$ are nice.
   \end{The}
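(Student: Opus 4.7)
The plan is to recast the definition of a global forcing set in cycle-theoretic terms and then reduce both inequalities to a fact about feedback edge sets in a spanning tree.

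First I would establish a structural characterization: an edge set $S \subseteq E(G)$ is a global forcing set if and only if $S$ meets every nice cycle of $G$. One direction uses that if $C$ is a nice cycle, then $G$ admits two perfect matchings $M_1, M_2$ with $M_1 \oplus M_2 = E(C)$ (this equivalence is already recorded in the excerpt), so $S \cap E(C) \ne \emptyset$ is forced. For the converse, if $M_1 \ne M_2$ are distinct perfect matchings, then $M_1 \oplus M_2$ decomposes into edge-disjoint even cycles, each of which is a nice cycle of $G$; thus $S$ hits at least one of them, which suffices to distinguish $M_1$ from $M_2$ on $S$.

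For the upper bound $gf(G) \le c(G)$, pick any spanning tree $T$ of $G$. The cotree $E(G) \setminus E(T)$ has exactly $c(G) = e(G) - v(G) + 1$ edges, and every cycle of $G$ (in particular every nice cycle) must contain at least one non-tree edge. By the characterization above, the cotree is a global forcing set, so $gf(G) \le c(G)$. If in addition every cycle of $G$ is nice, then a global forcing set must meet \emph{every} cycle, i.e.\ it is a feedback edge set; since the minimum feedback edge set of a connected graph has size $c(G)$, this forces $gf(G) \ge c(G)$, and equality follows.

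The remaining direction is to show that if some cycle $C$ of $G$ is not nice, then $gf(G) \le c(G) - 1$. The idea is to tailor the spanning tree to $C$: since $E(C) \setminus \{e'\}$ is a path for any chosen edge $e' \in E(C)$, it is acyclic and extends to a spanning tree $T$ of $G$. Then $C$ is precisely the fundamental cycle of $e'$ with respect to $T$, so $C$ is the unique cycle of $G$ whose only non-tree edge is $e'$. Consequently, every cycle $C' \ne C$ contains some non-tree edge different from $e'$, and in particular every nice cycle (being distinct from $C$) is met by $(E(G) \setminus E(T)) \setminus \{e'\}$. This exhibits a global forcing set of size $c(G) - 1$, completing the proof.

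The main obstacle is the second characterization step above: one needs the decomposition of $M_1 \oplus M_2$ into alternating cycles and the observation that each component of that decomposition is nice in $G$ (by restricting $M_1, M_2$ to the cycle and its complement). Once that is in hand, the rest of the argument is a straightforward spanning-tree/cotree manipulation.
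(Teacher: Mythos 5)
Your proof is correct. Note that the paper does not prove this statement itself --- it is quoted from Do\v sli\'c \cite{D.T2007Global} --- so there is no in-paper argument to compare against; but your route is the standard one and is consistent with the tools the paper does record (the nice-cycle characterization of global forcing sets in Lemma~\ref{global forcing} and the cotree/feedback-edge-set counting underlying Lemma~\ref{GF.T}). In particular, your two nontrivial steps check out: each component of $M_1\oplus M_2$ is indeed a nice cycle, so hitting one edge of it separates $M_1$ from $M_2$ on $S$; and in the non-nice case, choosing the spanning tree to contain $E(C)\setminus\{e'\}$ makes $C$ the unique cycle whose only cotree edge is $e'$, so deleting $e'$ from the cotree still meets every nice cycle and yields a global forcing set of size $c(G)-1$.
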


     We denote the set of all perfect matchings of a graph $G$ by $\mathcal{M}$, and $\Phi(G)=|\mathcal{M}|$. Do\v sli\'c gave the following lower bound on the global forcing number.
    \begin{The}
        {\rm\cite{D.T2007Global}}
        \label{gf>phi(G)}
        Let $G$ be a graph with a perfect matching. Then $gf(G)\geq \lceil \log_2\Phi (G)\rceil$.
    \end{The}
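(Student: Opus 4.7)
The plan is to use a simple counting argument: any global forcing set $S$ induces an injection from the set $\mathcal{M}$ of perfect matchings into the power set of $S$, so $|\mathcal{M}|$ cannot exceed $2^{|S|}$.

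More concretely, I would fix a minimum global forcing set $S \subseteq E(G)$, so that $|S| = gf(G)$, and then consider the restriction map $\varphi\colon \mathcal{M} \to 2^{S}$ defined by $\varphi(M) = M \cap S$. The defining property of a global forcing set is precisely that no two distinct perfect matchings agree on $S$, i.e.\ $M_1 \cap S = M_2 \cap S$ implies $M_1 = M_2$. Hence $\varphi$ is injective.

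Counting then gives $\Phi(G) = |\mathcal{M}| \leq |2^{S}| = 2^{|S|} = 2^{gf(G)}$. Taking logarithms yields $gf(G) \geq \log_2 \Phi(G)$, and since $gf(G)$ is an integer we may strengthen this to $gf(G) \geq \lceil \log_2 \Phi(G)\rceil$, as required.

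There is no real obstacle here; the only thing worth double-checking is that the definition of a global forcing set is being applied correctly, namely that the injectivity of $\varphi$ follows immediately from ``no two distinct perfect matchings of $G$ coincide on $S$,'' which is exactly the statement needed.
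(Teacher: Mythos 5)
Your argument is correct and is exactly the standard counting proof of this bound: the paper itself states this result as a citation to Do\v sli\'c without reproducing a proof, and the injectivity of $M\mapsto M\cap S$ is precisely the defining property of a global forcing set, so $\Phi(G)\leq 2^{gf(G)}$ and the ceiling follows from integrality of $gf(G)$. Nothing is missing.
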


   The following results give equivalent conditions for  forcing sets,  anti-forcing sets and  global forcing sets of a graph.
     \begin{Lem}
        {\rm\cite{Riddle2002The}}
        \label{forcing set}
        Let $G$ be a graph with a perfect matching $M$. Then an edge subset $S\subseteq M$ is a forcing set of $M$ if and only if $S$ contains an edge from every $M$-alternating cycle.
    \end{Lem}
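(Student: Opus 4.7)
The plan is to prove the biconditional by handling each direction via a contradiction argument based on the standard fact that the symmetric difference of two distinct perfect matchings decomposes into a disjoint union of $M$-alternating cycles.

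For the forward implication, I would assume that $S\subseteq M$ is a forcing set of $M$ and suppose, for contradiction, that some $M$-alternating cycle $C$ avoids $S$, i.e. $E(C)\cap S=\emptyset$. The key observation is then that $M':=M\oplus E(C)$ is again a perfect matching of $G$ (this is immediate from the definition of $M$-alternating cycle), and $M'\neq M$. Since $E(C)$ is disjoint from $S$, we have $S\subseteq M\cap M'$, so $S$ is contained in two distinct perfect matchings, contradicting that $S$ forces $M$ uniquely.

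For the reverse implication, I would assume $S$ meets every $M$-alternating cycle and suppose, for contradiction, that some perfect matching $M'\neq M$ of $G$ also contains $S$. Then $M\oplus M'$ is nonempty and decomposes as a disjoint union of even cycles, each of which is $M$-alternating (and also $M'$-alternating). Picking any such cycle $C$, the hypothesis guarantees an edge $e\in S\cap E(C)$. Since $e\in S\subseteq M\cap M'$, this edge lies in both $M$ and $M'$, so $e\notin M\oplus M'$, contradicting $e\in E(C)\subseteq M\oplus M'$.

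Both directions are essentially routine once one has the alternating-cycle decomposition of $M\oplus M'$ in hand; the only thing to be careful about is recording the two equivalent ways of producing an alternating cycle, namely either as a component of $M\oplus M'$ for a given alternative matching $M'$, or conversely as a cycle whose symmetric difference with $M$ produces a new perfect matching. I do not foresee any real obstacle, since the argument is structural and does not depend on bipartiteness or any further hypothesis on $G$.
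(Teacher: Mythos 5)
Your proof is correct. The paper does not prove this lemma itself but simply cites it from Riddle's work; your two-direction argument via the symmetric-difference decomposition of $M\oplus M'$ into $M$-alternating cycles (and, conversely, the fact that $M\oplus E(C)$ is a new perfect matching whenever $C$ is an $M$-alternating cycle disjoint from $S$) is exactly the standard proof of this equivalence, and both directions are carried out without gaps.
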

    \begin{Lem}
        {\rm\cite{Kai2017Anti}}
        \label{anti-forcing}
         Let $G$ be a graph with a perfect matching $M$. Then an edge subset $S\subseteq E(G)\setminus M$ is an anti-forcing set of $M$ if and only if $S$ contains at least one edge of every $M$-alternating of $G$.
    \end{Lem}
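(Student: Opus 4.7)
The statement to prove is the characterization of anti-forcing sets as transversals of the family of $M$-alternating cycles. The plan is a standard symmetric-difference argument, handling each direction by contrapositive.

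For the necessity direction, I would argue by contrapositive. Suppose $S\subseteq E(G)\setminus M$ fails to meet some $M$-alternating cycle $C$, so that $E(C)\subseteq E(G)\setminus S$. Then $E(C)$ is still present in $G-S$, and because $M\cap S=\emptyset$, the matching $M$ itself survives in $G-S$. The symmetric difference $M':=M\oplus E(C)$ is a perfect matching of $G$ distinct from $M$ and contained in $E(G)\setminus S$, so $M'$ is also a perfect matching of $G-S$. Thus $G-S$ has at least two perfect matchings, so $S$ is not an anti-forcing set of $M$.

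For the sufficiency direction, again via contrapositive, suppose $S$ is not an anti-forcing set of $M$. Then $G-S$ has a perfect matching $M'\neq M$ (using $M\subseteq E(G)\setminus S$ to know $M$ itself is one). The symmetric difference $M\oplus M'$ is a nonempty subgraph of $G-S$ whose every vertex has degree $0$ or $2$, with degree-$2$ vertices alternating between $M$ and $M'$ edges; hence it decomposes into vertex-disjoint $M$-alternating cycles, all lying in $G-S$. Taking any one such cycle $C$ gives an $M$-alternating cycle of $G$ whose edge set avoids $S$, contradicting the assumption that $S$ meets every $M$-alternating cycle.

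Both directions reduce to the elementary fact that the symmetric difference of two perfect matchings is a disjoint union of even alternating cycles, so there is no substantive obstacle; the main thing to verify carefully is that $M$ remains a perfect matching of $G-S$ (which follows from $S\subseteq E(G)\setminus M$), ensuring that any alternative matching $M'$ in $G-S$ genuinely yields $M$-alternating cycles in $G$ disjoint from $S$.
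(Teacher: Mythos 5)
Your proof is correct. The paper cites this lemma from Deng and Zhang without reproducing a proof, and your argument --- both directions by contrapositive via the fact that the symmetric difference of two perfect matchings decomposes into disjoint $M$-alternating cycles, together with the observation that $M$ survives in $G-S$ because $S\subseteq E(G)\setminus M$ --- is exactly the standard argument behind the cited result.
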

    \begin{Lem}
        {\rm\cite{cai2012global}}
         \label{global forcing}
         Let $G$ be a graph with a perfect matching. Then an edge subset $S$ is a global forcing set of $G$ if and only if $S$ intersects each nice cycle of $G$.
    \end{Lem}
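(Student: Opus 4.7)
The plan is to prove both directions by exploiting the correspondence between pairs of perfect matchings and their symmetric differences, and invoking the characterization stated just before Theorem~\ref{gf(G)<c(G)}: an even cycle $C$ is nice if and only if $G$ has two perfect matchings $M_1,M_2$ with $M_1\oplus M_2=E(C)$.

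For the forward direction, suppose $S$ is a global forcing set and let $C$ be any nice cycle of $G$. By the cited remark, there exist two perfect matchings $M_1,M_2$ with $M_1\oplus M_2=E(C)$. Since $M_1\neq M_2$ and $S$ globally forces, we must have $M_1\cap S\neq M_2\cap S$, equivalently $(M_1\oplus M_2)\cap S\neq\emptyset$. Hence $E(C)\cap S\neq\emptyset$, i.e. $S$ meets $C$.

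For the reverse direction, assume $S$ meets every nice cycle of $G$, and suppose for contradiction that $S$ is not a global forcing set. Then there exist distinct perfect matchings $M_1\neq M_2$ with $M_1\cap S=M_2\cap S$, which is equivalent to $(M_1\oplus M_2)\cap S=\emptyset$. The subgraph induced by $M_1\oplus M_2$ is non-empty and every vertex in it has degree $0$ or $2$, so it decomposes into a disjoint union of even cycles that alternate between $M_1$ and $M_2$. Pick any such component cycle $C$. The $M_1$-edges not lying on $C$ form a perfect matching of $G-V(C)$, so $C$ is nice. But $E(C)\subseteq M_1\oplus M_2$ is disjoint from $S$, contradicting the assumption that $S$ intersects every nice cycle.

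The argument is essentially a direct unpacking of definitions; the only substantive ingredient is the classical fact that $M_1\oplus M_2$ decomposes into even alternating cycles, each of which is seen to be nice by restricting $M_1$ to the complement of the cycle's vertex set. I therefore do not anticipate any real obstacle — the lemma's content is really just the observation that a ``test'' of distinguishability between perfect matchings can be reduced to a test on the cycles generated by pairwise symmetric differences.
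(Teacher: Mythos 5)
Your proof is correct, and since the paper cites this lemma from \cite{cai2012global} without reproducing a proof, there is nothing to diverge from: your argument (reducing distinguishability of two perfect matchings on $S$ to whether $S$ meets the alternating even cycles of their symmetric difference, each of which is nice) is exactly the standard one underlying the cited result. No gaps.
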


    From these equivalent definitions, we can find some correlations.

    \begin{The}
        \label{gf>F}
        Let $G$ be a graph with a perfect matching. Then $gf(G)\geq F(G)$.
    \end{The}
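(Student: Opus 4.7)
The plan is to leverage the three characterizations from Lemmas \ref{forcing set}--\ref{global forcing}, which all speak the language of ``hitting certain cycles,'' and to build a forcing set out of a minimum global forcing set by a simple edge-replacement map.

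First, fix a perfect matching $M_0$ of $G$ that attains the maximum forcing number, so that $f(G,M_0)=F(G)$. Let $S$ be a minimum global forcing set of $G$, i.e.\ $|S|=gf(G)$. The key observation is that every $M_0$-alternating cycle $C$ is automatically a nice cycle, because $M_0\oplus E(C)$ is a second perfect matching of $G$ (so $G-V(C)$ inherits the restriction of $M_0$ as a perfect matching). Therefore, by Lemma \ref{global forcing}, $S\cap E(C)\neq\emptyset$ for every $M_0$-alternating cycle $C$.

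Next I would convert $S$ into a subset of $M_0$ without increasing its size. Define a map $\phi\colon S\to M_0$ as follows: pick an arbitrary orientation of each edge of $G$; for $e=uv\in S$, set $\phi(e)=e$ if $e\in M_0$, and otherwise let $\phi(e)$ be the unique edge of $M_0$ incident with the tail $u$ of $e$. Clearly $\phi(S)\subseteq M_0$ and $|\phi(S)|\le|S|$. I claim $\phi(S)$ meets every $M_0$-alternating cycle $C$: choose $e\in S\cap E(C)$; if $e\in M_0$ then $\phi(e)=e\in E(C)$, and if $e\notin M_0$ then $e$ is a non-matching edge of $C$, so the $M_0$-edge incident with $u$ is exactly the $M_0$-edge of $C$ at $u$, hence $\phi(e)\in E(C)$. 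By Lemma \ref{forcing set}, $\phi(S)$ is a forcing set of $M_0$.

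Combining these pieces gives $F(G)=f(G,M_0)\le|\phi(S)|\le|S|=gf(G)$, as desired. There is no real obstacle here; the only point that needs care is the observation that $M_0$-alternating cycles are genuinely nice cycles (so that a global forcing set has a chance of covering them), and then the construction of $\phi$ ensures that the replacement edge still lies on the alternating cycle being tested.
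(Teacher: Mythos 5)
Your proof is correct and takes essentially the same route as the paper: both fix a perfect matching $M$ attaining $F(G)$, observe that a minimum global forcing set must meet every $M$-alternating cycle (because such cycles are nice), and then replace each non-matching edge by an adjacent edge of $M$ to obtain a forcing set of $M$ of no larger cardinality. Your write-up merely makes explicit two points the paper leaves implicit, namely that $M$-alternating cycles are nice and that the replacement edge still lies on the alternating cycle in question.
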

    \begin{proof}
       Let $S$ be a minimum global forcing set of $G$ and let $M$ be  a perfect matching  of $G$ with $f(G,M)=F(G)$. Then $gf(G)=|S|$, and $S$ intersects every $M$-alternating cycle by Lemma \ref{global forcing}.  If $S\subseteq M$, then $S$ is a forcing set of $M$ from Lemma \ref{forcing set}. Thus  $gf(G)=|S|\geq f(G,M)=F(G)$.
        Otherwise,  for each edge $e$ in $S$ not in $M$ there exists an edge $e'$ in $M$  which is adjacent to $e$. Replacing all edges $e$ in $S\setminus M$ with the edges $e'$ in $M$ and the other edges of $S$ remaining unchanged we get a new edge subset $S'$. Then  $S'\subseteq M$ and $S'$ intersect every $M$-alternating cycle. So $S'$ is a forcing set of $M$ by Lemma \ref{forcing set}, and
       $gf(G)= \left|S\right|\geq \left|S'\right|\geq f(G,M)=F(G)$.
    \end{proof}

    \begin{Cor}
        Let $G$ be a graph with a perfect matching. Then
            $gf(G)-Af(G)\geq (2-\Delta(G))F(G)$.
    \end{Cor}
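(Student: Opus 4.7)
The plan is to obtain the corollary by directly subtracting two inequalities that have already been established in the excerpt, so the proof should be essentially a one-line algebraic manipulation with no new combinatorial content.

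First I would invoke Theorem \ref{gf>F}, which gives $gf(G) \geq F(G)$. Then I would invoke the second inequality in Theorem \ref{Af(G)<()F(G)}, namely $Af(G) \leq (\Delta(G)-1)F(G)$, which rearranges to $-Af(G) \geq -(\Delta(G)-1)F(G)$. Adding the two inequalities term by term yields
\[
gf(G) - Af(G) \;\geq\; F(G) - (\Delta(G)-1)F(G) \;=\; (2-\Delta(G))F(G),
\]
which is exactly the statement of the corollary.

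The only mild subtlety is to make sure the inequalities point in compatible directions before combining them; once that is observed, no case analysis or structural argument about $G$ is needed. I do not expect any genuine obstacle here, since both ingredients have already been proved (Theorem \ref{Af(G)<()F(G)} was cited from \cite{Lei2016Anti}, and Theorem \ref{gf>F} was proved in the preceding paragraph using the nice-cycle and $M$-alternating cycle characterizations of Lemmas \ref{forcing set} and \ref{global forcing}). Hence the proof of the corollary reduces to invoking these two bounds and performing the subtraction above.
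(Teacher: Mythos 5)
Your proposal is correct and matches the paper's proof exactly: the paper also obtains the corollary immediately by combining $gf(G)\geq F(G)$ from Theorem \ref{gf>F} with $Af(G)\leq (\Delta(G)-1)F(G)$ from Theorem \ref{Af(G)<()F(G)}. No further comment is needed.
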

    \begin{proof}
        It is immediate from  Theorems \ref{Af(G)<()F(G)} and \ref{gf>F}.
    \end{proof}

     Let $H$ be a subgraph of $G$. For any $F\subseteq E(G)$, we denote $F\cap E(H)$ by $F|_H$.
         \begin{Lem}
        {\rm\cite{cai2012global}}
        \label{nice subgraph}
        An edge subset $S$ is a global forcing set of a graph $G$ if and only if for each nice subgraph $H$ of $G$, $S|_H$ is a global forcing set of $H$.
    \end{Lem}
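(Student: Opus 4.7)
The plan is to derive this lemma from the nice-cycle characterization of global forcing sets supplied by Lemma \ref{global forcing}. The ``if'' direction is essentially trivial: $G$ itself is a nice subgraph of $G$ (since $G-V(G)$ is the empty graph, which admits the empty perfect matching), so applying the hypothesis with $H=G$ yields $S|_G=S$ as a global forcing set of $G$.

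For the ``only if'' direction, I would assume that $S$ is a global forcing set of $G$ and fix an arbitrary nice subgraph $H$ of $G$. By Lemma \ref{global forcing}, to show $S|_H$ is a global forcing set of $H$ it suffices to prove that $S|_H$ meets every nice cycle $C$ of $H$. The key step is to lift such a $C$ to a nice cycle of $G$: because $C$ is a nice cycle of $H$, there is a perfect matching $M_1$ of $H-V(C)$; and because $H$ is nice in $G$, there is a perfect matching $M_2$ of $G-V(H)$. The vertex sets $V(H)\setminus V(C)$ and $V(G)\setminus V(H)$ are disjoint and together exhaust $V(G)\setminus V(C)$, so $M_1\cup M_2$ is a perfect matching of $G-V(C)$. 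Hence $C$ is a nice cycle of $G$, and applying Lemma \ref{global forcing} inside $G$ gives $S\cap E(C)\neq\emptyset$. Since $E(C)\subseteq E(H)$ this forces $S|_H\cap E(C)\neq\emptyset$, completing the argument.

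There is essentially no obstacle here; the only thing to watch is the straightforward vertex-disjointness bookkeeping when concatenating $M_1$ and $M_2$, together with checking the edge case $V(H)=V(G)$ (where $M_2=\emptyset$ and the argument still works). Conceptually, the lemma is a restriction principle: any potential obstruction to $S|_H$ being global forcing inside $H$, namely a nice cycle of $H$ avoided by $S$, automatically extends to an obstruction inside $G$ by attaching a perfect matching of the complementary part $G-V(H)$.
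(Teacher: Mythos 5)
Your argument is correct. Note that the paper does not prove this lemma at all; it is quoted from reference \cite{cai2012global}, so there is no in-paper proof to compare against. Your derivation from Lemma \ref{global forcing} is the natural one: the ``if'' direction via $H=G$ (which is nice since $G-V(G)$ is the empty graph), and the ``only if'' direction by extending a perfect matching of $H-V(C)$ with one of $G-V(H)$ to certify that any nice cycle $C$ of $H$ is also nice in $G$. The only point worth adding is the degenerate case of a nice subgraph $H$ that has no perfect matching (or exactly one): Lemma \ref{global forcing} is stated only for graphs with a perfect matching, but such an $H$ has no nice cycles and at most one perfect matching, so $S|_H$ is a global forcing set of $H$ vacuously by the original definition; with that remark your proof is complete.
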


   \begin{Lem}
        {\rm\cite{cai2012global}}
         \label{GF.T}
         Let $G$ be a connected graph with a perfect matching. Then $S\subseteq E(G)$ is a minimum global forcing set of $G$ if and only if $T=G-S$ is a maximum (spanning) connected subgraph of $G$ without any nice cycle of $G$.
    \end{Lem}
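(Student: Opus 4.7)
The plan is to translate the global forcing condition into a forbidden-substructure condition on the complementary subgraph $T=G-S$ (via Lemma \ref{global forcing}), and then exploit that adding an inter-component edge creates no new cycle.

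First I would observe that by Lemma \ref{global forcing}, $S$ is a global forcing set if and only if $S$ meets every nice cycle of $G$, which is equivalent to saying the spanning subgraph $T=G-S$ contains no nice cycle of $G$. Since $|S|=e(G)-|E(T)|$, minimizing $|S|$ over global forcing sets is the same as maximizing $|E(T)|$ over spanning subgraphs $T$ of $G$ containing no nice cycle. The content of the lemma is therefore that one may restrict this maximization to \emph{connected} spanning subgraphs without nice cycles.

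For the forward direction, assume $S$ is a minimum global forcing set and suppose for contradiction that $T=G-S$ is disconnected. Since $G$ itself is connected, there is an edge $e=uv\in S$ whose endpoints lie in different components of $T$. Adding $e$ to $T$ creates no cycle at all (as $u$ and $v$ lie in different components), so every cycle of $T+e$ is already a cycle of $T$ and hence not a nice cycle. Thus $S\setminus\{e\}$ is still a global forcing set, contradicting the minimality of $S$. Hence $T$ is connected, and by the equivalence above it is a maximum spanning connected subgraph of $G$ without any nice cycle.

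For the converse, let $T$ be a maximum spanning connected subgraph of $G$ without any nice cycle and set $S=E(G)\setminus E(T)$; then $S$ is a global forcing set. If $S$ were not minimum, there would be a global forcing set $S'$ with $|S'|<|S|$, equivalently a spanning subgraph $T'=G-S'$ without any nice cycle and with $|E(T')|>|E(T)|$. If $T'$ is already connected we contradict the maximality of $T$ at once; otherwise, by the same inter-component augmentation as above, we can repeatedly add edges of $S'$ joining distinct components of the current subgraph—none of which introduces any cycle, let alone a nice one—until we reach a connected spanning subgraph $T''$ with no nice cycle and $|E(T'')|\ge |E(T')|>|E(T)|$, again contradicting the maximality of $T$. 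Hence $S$ is a minimum global forcing set.

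The only subtlety is the direction showing connectedness of $T$ for a minimum $S$; everything else is a clean translation through Lemma \ref{global forcing} and a counting observation. The inter-component edge trick is what makes both directions work, and I expect no serious obstacle beyond formulating it cleanly.
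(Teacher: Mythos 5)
The paper does not prove this lemma; it is quoted from \cite{cai2012global} without proof. Your argument is correct and is the standard one: the translation via Lemma \ref{global forcing} identifies global forcing sets with spanning subgraphs avoiding nice cycles, and the inter-component edge trick (an edge joining two components creates no cycle, hence no nice cycle) handles connectivity in both directions, so nothing further is needed.
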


    The lemma gives a characterization for  the complement of a minimum global forcing set $S$ of $G$. The following result shows that the addition of some edges in $G$ to $G-S$ results in a graph with a unique perfect matching.


\begin{Lem}
        \label{G-S+F}
        Let $G$ be a graph with a perfect matching. Then for any minimum global forcing set $S$ of $G$, we can find an edge subset $F\subseteq S$ such that $G-(S\setminus F)$ has a unique perfect matching.
    \end{Lem}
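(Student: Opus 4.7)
The plan is to construct $F$ from a carefully chosen perfect matching of $G$ and let the minimality of $S$ drive both existence and uniqueness. Specifically, among all perfect matchings of $G$, I would pick one $M$ that minimizes $|M\cap S|$, and then take $F:=M\cap S$. Existence is immediate: writing $T:=G-S$, the edges of $M$ lying outside $S$ are in $T$ and those in $S$ are in $F$, so $M\subseteq T\cup F=E(G-(S\setminus F))$, making $M$ a perfect matching of $G-(S\setminus F)$.

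The substance of the argument is uniqueness. Suppose for contradiction that $G-(S\setminus F)$ admits a second perfect matching $M'$. Then $M\oplus M'$ decomposes into disjoint $M$-alternating cycles; pick one such cycle $C\subseteq T\cup F$. Since $M\setminus E(C)$ is a perfect matching of $G-V(C)$, the cycle $C$ is nice in $G$. Setting $M'':=M\oplus E(C)$ gives another perfect matching of $G$, and a short edge-set calculation—using that $E(C)\cap S\subseteq(T\cup F)\cap S=F\subseteq M$—yields $M''\cap S=F\setminus E(C)$, hence $|M''\cap S|=|F|-|F\cap E(C)|$.

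The final step is a dichotomy on whether $E(C)$ meets $F$. If $E(C)\cap F\neq\emptyset$, then $|M''\cap S|<|M\cap S|$, contradicting the minimal choice of $M$. If $E(C)\cap F=\emptyset$, then $E(C)\subseteq T$, so the nice cycle $C$ of $G$ lies entirely in $T$, contradicting Lemma \ref{GF.T}. Either way we arrive at a contradiction, so $M$ is the unique perfect matching of $G-(S\setminus F)$.

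The hard part is purely the uniqueness step; existence falls out of the definition of $F$. The essential insight is this two-case split: the characterization of $T$ in Lemma \ref{GF.T} as a nice-cycle-free spanning subgraph prevents $C$ from being swallowed by $T$, while the minimality of $|M\cap S|$ prevents $C$ from sharing an edge of $F$. The proof hinges on these two constraints jointly forbidding both cases, which is exactly the leverage that ``minimum'' global forcing set gives us.
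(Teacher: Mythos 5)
Your proof is correct and is essentially the paper's argument in dual form: the paper picks a minimum $F\subseteq S$ such that $T+F$ has a perfect matching (and such an $F$ necessarily equals $M\cap S$ for any perfect matching $M$ of $T+F$, so it is the same extremal object as your $F$), and both uniqueness arguments come down to a nice cycle in the symmetric difference of two perfect matchings having to meet $S$, hence $F$, which violates the chosen minimality. One small quibble with your closing remark: the leverage is not the minimality of $S$ but merely that $S$ is a global forcing set, i.e.\ every nice cycle of $G$ meets $S$, which is what rules out your second case.
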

    \begin{proof}
        Let $T=G-S$. Then $T$ is a connected spanning subgraph of $G$, containing no nice cycles of $G$ by Lemma \ref{GF.T}. If $T$ has a perfect matching, then $T$ has exactly one perfect matching and we are done. Otherwise, we can pick two different perfect matchings of $T$, their symmetry difference forms at least one cycle of $T$, a nice cycle in $G$, a contradiction.

        So suppose that $T$ has no perfect matchings. But we can choose a minimum subset $F\subseteq S$ such that  $T'=T+F$  has a perfect matching.
        Next, we will show that $T'$ has exactly one perfect matching. If not, we can get two different perfect matchings of $T'$. Their symmetry difference contains a nice cycle $C$ of $T'$, which is also a nice cycle of $G$. By Lemma \ref{global forcing}, there exists an edge $e$ in $S\cap E(C)$. So $e\notin E(T)$ and $e\in F$.
        Let $F'=F-\left\{e\right\}$. Then $F'$ is smaller than $F$ but $T+F'$ still has a perfect matching, a contradiction.
    \end{proof}

    If $G$ has a 1-degree vertex $u$,   then $u$ is called a \emph{pendant vertex} and $uv$ a \emph{pendant edge}, where $v$ is the neighbor of $u$. The deletion of $u$ and $v$ with their incident edges is called a \emph{leaf matching operation} (simply  \emph{$LM$ operation}). Next, we can see that  $LM$ operation has  no effect on the minimum global forcing sets of a graph.

    \begin{Lem}
        \label{leaf operation}
        Let $G$ be a graph with a perfect matching. If $G'$ is a graph obtained from $G$ by a series of $LM$ operations, then the minimum global forcing sets of $G$ are the same as the ones of $G'$.
    \end{Lem}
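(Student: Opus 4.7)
The plan is to reduce to a single \emph{LM} operation by induction on the number of operations, and then to prove that one such operation preserves the family of nice cycles of the graph (as edge-sets). Given the characterization of global forcing sets via nice cycles (Lemma \ref{global forcing}), this will immediately identify the minimum global forcing sets of $G$ with those of $G'$.

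So suppose $u$ is a pendant vertex of $G$ with neighbor $v$, and let $G' = G - u - v$. First I would verify that every perfect matching $M$ of $G$ contains the pendant edge $uv$, so the map $M \mapsto M \setminus \{uv\}$ is a bijection between perfect matchings of $G$ and perfect matchings of $G'$. The central claim is then:
\begin{equation*}
C \text{ is a nice cycle of } G \iff C \text{ is a nice cycle of } G'.
\end{equation*}
For the forward direction, since $d_G(u)=1$ no cycle of $G$ contains $u$; and if a cycle $C$ of $G$ passed through $v$, then in $G-V(C)$ the vertex $u$ would be isolated, so $G-V(C)$ would have no perfect matching, contradicting niceness. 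Hence every nice cycle $C$ of $G$ avoids both $u$ and $v$, so $C \subseteq G'$, and any perfect matching of $G-V(C)$ must contain $uv$ and therefore restricts to a perfect matching of $G'-V(C)$. The converse direction is easier: given a nice cycle $C$ of $G'$ with a perfect matching $M'$ of $G'-V(C)$, the set $M' \cup \{uv\}$ is a perfect matching of $G-V(C)$.

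Once the correspondence of nice cycles is established, I would finish as follows. By Lemma \ref{global forcing}, a set $S \subseteq E(G)$ is a global forcing set of $G$ iff it meets every nice cycle of $G$. Every edge of $E(G) \setminus E(G')$ (namely $uv$ and the other edges incident to $v$) lies in no nice cycle of $G$, by the claim above. Therefore, if $S$ is a \emph{minimum} global forcing set of $G$, then $S$ contains no such edge (removing any of them would leave a strictly smaller global forcing set), whence $S \subseteq E(G')$; and then $S$ meets every nice cycle of $G$ iff it meets every nice cycle of $G'$, so $S$ is a global forcing set of $G'$ of the same size. Symmetrically, every minimum global forcing set of $G'$ is a global forcing set of $G$ of the same size. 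The two minimum sizes thus agree, and the families of minimum global forcing sets coincide.

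The main obstacle is the sharper half of the nice-cycle correspondence, specifically the observation that a nice cycle of $G$ not only avoids $u$ (which is immediate from $d_G(u)=1$) but also avoids $v$; without this, edges incident to $v$ could in principle contribute to global forcing sets of $G$ that have no counterpart in $G'$. The pendant structure forces $u$ to be isolated whenever $v$ is removed, which is exactly what rules this out and makes the lemma go through.
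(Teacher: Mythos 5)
Your proof is correct and follows essentially the same route as the paper: both establish that the nice cycles of $G$ coincide with those of $G' = G - u - v$ and then invoke Lemma \ref{global forcing}. You simply spell out the details the paper leaves implicit, in particular why a nice cycle of $G$ must avoid $v$ and why a minimum global forcing set cannot use any edge incident to $v$.
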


    \begin{proof}If  $uv$ is a pendant edge of $G$, let $G'=G-\left\{u,v\right\}$.
         Since $uv$ belongs to all perfect matchings of $G'$,  it follows that a cycle of $G$ is nice in  $G$ if and only if it is also nice in $G'$. By Lemma \ref{global forcing}, the minimum global forcing sets of $G$ corresponds to that of $G'$. When we make repeatedly $LM$ operations from $G$, the same result holds always.
    \end{proof}

\section{Bipartite graphs}

 We  consider some relations between the global forcing number and the maximum anti-forcing number of graphs by starting bipartite graphs in this section.  The following structure of a bipartite graph with a unique perfect matching will play an important role.
    \begin{The}
      {\rm\cite{lovasz2009matching}}
      \label{B.U.PM}
      If $G=(U,V)$ is a bipartite graph with a unique perfect matching $M$ and $2n$ vertices, then we can label all vertices in $U$ and $V$
      such that $E(G)\subseteq \left\{u_iv_j|1\leq i \leq j\leq n\right\}$. Hence $G$ has pendant vertices $v_1$ and $u_n$, and $e(G)\leq \frac{1}{2}n(n+1)$, equality holds if and only if $E(G)=\left\{u_iv_j|1\leq i \leq j\leq n\right\}$.
    \end{The}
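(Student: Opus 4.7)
The plan is to induct on $n$. In the base case $n=1$, the graph consists of a single edge $u_1v_1$ and the bound is tight. For $n\ge 2$ the key step is to locate a pendant vertex in the bipartition class $V$, use it as $v_1$, and reduce to a smaller graph.

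To produce the pendant vertex, I would orient each edge of $M$ from $U$ to $V$ and each edge of $E(G)\setminus M$ from $V$ to $U$. Then every $U$-vertex $u$ has out-degree $1$ and in-degree $d_G(u)-1$, while every $V$-vertex $v$ has in-degree $1$ and out-degree $d_G(v)-1$. If $\delta(G)\ge 2$, then every vertex has out-degree at least $1$, so following arrows must eventually revisit a vertex and produce a directed cycle. Such a cycle alternates edges of $M$ and edges not in $M$; swapping on it would yield a second perfect matching, contradicting the uniqueness of $M$. Hence the digraph is acyclic and must possess a sink. No $U$-vertex can be a sink since it has out-degree $1$, so the sink is a $V$-vertex of degree $1$; call it $v_1$ and let $u_1$ be its unique neighbor, which is forced to lie in $M$ and is therefore matched to $v_1$.

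Now I would delete $u_1$ and $v_1$ to obtain $G'$, a bipartite graph on $2(n-1)$ vertices with the unique perfect matching $M\setminus\{u_1v_1\}$. By the induction hypothesis, the vertices of $G'$ can be relabeled as $u_2,\dots,u_n$ and $v_2,\dots,v_n$ so that every edge of $G'$ has the form $u_iv_j$ with $2\le i\le j\le n$. Reattaching $u_1$ and $v_1$, the only additional edges of $G$ are $u_1v_1$ and possibly $u_1v_j$ for $j\ge 2$, all of which respect the constraint $1\le j$. Hence $E(G)\subseteq\{u_iv_j:1\le i\le j\le n\}$, as claimed.

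Under this labeling, any neighbor $v_j$ of $u_n$ must satisfy $n\le j$, so $u_n$ is incident only with $v_n$; similarly $v_1$ is incident only with $u_1$, so both are pendant vertices. Counting admissible pairs $(i,j)$ with $1\le i\le j\le n$ gives $\binom{n+1}{2}=\tfrac{1}{2}n(n+1)$, which is the asserted edge bound, with equality exactly when every such pair is realized as an edge. The most delicate part of the argument is the orientation/alternating-cycle step that produces the pendant vertex in $V$; the rest is a routine induction and a counting exercise.
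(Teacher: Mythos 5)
Your proof is correct. The paper does not prove this statement at all --- it is quoted from Lov\'asz and Plummer's \emph{Matching Theory} --- so there is no internal proof to compare against; your orientation argument (matching edges $U\to V$, non-matching edges $V\to U$, any directed cycle would be $M$-alternating, hence the digraph is acyclic and its sink is a degree-one vertex of $V$) is the standard way to establish the key fact that a bipartite graph with a unique perfect matching has a pendant vertex, and the induction and counting that follow are routine and correctly executed. One cosmetic remark: the opening clause ``If $\delta(G)\ge 2$'' is a detour you do not need, since the observation that every directed cycle is $M$-alternating already gives acyclicity unconditionally, and acyclicity plus the fact that every $U$-vertex has out-degree $1$ is exactly what forces the sink to be a pendant vertex of $V$; stating it that way would avoid the momentary impression that you only ruled out the case $\delta(G)\ge 2$ (which by itself would not tell you which side the pendant vertex lies on).
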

    Combining Theorem \ref{B.U.PM} and Lemma \ref{G-S+F}, we can obtain the following critical result that connects a minimum global forcing set of a bipartite graph $G$ with anti-forcing sets of a perfect matching of $G$.
    \begin{Lem}
        \label{bipartite}
           Let $G$ be a bipartite graph with at least two perfect matchings. For any perfect matching $M$ of $G$, $G$ has  a minimum global forcing set $S_0$ such that  $S_0\setminus M\neq \varnothing$.
    \end{Lem}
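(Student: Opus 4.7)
The plan is to argue by induction on $v(G)$: I take any minimum global forcing set $S$ of $G$, and if $S\setminus M\neq\varnothing$ I set $S_0=S$ and am done, so I may assume $S\subseteq M$ and split into two cases.

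Suppose first that $G$ has a pendant edge $uv$. Then $uv$ lies in every perfect matching of $G$, so $G-\{u,v\}$ is bipartite with at least two perfect matchings and $M\setminus\{uv\}$ is one of them. By Lemma \ref{leaf operation}, the minimum global forcing sets of $G$ and $G-\{u,v\}$ coincide, so the inductive hypothesis applied to $G-\{u,v\}$ and $M\setminus\{uv\}$ produces the desired $S_0$ (noting that $S_0\subseteq E(G-\{u,v\})$ forces $S_0\setminus M=S_0\setminus(M\setminus\{uv\})\neq\varnothing$).

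Otherwise $G$ has no pendant vertex, and this case is handled directly (so it serves as the base of the induction). I use Lemma \ref{G-S+F} to pick $F\subseteq S$ such that $G':=G-(S\setminus F)$ has a unique perfect matching $M'$. Since $G$ has at least two perfect matchings we must have $F\subsetneq S$, so $S\setminus F\neq\varnothing$, and hence $M'\neq M$ because $M\supseteq S\setminus F$ while $M'$ is disjoint from $S\setminus F$. By Theorem \ref{B.U.PM} applied to the bipartite graph $G'$, there is a pendant vertex $v$ of $G'$ with pendant edge $vu\in M'$; all the remaining edges of $G$ incident to $v$ lie in $S\setminus F\subseteq M$. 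Since $v$ is not pendant in $G$ there is at least one such edge, and since $M$ is a matching there is at most one, so $v$ has degree exactly $2$ in $G$, with edges $vu\in M'\setminus M$ and $vw\in S\cap M$.

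I then set $S_0:=(S\setminus\{vw\})\cup\{vu\}$, so that $|S_0|=gf(G)$ and $vu\in S_0\setminus M$; the only remaining task is to verify via Lemma \ref{global forcing} that $S_0$ meets every nice cycle $C'$ of $G$. If $vw\notin E(C')$, then $S\cap E(C')\subseteq S_0$ is nonempty. If $vw\in E(C')$, then $C'$ passes through $v$, and since $v$ has degree $2$ in $G$, the other $C'$-edge at $v$ is forced to be $vu$, giving $vu\in S_0\cap E(C')$. The delicate point of the whole argument is precisely this last configuration: the combination of $S\subseteq M$ with the no-pendant hypothesis on $G$ pins the pendant vertex of $G'$ down to degree $2$ in $G$, and this is exactly what lets the local swap $vw\leftrightarrow vu$ preserve the global forcing property.
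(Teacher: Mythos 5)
Your proof is correct and follows essentially the same route as the paper's: both arguments invoke Lemma \ref{G-S+F} together with Theorem \ref{B.U.PM} to locate a pendant vertex of the unique-perfect-matching subgraph $T+F$, and then perform the same single-edge swap (trading the $M$-edge at that vertex for its pendant edge) to produce a minimum global forcing set meeting $E(G)\setminus M$. The only cosmetic differences are that you dispose of pendant vertices by induction rather than by the paper's $LM$-operation reduction, and you justify the swap via the degree-$2$ structure forced by $S\subseteq M$ rather than via the pendant-vertex argument on $T_0$.
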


    \begin{proof} Let $G'$ be a graph obtained from $G$ by a series of LM operations so that $\delta(G')\geq 2$. Then $M':=M\cap E(G')$ is a perfect matching of $G'$.
        By Lemma \ref{leaf operation}, it suffices to prove that the result holds for $G'$ and $M'$.  Let $S$ be a minimum global forcing set of $G'$. Then $T=G'-S$ is a maximum  subgraph of $G$ without any nice cycle of $G$ by Lemma \ref{GF.T}. We claim that $T$ contains a pendant vertex.
        From Lemma \ref{G-S+F}, we can find an edge subset $F\subseteq S$ such that $T+F$ has a unique perfect matching. By Theorem \ref{B.U.PM}, we know that $T+F$ has a pendant vertex $u$. Hence $u$  is also a pendant vertex of $T$.
        Let $e\in E(T)$ and $e'\in M'$ be the two edges incident with $u$ in $G'$ ($e$ and $e'$ may be the same). Then $T_0=T-e+e'$ also contains no nice cycles of $G'$. Since   $T_0$ has the same size as $T$, by Lemma \ref{GF.T} we have that $S_0=E(G)-E(T_0)$ is a new minimum global forcing set. Since $\delta(G')\geq 2$, $S_0$ has an edge incident with $u$ other than $e'$ in $G'$, so  $S_0\setminus M'\neq \varnothing$.
    \end{proof}

    From the lemma, we can obtain a main result in comparing the global forcing number and the maximum anti-forcing number of a bipartite graph.

    \begin{The}
        \label{Gf>Af}
         Let $G$ be a bipartite graph with a perfect matching. Then $gf(G)\geq Af(G)$.
    \end{The}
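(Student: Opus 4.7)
The plan is to argue by induction on $e(G)$. The base case is when $G$ has a unique perfect matching, in which case $Af(G)=0\le gf(G)$ trivially. So assume $G$ has at least two perfect matchings and fix a perfect matching $M$ of $G$ realizing $af(G,M)=Af(G)$. The engine of the inductive step is Lemma \ref{bipartite}, which furnishes a minimum global forcing set $S_0$ of $G$ containing some edge $e\in S_0\setminus M$. I will show that deleting this single non-matching edge yields the two estimates
\[
gf(G)\ \ge\ gf(G-e)+1\quad\text{and}\quad Af(G-e)+1\ \ge\ Af(G),
\]
which, combined with the inductive hypothesis applied to $G-e$ (still bipartite, still having $M$ as a perfect matching because $e\notin M$), chain together to give $gf(G)\ge gf(G-e)+1\ge Af(G-e)+1\ge Af(G)$.

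For the first estimate I will verify, via Lemma \ref{global forcing}, that $S_0\setminus\{e\}$ is a global forcing set of $G-e$. The key observation is that every nice cycle $C$ of $G-e$ remains a nice cycle of $G$, since any perfect matching of $(G-e)-V(C)$ is still a perfect matching of $G-V(C)$. Thus $S_0\cap E(C)\neq\varnothing$, and since $e\notin E(G-e)\supseteq E(C)$, this intersection lies in $S_0\setminus\{e\}$. For the second estimate, given a minimum anti-forcing set $S'$ of $M$ in $G-e$, I will check that $S'\cup\{e\}$ is an anti-forcing set of $M$ in $G$: it is disjoint from $M$ because $e\notin M$, and
\[
G-(S'\cup\{e\})=(G-e)-S'
\]
has $M$ as its unique perfect matching by the choice of $S'$. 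This gives $Af(G)=af(G,M)\le af(G-e,M)+1\le Af(G-e)+1$.

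The conceptual core of the theorem has in fact already been absorbed into Lemma \ref{bipartite}, which is where the bipartiteness hypothesis is genuinely used (through Theorem \ref{B.U.PM} on pendant vertices in bipartite graphs with a unique perfect matching). The proof of Theorem \ref{Gf>Af} itself is then a short inductive reduction built from the existence of a non-matching edge in some minimum global forcing set, together with the equivalent characterizations of global forcing and anti-forcing sets from Lemmas \ref{global forcing} and \ref{anti-forcing}. The main obstacle one might worry about, namely whether the replacement is compatible with induction, dissolves immediately: $e\notin M$ ensures $M\subseteq E(G-e)$, so $G-e$ inherits both bipartiteness and a perfect matching, and $e(G-e)<e(G)$ so the induction is legitimate.
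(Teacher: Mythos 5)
Your proposal is correct and follows essentially the same route as the paper's proof: induction on $e(G)$, with Lemma \ref{bipartite} supplying an edge $e\in S_0\setminus M$ whose deletion drops $gf$ by at least one while dropping $Af$ by at most one. The only cosmetic differences are that you verify $S_0\setminus\{e\}$ is a global forcing set of $G-e$ directly from Lemma \ref{global forcing} (the paper invokes Lemma \ref{nice subgraph}) and you check the anti-forcing claim from the definition rather than via Lemma \ref{anti-forcing}; both substitutions are valid.
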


    \begin{proof}
        We apply induction on $e(G)$. If $e(G)=\frac{v(G)}{2}$, then $gf(G)=Af(G)=0$ and we are done. Next we consider bipartite  graph $G$ with larger size. Suppose that for any bipartite graph with sizes smaller than $G$, the result holds. If $G$ has a unique perfect matching, then it is trivial as the initial case. Otherwise, by Lemma \ref{bipartite} we have that
        for any perfect matching $M$ of $G$ with $af(G,M)=Af(G)$, $G$ has a minimum global forcing set $S$ such that $S\setminus M\neq \varnothing$.  Take  an edge $e$ in $S\setminus M$, and let $G'=G-e$.

        We claim that  $Af(G')\geq Af(G)-1$.
         Since $M$ is also a perfect matching of $G'$, $Af(G')\geq af(G',M)$. Let $S'$ be a minimum anti-forcing set of $M$ in $G'$.
        Since any $M$-alternating cycle in $G$  either is contained in $G'$ or contains $e$,   $S'\cup \left\{e\right\}$ is an anti-forcing set of $M$ in $G$ by Lemma \ref{anti-forcing}. So
        \begin{displaymath}
            af(G',M)=\left|S'\right|=\left|S'\cup \left\{e\right\}\right|-1\geq af(G,M)-1=Af(G)-1,
        \end{displaymath}
        which implies that $Af(G')\geq af(G',M)\geq Af(G)-1$ and the claim holds.

        From Lemma \ref{nice subgraph}, we know that $S\setminus\{ e\}$ is a global forcing set of $G'$, which implies that $|S\setminus \{ e\}|\geq gf(G')$. By the induction hypothesis, we get that $gf(G')\geq Af(G')$. Thus
        \begin{displaymath}
            gf(G)=|S|=|S\setminus \{ e\}|+1\geq gf(G')+1\geq Af(G')+1\geq Af(G).
        \end{displaymath}
        \end{proof}
        \begin{Cor}
            \label{Af=c}
            If $G$ is a connected graph with $Af(G)=c(G)$, then $gf(G)=c(G)$.
        \end{Cor}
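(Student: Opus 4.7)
The plan is to chain together the three results already available in the excerpt: Theorem \ref{Af(G)<c(G)}, Theorem \ref{gf(G)<c(G)}, and the freshly proved Theorem \ref{Gf>Af}. The proof should take only a couple of lines.

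First I would observe that the hypothesis $Af(G)=c(G)$ forces $G$ to be bipartite. Indeed, Theorem \ref{Af(G)<c(G)} states that $Af(G)\leq c(G)$ in general, but that the inequality is strict whenever $G$ is non-bipartite. So the equality $Af(G)=c(G)$ rules out the non-bipartite case.

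Next, since $G$ is bipartite and has a perfect matching (which is implicit because $Af(G)$ is defined), I would invoke Theorem \ref{Gf>Af} to get $gf(G)\geq Af(G)=c(G)$. Combined with the universal upper bound $gf(G)\leq c(G)$ from Theorem \ref{gf(G)<c(G)}, this forces $gf(G)=c(G)$.

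There is essentially no obstacle here; the whole content is that Theorem \ref{Gf>Af} has already done the heavy lifting, and the bipartiteness is handed to us for free by the strict form of Theorem \ref{Af(G)<c(G)}. If anything, one might want to add a remark that the conclusion in fact recovers the stronger statement from Theorem \ref{gf(G)<c(G)} that every cycle of $G$ is nice, but this is not needed for the corollary itself.
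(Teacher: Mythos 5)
Your proposal is correct and follows exactly the paper's own argument: Theorem \ref{Af(G)<c(G)} forces $G$ to be bipartite, Theorem \ref{Gf>Af} then gives $gf(G)\geq Af(G)=c(G)$, and Theorem \ref{gf(G)<c(G)} supplies the reverse inequality. Nothing is missing.
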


        \begin{proof}
            By Theorem \ref{Af(G)<c(G)}, we know that $G$ is a bipartite graph. So by Theorem \ref{Gf>Af} we have that
            $gf(G)\geq Af(G)=c(G)$. By Theorem \ref{gf(G)<c(G)},
            $gf(G)\leq c(G)$. So $gf(G)=c(G)$.
        \end{proof}
       Next we will fix the order of a graph, but ignore its size to estimate the difference between the global forcing number and the maximum anti-forcing number of bipartite graphs.

    \begin{The}
        \label{B.gf-Af<}
        If $G$ is a connected bipartite graph with a perfect matching and $2n\geq 6$ vertices, then
        \begin{displaymath}
            0\leq gf(G)-Af(G)\leq \frac{1}{2}(n-1)(n-2),
        \end{displaymath}
        and the right equality holds if and only if $G$ is isomorphic to $K_{n,n}$.
    \end{The}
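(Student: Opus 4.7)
The lower bound $gf(G)\ge Af(G)$ is exactly Theorem \ref{Gf>Af}. For the upper bound my plan is to combine two existing estimates. By Theorem \ref{gf(G)<c(G)}, $gf(G)\le c(G)=e(G)-2n+1$. For any perfect matching $M$ of $G$ and any minimum anti-forcing set $S$ of $M$, the bipartite graph $G-S$ has $M$ as its unique perfect matching, so Theorem \ref{B.U.PM} gives $e(G-S)\le \tfrac{n(n+1)}{2}$; hence $af(G,M)=|S|\ge e(G)-\tfrac{n(n+1)}{2}$, and maximising over $M$ yields $Af(G)\ge e(G)-\tfrac{n(n+1)}{2}$. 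Subtracting cancels $e(G)$ and delivers $gf(G)-Af(G)\le \tfrac{n(n+1)}{2}-2n+1=\tfrac{1}{2}(n-1)(n-2)$.

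For the forward direction of equality, I plan to verify the value on $K_{n,n}$ directly. Every cycle of $K_{n,n}$ is nice for $n\ge 3$ (removing a $2k$-cycle leaves $K_{n-k,n-k}$, which carries a perfect matching), so by Theorem \ref{gf(G)<c(G)} we have $gf(K_{n,n})=c(K_{n,n})=(n-1)^2$. For any perfect matching $M$, relabelling the $V$-side so that $M=\{u_iv_i\}$ makes the triangular graph $T_n=\{u_iv_j:i\le j\}$ a spanning subgraph of $K_{n,n}$ with $M$ as its unique perfect matching, so $af(K_{n,n},M)=n^2-\tfrac{n(n+1)}{2}=\binom{n}{2}$ and $Af(K_{n,n})=\binom{n}{2}$. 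Their difference is $(n-1)^2-\binom{n}{2}=\tfrac{1}{2}(n-1)(n-2)$, matching the bound.

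For the converse, suppose $gf(G)-Af(G)=\tfrac{1}{2}(n-1)(n-2)$. Both bounds above must then be simultaneously tight, forcing: (A) every cycle of $G$ is nice (by Theorem \ref{gf(G)<c(G)}), and (B) there exist a perfect matching $M$ and a minimum anti-forcing set $S$ with $|S|=e(G)-\tfrac{n(n+1)}{2}$ and $G-S\cong T_n$ (by Theorem \ref{B.U.PM}); relabelling vertices, I may take $M=\{u_iv_i\}$ and $\{u_iv_j:i\le j\}\subseteq E(G)$. Arguing by contradiction, assume some $u_pv_q$ with $p>q$ is missing from $E(G)$. The plan is to exhibit a cycle $C$ of $G$ with $V(C)=V(G)\setminus\{u_p,v_q\}$; then $G-V(C)=\{u_p,v_q\}$ admits no edge (since $u_pv_q\notin E(G)$) and hence no perfect matching, contradicting the niceness of $C$ guaranteed by (A). The hard part will be producing this Hamiltonian cycle of $G-u_p-v_q$: the triangular subgraph $T_n$ alone supplies one only in the corner case $(p,q)=(n,1)$, via the staircase $u_1v_2u_2v_3\cdots u_{n-1}v_nu_1$. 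For a general missing pair I would choose $(p,q)$ so as to maximise $p-q$ among all missing lower-triangular edges, and then invoke niceness of appropriate 4-cycles inside $T_n$ (whose removal leaves pairs like $\{u_k,v_1\}$ with $k\ge 3$ or $\{u_n,v_l\}$ with $l\le n-2$) to force the presence in $G$ of the near-corner lower-triangular edges needed to reroute the staircase around $u_p$ and $v_q$; making this rerouting case-analysis precise is the main technical obstacle.
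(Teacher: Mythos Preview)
Your derivation of the two inequalities and the forward direction of the equality case is correct and essentially identical to the paper's: both rewrite $gf(G)$ and $Af(G)$ as $e(G)$ minus the size of a suitable extremal subgraph (a spanning tree versus a maximal bipartite graph with a unique perfect matching) and subtract. The only cosmetic difference is that the paper phrases the first bound as $e(T)\ge 2n-1$ rather than quoting $gf(G)\le c(G)$.

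The gap is in the converse. Your plan to pick a missing $u_pv_q$ with $p-q$ maximal and then ``reroute the staircase'' using niceness of selected $4$-cycles is not a complete argument, and your $4$-cycle heuristic is shaky as stated: removing a $4$-cycle from $G$ leaves $2n-4$ vertices, not a single pair, so the inference you sketch does not pin down \emph{which} lower-triangular edges must appear. The paper closes this gap with two ideas you did not use. First, it bootstraps: the staircase cycle immediately forces $u_nv_1\in E(G)$, and with that single extra edge added to $T_n$ one can write down, for every $2\le r<t\le n-1$, an explicit Hamiltonian cycle of $G-u_t-v_r$ using only edges of $T_n\cup\{u_nv_1\}$; this already handles all interior missing pairs. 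Second, for the boundary rows (edges $u_nv_r$ and $u_tv_1$), the paper exploits a consequence of equality that you stated too weakly: equality forces $af(G,M)=e(G)-\tfrac{n(n+1)}{2}$ for \emph{every} perfect matching $M$, not just one. Switching to $M'=(M\setminus\{u_1v_1,u_nv_n\})\cup\{u_1v_n,u_nv_1\}$ and applying Theorem~\ref{B.U.PM} to $T_{M'}$ produces a further vertex of degree $n$ distinct from $u_1,v_n$, which supplies the missing detour to build Hamiltonian cycles avoiding $u_n$ and any $v_r$. Incorporating these two steps would complete your argument along exactly the paper's lines.
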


    \begin{proof}
        By Theorem \ref{Gf>Af}, we directly get the left inequality. So we now consider the right inequality. For any perfect matching $M$ of $G$, we can get a maximum subgraph $T_M$ of $G$ such that $M$ is the unique perfect matching of $T_M$. So
        $af(G,M)=e(G)-e(T_M)$. Let $T$ be a maximum subgraph of $G$ without nice cycles of $G$. Then $gf(G)=e(G)-e(T)$ and $e(T)\geq 2n-1$ by Lemma \ref{GF.T}. By Theorem \ref{B.U.PM}, we have that $e(T_M)\leq \frac{1}{2}n(n+1)$. So
        \begin{eqnarray}
            \label{unequality}
                gf(G)-Af(G) &\leq& gf(G)-af(G,M)
                            = e(T_M)-e(T)\nonumber\\
                            &\leq & \frac{1}{2}n(n+1)-(2n-1)
                            = \frac{1}{2}(n-1)(n-2)\label{2}.
        \end{eqnarray}
       \begin{figure}[H]
            \centering
            \subfigure[$T_M$ and a nice cycle of $G$ missing exactly $u_n$ and $v_1$ for $n=10$.]{\label{f1_a}
                \includegraphics[width=0.7\textwidth]{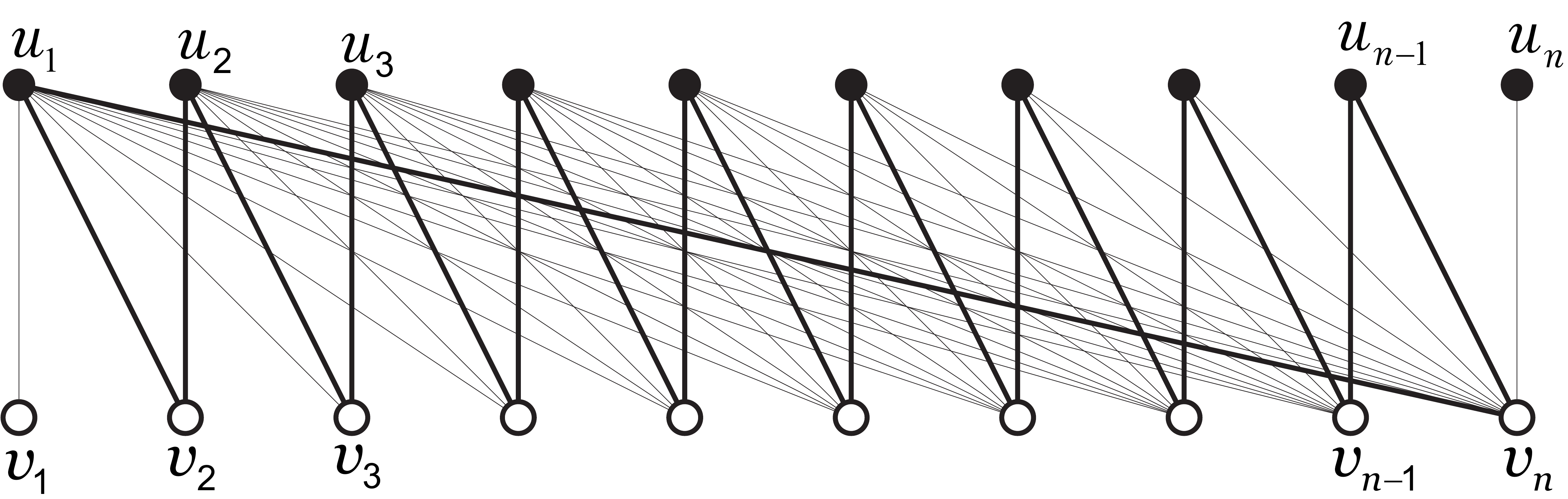}}\\
            \subfigure[A nice cycle of $G$ missing exactly $u_t$ and $v_r$ for $n=10$.]{\label{f1_d}
                \includegraphics[width=0.7\textwidth]{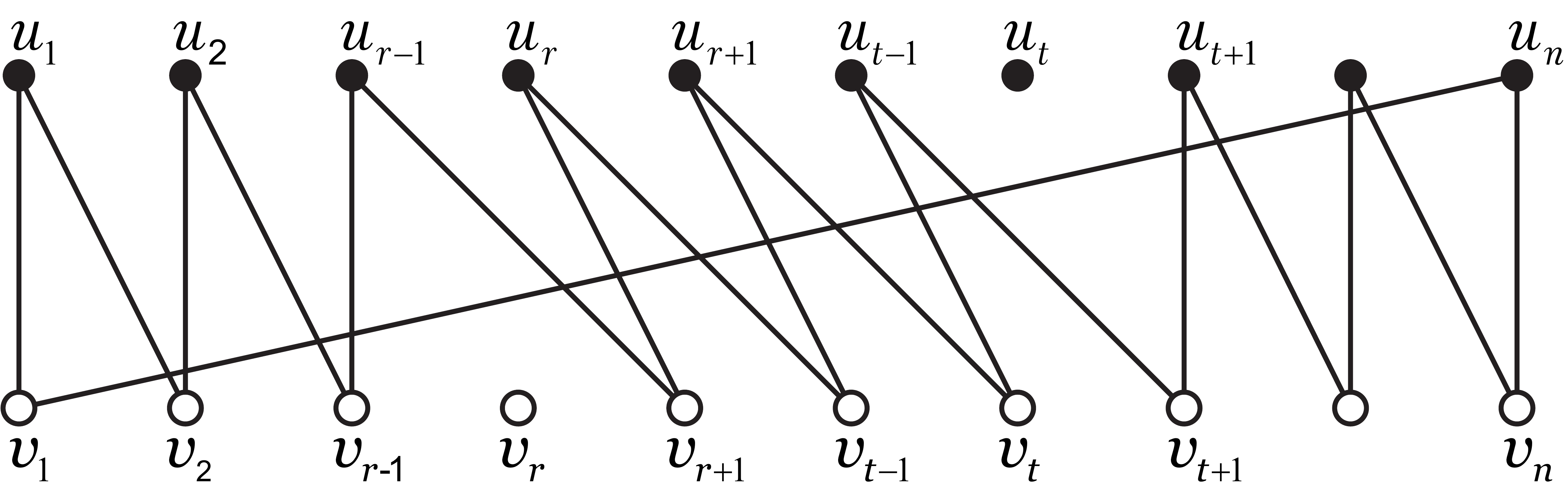}}\\
            \subfigure[A nice cycle of $G$ missing exactly $u_n$ and $v_r$ for $n=10$.]{\label{f1_b}
                \includegraphics[width=0.7\textwidth]{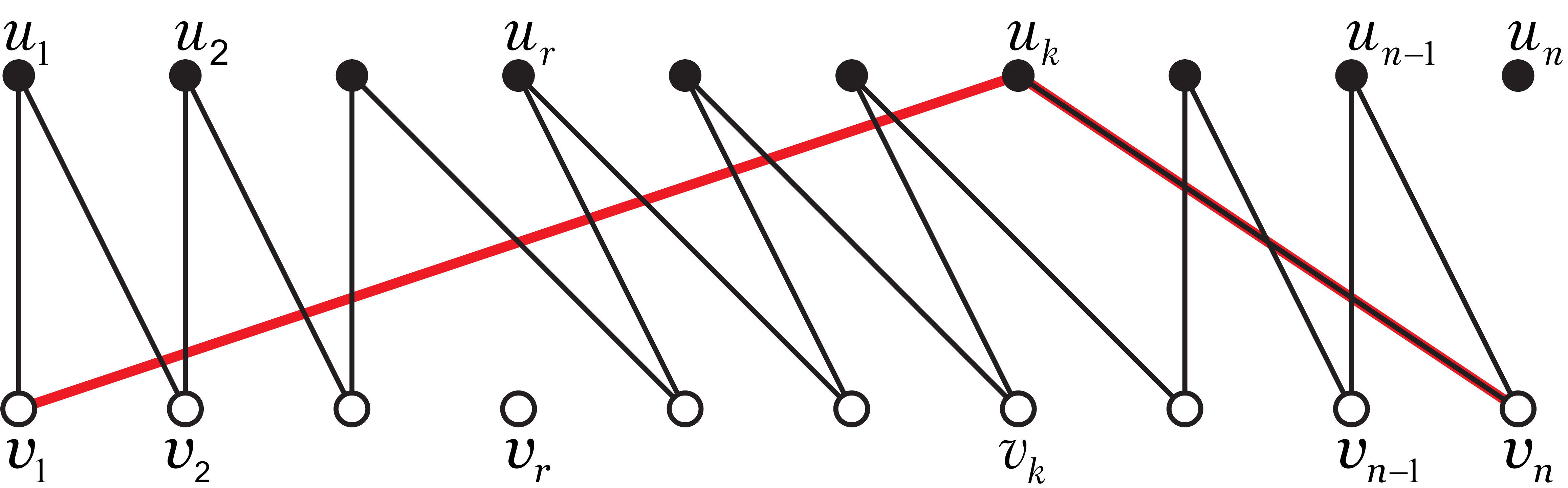}}
            \caption{Illustration for the proof of Theorem \ref{B.gf-Af<}}
        \end{figure}

        If $G$ is isomorphic to $K_{n,n}$, then by Theorem \ref{B.U.PM}, for every perfect matching $M$ of $G$, we have
        $e(T_M)=\frac{1}{2}n(n+1)$. Since every cycle of $G$ is nice, $T$ is a spanning tree of $G$, i.e. $e(T)=2n-1$. Thus $gf(G)-Af(G)=\frac{1}{2}(n-1)(n-2)$.

        Conversely, if $gf(G)-Af(G)=\frac{1}{2}(n-1)(n-2)$, then both equalities in Ineq. (\ref{2}) hold. so we have that $e(T)$ $=2n-1$ and for every perfect matching $M$ of $G$, $e(T_M)=\frac{1}{2}n(n+1)$. Next we want to show that $G$ is isomorphic to $K_{n,n}$, that is, $d_G(u_t)=n$ for each $1\leq t\leq n$. By Theorem \ref{gf(G)<c(G)}, every cycle of $G$ is nice. By Theorem \ref{B.U.PM}, we can label all vertices in $G$ such that $M=\left\{u_iv_i|1\leq i\leq n\right\}$ and $E(T_M)=\left\{u_iv_j|1\leq i\leq j\leq n\right\}$.
        Since $v_2u_2v_3u_3\ldots v_{n-1}u_{n-1}v_nu_1v_2$ is a nice cycle of $G$ missing exactly $u_n$ and $v_1$ (see Fig. \ref{f1_a}), $u_nv_1\in $ $E(G)$.

        For any $2\leq r< t\leq n-1$, $v_1u_1v_2u_2\ldots v_{r-1}u_{r-1}v_{r+1}u_rv_{r+2}u_{r+1}\ldots v_t u_{t-1}v_{t+1}u_{t+1}\ldots v_n$ $u_nv_1$ is a nice cycle of $G$ missing exactly $u_t$ and $v_r$ (see Fig. \ref{f1_d}), which implies that $u_tv_r\in E(G)$ and
        $G-\{u_1,v_1,u_n,v_n\}$ induces a complete bipartite graph. It remains to show that the degrees of $v_1$ and $u_n$ must be $n$.

        We can get a new perfect matching $M'$ by replacing the edges $u_1v_1$, $u_nv_n$ by $u_1v_n$ and $u_nv_1$ in $M$.
        Theorem \ref{B.U.PM} implies that $T_{M'}$ contains an edge not in $M'$ such that its two end vertices have degree $n$. So $G$ contains another $n$-degree vertex except for $u_1$ and $v_n$, say $u_k$.
        Then $u_n$ must have degree $n$, since
        $G$ contains a path from $v_1$ to $v_n$ and missing exactly $u_k$, $u_n$ and $v_r$, and a cycle missing exactly $u_n$ and $v_r$ formed by this path adding $v_1u_kv_n$ (see Fig. \ref{f1_b}) for any $2\leq r\leq n-1$.
    \end{proof}

     Shi et al. \cite{shi2017tight} gave an upper bound of the maximum anti-forcing number of graphs.

    \begin{The}
        {\rm\cite{shi2017tight}}
        \label{Af(G)<}
        Let $G$ be a graph with a perfect matching $M$. Then
        $af(G,M)\leq Af(G)\leq \frac{2e(G)-v(G)}{4}$.
    \end{The}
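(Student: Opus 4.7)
The first inequality $af(G,M) \leq Af(G)$ is immediate from the definition $Af(G) = \max_{M'} af(G, M')$, so the real content lies in proving $Af(G) \leq \frac{2e(G) - v(G)}{4}$. It suffices to produce, for every perfect matching $M$ of $G$, an anti-forcing set of $M$ of size at most $\frac{e(G) - v(G)/2}{2}$, which is exactly half of the number of non-$M$ edges. My plan is to construct such a set $S$ explicitly by an iterative procedure that peels off one $M$-edge at a time, in the spirit of the leaf-matching operation of Lemma \ref{leaf operation}.

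Write $n := v(G)/2$ and initialise $G_0 := G$, $M_0 := M$, $S := \varnothing$. At round $i \in \{0, 1, \ldots, n-1\}$, pick any edge of $M_i$, label its endpoints so that $v_i$ has the smaller degree in $G_i$ (breaking ties arbitrarily) with $m_i$ its $M_i$-mate, add to $S$ all non-$M_i$ edges of $G_i$ incident to $v_i$, and then delete both $v_i$ and $m_i$ to form $G_{i+1}$ with perfect matching $M_{i+1} := M_i \setminus \{v_i m_i\}$. Setting $d_i := d_{G_i}(v_i)$ and $d'_i := d_{G_i}(m_i)$, round $i$ contributes $d_i - 1$ edges to $S$ and removes exactly $d_i + d'_i - 1$ edges from the current graph, so telescoping yields $\sum_{i=0}^{n-1}(d_i + d'_i - 1) = e(G)$, i.e.\ $\sum_i (d_i + d'_i) = e(G) + n$. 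The choice $d_i \leq d'_i$ now gives $2\sum_i d_i \leq e(G) + n$, and since each non-$M$ edge is added to $S$ at most once (once $v_i$ is deleted, no edge incident to it survives into later rounds), we conclude $|S| = \sum_i (d_i - 1) \leq \tfrac{e(G) + n}{2} - n = \tfrac{e(G) - v(G)/2}{2} = \tfrac{2e(G) - v(G)}{4}$.

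The main obstacle is to verify that the set $S$ so constructed is indeed an anti-forcing set of $M$ in $G$. By Lemma \ref{anti-forcing} it suffices to show that $S$ meets every $M$-alternating cycle $C$ of $G$. The key structural observation is that $V(C)$ is closed under the $M$-partner map $v \mapsto m(v)$, because each vertex of $C$ is incident to a unique $M$-edge of $C$ and $M$ is a matching. Let $i^{*}$ be the first round at which some vertex of $V(C)$ is deleted; closure under $m$ forces both $v_{i^{*}}$ and $m_{i^{*}}$ to lie on $C$, and since no earlier round touched $V(C)$, the entire cycle $C$ is still present in $G_{i^{*}}$. The $C$-edge at $v_{i^{*}}$ distinct from the $M$-edge $v_{i^{*}} m_{i^{*}}$ is therefore a non-$M$ edge of $G_{i^{*}}$ incident to $v_{i^{*}}$, so it is placed into $S$ at round $i^{*}$, and $S$ hits $C$. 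Combined with the size bound of the preceding paragraph, this yields $af(G, M) \leq |S| \leq \tfrac{2e(G) - v(G)}{4}$, whence $Af(G) \leq \tfrac{2e(G) - v(G)}{4}$ and the proof concludes.
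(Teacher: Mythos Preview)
Your argument is correct. The peeling construction is sound: at each round the $d_i-1$ edges added to $S$ are genuinely new (they disappear with $v_i$), the telescoping identity $\sum_i(d_i+d'_i-1)=e(G)$ is exact because the graph is simple so $v_im_i$ is the unique common edge, and the minimality choice $d_i\le d'_i$ converts $\sum_i(d_i+d'_i)=e(G)+n$ into the required $|S|\le\tfrac{2e(G)-v(G)}{4}$. The verification that $S$ meets every $M$-alternating cycle is also clean: the crucial point, which you state explicitly, is that the pair $\{v_{i^*},m_{i^*}\}$ deleted at the first round touching $V(C)$ is an $M$-edge (since $M_{i^*}\subseteq M$), so closure of $V(C)$ under the $M$-partner map puts $v_{i^*}$ itself on $C$, and the non-$M$ edge of $C$ at $v_{i^*}$ is then swept into $S$.

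Note, however, that this theorem is quoted in the paper as a citation from \cite{shi2017tight} and carries no proof here, so there is no ``paper's own proof'' to compare against. For context, the argument in \cite{shi2017tight} proceeds via the min--max identity $af(G,M)=c(M)$, where $c(M)$ is the maximum size of a compatible family of $M$-alternating cycles, and then bounds $c(M)$ by a counting argument on how such cycles can share non-$M$ edges around each $M$-edge. Your approach bypasses that machinery entirely: it is a direct greedy construction of a small anti-forcing set, using only Lemma~\ref{anti-forcing}. This is arguably more elementary and makes the connection to the equality characterisation in Theorem~\ref{N.P.M} transparent as well---equality throughout your chain forces $d_i=d'_i$ at every round, which is exactly the ``paired neighbourhood'' condition.
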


    A perfect matching $M$ of $G$ is said to be \emph{nice}, if $M$ satisfies all equalities in Theorem \ref{Af(G)<}.
    A characterization for a nice perfect matching of a graph was given as follows \cite{shi2017tight}.

    \begin{The}
        {\rm\cite{shi2017tight}}
        \label{N.P.M}
        For a perfect matching $M$ of a graph $G$, $M$ is nice if and only if for any two edges
        $e_1=xy$ and $e_2=uv$ in $M$, $xu\in E(G)$ if and only if $yv\in E(G)$, and
        $xv\in E(G)$ if and only if $yu\in E(G)$.
    \end{The}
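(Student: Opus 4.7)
The plan is to translate the combinatorial condition on $M$ into a pairing structure on non-matching edges. Write $N := E(G) \setminus M$, so that the bound target is $\frac{2e(G) - v(G)}{4} = |N|/2$. For any $e = xu \in N$ with $xy, uv \in M$ the matching edges at $x$ and $u$, any $M$-alternating 4-cycle through $e$ must consist of the edges $\{xy, xu, uv, vy\}$ and therefore requires $yv \in E(G)$; conversely when $yv \in E(G)$ such a 4-cycle exists and is unique, since the matching edges at the endpoints of $e$ are uniquely determined. Thus the condition of the theorem is exactly the statement that every non-matching edge lies in a unique $M$-alternating 4-cycle, and in that case $N$ partitions into $|N|/2$ pairs, one per 4-cycle.

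For direction $(\Leftarrow)$, I would use this pairing to lower-bound the anti-forcing number. The $|N|/2$ resulting $M$-alternating 4-cycles are pairwise edge-disjoint on $N$, so by Lemma~\ref{anti-forcing} every anti-forcing set of $M$ must contain at least one non-matching edge from each, giving $af(G, M) \geq |N|/2$. Combined with $af(G, M) \leq Af(G) \leq |N|/2$ from Theorem~\ref{Af(G)<}, equality holds throughout, and $M$ is nice.

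For direction $(\Rightarrow)$, I would proceed by contraposition and show that if the condition fails then $af(G, M) < |N|/2$. Without loss of generality there exist $xy, uv \in M$ with $xu \in E(G)$ and $yv \notin E(G)$, so the non-matching edge $xu$ lies in no $M$-alternating 4-cycle whatsoever (any such 4-cycle is forced through $xy, uv$ and would require $yv$). Assuming $M$ is nice, $|N|$ must be even (else $(2e(G)-v(G))/4$ is non-integral) and $af(G, M) = |N|/2$. Applying Theorem~\ref{Af(G)<} to $G - xu$ yields $af(G - xu, M) \leq (|N| - 1)/2$, which as an integer with $|N|$ even is at most $|N|/2 - 1$. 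Taking a minimum anti-forcing set $S^*$ of $M$ in $G - xu$, the set $S^*$ hits every $M$-alternating cycle of $G$ not through $xu$; the goal is to select $S^*$ so that it also hits every $M$-alternating cycle of $G$ through $xu$ (all of length $\geq 6$, by the 4-cycle observation), producing an anti-forcing set of $M$ in $G$ of size at most $|N|/2 - 1$, a contradiction. The main obstacle is executing this last step: an arbitrary minimum anti-forcing set of $G-xu$ need not meet the long $M$-alternating cycles through $xu$, so a swap argument exploiting the 4-cycle pairing on the remaining non-matching edges is required to steer $S^*$ onto non-matching edges of those long cycles while preserving minimality.
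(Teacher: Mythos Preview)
This theorem is not proved in the present paper; it is quoted from \cite{shi2017tight} and used as a tool, so there is no in-paper proof to compare against. I comment on your argument directly.

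Your translation is correct: with $N=E(G)\setminus M$ one has $(2e(G)-v(G))/4=|N|/2$, and the condition on $M$ says precisely that every edge of $N$ lies in an $M$-alternating $4$-cycle (necessarily unique, since the $M$-edges at its endpoints are determined). Your $(\Leftarrow)$ direction is complete: the resulting $|N|/2$ four-cycles have pairwise disjoint non-matching edge sets, so Lemma~\ref{anti-forcing} gives $af(G,M)\ge |N|/2$, and Theorem~\ref{Af(G)<} gives equality.

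The $(\Rightarrow)$ direction is not finished, and the gap is substantive rather than cosmetic. From $af(G-xu,M)\le |N|/2-1$ you only obtain that $S^\ast\cup\{xu\}$ is an anti-forcing set of $M$ in $G$ of size at most $|N|/2$; this does not contradict $af(G,M)=|N|/2$. What you actually need is an anti-forcing set of $M$ in $G$ of size at most $|N|/2-1$, i.e., that some minimum $S^\ast$ in $G-xu$ already meets every $M$-alternating cycle of $G$ through $xu$. You label this a ``swap argument'' but do not carry it out, and there is no evident mechanism: an arbitrary minimum $S^\ast$ may miss all such long cycles, and exchanging an edge of $S^\ast$ for a non-matching edge on one of them can uncover an $M$-alternating cycle of $G-xu$ that $S^\ast$ previously covered. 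Absent an explicit construction with a proof that the exchange preserves minimality, or an independent structural argument (for instance, showing that $af(G,M)=|N|/2$ forces a family of $|N|/2$ $M$-alternating cycles pairwise disjoint on $N$, hence all of length four, hence covering every edge of $N$), this direction remains a sketch with its decisive step missing.
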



    \begin{Cor}
        \label{gf(G)>n-1}
         Let $G$ be a connected graph with a nice perfect matching and $2n$ vertices. Then $gf(G)\geq n-1$.
    \end{Cor}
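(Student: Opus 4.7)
The plan is to bound $Af(G)$ from below using the nice perfect matching hypothesis together with the cyclomatic-type inequality $Af(G)\le c(G)$, and then apply $gf(G)\ge Af(G)$.

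First, since $M$ is a nice perfect matching, the defining equalities in Theorem \ref{Af(G)<} yield $Af(G) = \frac{2e(G) - v(G)}{4} = \frac{e(G) - n}{2}$, using $v(G)=2n$. Second, Theorem \ref{Af(G)<c(G)} gives $Af(G) \leq c(G) = e(G) - 2n + 1$. Substituting the first expression into the second produces $\frac{e(G) - n}{2} \leq e(G) - 2n + 1$, which rearranges to the size bound $e(G) \geq 3n - 2$. Plugging this back into the formula for $Af(G)$ yields $Af(G) \geq \frac{(3n-2) - n}{2} = n - 1$.

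Finally, Theorem \ref{Gf>Af} gives $gf(G) \geq Af(G)$ in the bipartite setting, so chaining the inequalities produces $gf(G)\ge Af(G)\ge n-1$, which completes the argument.

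The one delicate point I expect concerns the scope of the statement: Theorem \ref{Gf>Af} is established only for bipartite graphs in Section 3, so the argument above is fully complete in the bipartite case, the natural reading given the section on bipartite graphs. For a general connected non-bipartite graph with a nice perfect matching, the final step would require an independent extension of $gf(G)\ge Af(G)$, for instance by verifying that such a graph lies in the class $\mathcal{G}$ introduced later in the paper, or by a separate structural argument exploiting the rigid pairing of non-matching edges forced by Theorem \ref{N.P.M}. Note, moreover, that in the non-bipartite case Theorem \ref{Af(G)<c(G)} gives the strict inequality $Af(G)<c(G)$, which tightens the size bound to $e(G)\ge 3n-1$ and hence $Af(G)\ge n$, leaving only the conversion $gf\ge Af$ to be justified.
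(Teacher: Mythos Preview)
Your argument is valid when $G$ is bipartite, but the corollary is stated---and later applied in the proof of Theorem~\ref{gf-Af(b)}---for arbitrary connected graphs, and there it genuinely breaks down. The graphs $G_k$ of Proposition~\ref{Af-gf=k} have a nice perfect matching, are non-bipartite, and satisfy $gf(G_k)=3k-1<4k-1=Af(G_k)$; hence your chain $gf(G)\ge Af(G)\ge n-1$ fails at the first inequality. Your proposed repairs do not close the gap either: $K_6$, for example, has a nice perfect matching but contains two disjoint triangles covering all six vertices, so $K_6\notin\mathcal{G}$; and your strengthened bound $Af(G)\ge n$ in the non-bipartite case still cannot be converted into a bound on $gf(G)$ without the missing inequality.

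The paper avoids the problem by descending from $G$ to a \emph{minimum} connected spanning subgraph $H$ in which the same matching $M$ is still nice. The pairing condition of Theorem~\ref{N.P.M} forces $e(H)=n+2(n-1)=3n-2$, so $Af(H)=c(H)=n-1$. Theorem~\ref{Af(G)<c(G)} then implies $H$ is bipartite, whence Corollary~\ref{Af=c} gives $gf(H)=n-1$. Finally, since $H$ is spanning it is a nice subgraph of $G$, and Lemma~\ref{nice subgraph} yields $gf(G)\ge gf(H)=n-1$. The missing idea in your attempt is precisely this passage to a minimal subgraph on which bipartiteness becomes automatic, allowing the bipartite theorem to be invoked legitimately.
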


    \begin{proof}
        Let $H$ be a minimum connected spanning subgraph of $G$ with a nice perfect matching. Then by Theorem \ref{N.P.M}, $e(H)=n+2(n-1)=3n-2$, so $c(H)=n-1$. By Theorem \ref{Af(G)<}, $Af(H)=\frac{2e(H)-v(H)}{4}=n-1$. So $Af(H)=c(H)=n-1$.
        According to Corollary \ref{Af=c}, we know $gf(H)=n-1$. By Lemma \ref{nice subgraph}, we get
        $gf(G)\geq gf(H)=n-1$.
    \end{proof}

     In the following, we will give a graph $G$ such that $gf(G)-Af(G)$ is one less than the upper bound in Theorem \ref{B.gf-Af<}, i.e. $gf(G)-Af(G)=\frac{1}{2}(n^2-3n)$. A cycle $C$ in a graph $G$ is  a \emph{Hamilton cycle} if $V(C)=V(G)$.
    \begin{Pro}
        \label{K_n,n-e}
         Let $G$ be a connected bipartite graph with $2n\geq 8$ vertices. If $G$ is isomorphic to $K_{n,n}-e$ for any $e\in E(K_{n,n})$, then $gf(G)=n^2-2n-1$ and $gf(G)-Af(G)=\frac{1}{2}(n^2-3n)$. Conversely,  if $gf(G)=n^2-2n-1$, then $G$ is isomorphic to $K_{n,n}-e$.
    \end{Pro}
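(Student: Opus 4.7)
The plan is to handle the forward and converse implications separately, both relying on Lemma \ref{GF.T} and Theorem \ref{B.U.PM}.

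For the forward direction, fix $G=K_{n,n}-u_1v_1$ and $V'=V(G)\setminus\{u_1,v_1\}$. First I will characterize the non-nice cycles of $G$: for a cycle $C$ of length $2k$, $G-V(C)$ is complete bipartite $K_{n-k,n-k}$ whenever at least one of $u_1,v_1$ belongs to $V(C)$, and equals $K_{n-k,n-k}-u_1v_1$ otherwise. Only the last of these fails to have a perfect matching, and only when $n-k=1$; hence the non-nice cycles of $G$ are exactly the Hamilton cycles of $K_{n-1,n-1}$ on $V'$. Any such Hamilton cycle $C$ together with pendant edges $u_1v_2$ and $u_2v_1$ provides a connected spanning $T$ of size $2n$ whose only cycle is the non-nice $C$. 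For the matching upper bound $e(T)\leq 2n$, if the cyclomatic number of $T$ were at least $2$ then $T[V']$ would contain a Hamilton cycle $C_1$ of $K_{n-1,n-1}$ plus an extra edge acting as a chord, and this chord together with the shorter $C_1$-arc between its endpoints produces a cycle of length at most $n$, which is nice in $G$ for $n\geq 4$ --- a contradiction. Hence $T$ has at most one cycle; an analogous chord argument applied to $u_1$ or $v_1$ shows each has exactly one $T$-edge to $V(C_1)$, so $e(T)=(2n-2)+1+1=2n$ and $gf(G)=e(G)-2n=n^2-2n-1$.

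For $Af(G)$, Theorem \ref{Af(G)<} gives $Af(G)\leq \lfloor(n^2-n-1)/2\rfloor=(n^2-n-2)/2$. Matching this from below, I take $M=\{u_1v_2,u_2v_1\}\cup\{u_iv_i:3\leq i\leq n\}$ and relabel by swapping $u_1\leftrightarrow u_2$, so that $M$ becomes the diagonal matching and the missing edge $u_1v_1$ falls strictly below the diagonal. The upper-triangular subgraph from Theorem \ref{B.U.PM} then lies inside $G$, has $M$ as its unique perfect matching, and contains $\binom{n+1}{2}$ edges, so its complement in $E(G)$ is an anti-forcing set of $M$ of size $(n^2-1)-\binom{n+1}{2}=(n^2-n-2)/2$. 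A direct subtraction gives $gf(G)-Af(G)=(n^2-3n)/2$.

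For the converse, suppose $G$ is connected bipartite on $2n\geq 8$ vertices with a perfect matching and $gf(G)=n^2-2n-1$; the parts then have size $n$ and $e(G)\leq n^2$. Writing $e(G)=gf(G)+e(T_{\max})$ with $e(T_{\max})\geq 2n-1$, I split on whether $G$ has a non-nice cycle. If $G$ has a non-nice cycle $C$, extending $C$ by a tree attached to the vertices outside $C$ gives $e(T_{\max})\geq 2n$ and hence $e(G)\geq n^2-1$; the possibility $e(G)=n^2$ is excluded because $G=K_{n,n}$ has all cycles nice and satisfies $gf(K_{n,n})=(n-1)^2\neq n^2-2n-1$, so $e(G)=n^2-1$ and $G\cong K_{n,n}-e$. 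If instead $G$ has no non-nice cycle, then $T_{\max}$ is a spanning tree, so $gf(G)=c(G)=e(G)-2n+1$ forces $e(G)=n^2-2$ and $G=K_{n,n}-\{e_1,e_2\}$; in this case I will derive a contradiction by exhibiting a non-nice cycle. When $e_1,e_2$ are vertex-disjoint (say $u_1v_1,u_2v_2$), a Hamilton cycle of $K_{n-1,n-1}-u_2v_2$ on $V(G)\setminus\{u_1,v_1\}$ leaves $G-V(C)=\{u_1,v_1\}$ edgeless; when they share a vertex (say $u_1v_1,u_1v_2$), a Hamilton cycle of $K_{n-2,n-2}$ on $(\{u_2,\ldots,u_n\}\setminus\{u_k\})\cup\{v_3,\ldots,v_n\}$ isolates $u_1$ in $G-V(C)$. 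Both Hamilton cycles exist precisely because $n\geq 4$, which is where the hypothesis $2n\geq 8$ is used.

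The main delicate step is this last case analysis in the converse, where the two configurations of the missing edges demand separate treatments and essential use of Hamiltonicity of $K_{n-1,n-1}-e$ and $K_{n-2,n-2}$; everything else follows mechanically from Lemma \ref{GF.T}, Theorem \ref{B.U.PM}, and the non-nice cycle characterization established in the forward direction.
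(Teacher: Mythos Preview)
Your argument tracks the paper's closely: both characterize the non-nice cycles of $K_{n,n}-e$ as exactly the Hamilton cycles of $K_{n-1,n-1}$ on $V(G)\setminus\{u_1,v_1\}$, deduce $e(T_{\max})=2n$ via a chord/short-cycle argument (the paper phrases it as ``two Hamilton cycles of $G-u-v$ together contain a shorter cycle''), and for the converse reduce to $e(G)\in\{n^2-2,n^2-1,n^2\}$ and eliminate the extreme values by exhibiting non-nice cycles in the two configurations of missing edges. A small simplification: once you have established that $T$ has at most one cycle, i.e.\ $c(T)\le 1$, the bound $e(T)=c(T)+2n-1\le 2n$ is immediate, so the separate chord argument for $u_1$ and $v_1$ is unnecessary.

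The one genuine issue is in your lower bound for $Af(G)$. Exhibiting an anti-forcing set of $M$ of size $\frac{n^2-n-2}{2}$ only shows $af(G,M)\le\frac{n^2-n-2}{2}$; this does not bound $Af(G)=\max_M af(G,M)$ from below. What you actually need from Theorem~\ref{B.U.PM} is its edge bound: any bipartite subgraph of $G$ with a unique perfect matching has at most $\binom{n+1}{2}$ edges, so $e(T_M)\le\binom{n+1}{2}$ and hence $af(G,M)=e(G)-e(T_M)\ge\frac{n^2-n-2}{2}$ for \emph{every} perfect matching $M$; the particular choice of $M$ and the explicit upper-triangular construction are then irrelevant for this direction. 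The paper obtains the same lower bound by a different route: since $G\not\cong K_{n,n}$, Theorem~\ref{B.gf-Af<} gives the strict inequality $gf(G)-Af(G)<\frac12(n-1)(n-2)$, hence (by integrality) $Af(G)\ge gf(G)-\frac12(n-1)(n-2)+1=\frac12(n-2)(n+1)$.
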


    \begin{proof}
          If $G$ is isomorphic to $K_{n,n}-e$ for any $e\in E(K_{n,n})$, let  $u$ and $v$ be nonadjacent vertices in different partite sets of $G$. Then $G-u-v$ is isomorphic to $K_{n-1,n-1}$. We claim that for any cycle $C$ in $G$, it is nice in $G$ if and only if it is not a Hamilton cycle of $G-u-v$. If $C$ is  a Hamilton cycle of $G-u-v$, it is not nice in $G$.  If either $u$ or $v$ belongs to $C$, then $G-C$ is either a null graph or a complete bipartite graph, so $C$ is a nice cycle of $G$. Otherwise, neither $u$ nor $v$ are in $C$, and  $G-C$ can be obtained from a complete bipartite with order at least $4$ by deleting an edge. Hence $G-C$ has a perfect matching, i.e. $C$ is a nice cycle of $G$, and the claim is verified.  Let $T$ be a maximum subgraph of $G$ without any nice cycle of $G$. Since  two Hamilton cycles of $G-u-v$ together contain a shorter cycle, which is a nice cycle of $G$, there is exactly one cycle in $T$, so $e(T)=2n$.
        By Lemma \ref{GF.T}, $gf(G)=e(G)-e(T)=(n^2-1)-2n=n^2-2n-1$.

        From Theorem \ref{B.gf-Af<}, we know that
        $Af(G)\geq gf(G)-\left[\frac{1}{2}(n^2-3n+2)-1\right]=\frac{1}{2}(n-2)(n+1)$.
        By Theorem \ref{Af(G)<}, we get that
        $Af(G)\leq \lfloor\frac{2e(G)-v(G)}{4}\rfloor=\lfloor\frac{n^2-n-1}{2}\rfloor=\frac{1}{2}(n-2)(n+1)$.
        So $Af(G)=\frac{1}{2}(n-2)(n+1)$ and $gf(G)-Af(G)=\frac{1}{2}(n^2-3n)$.

         Conversely, if $gf(G)=n^2-2n-1$, then $e(G)\geq gf(G)+v(G)-1=(n^2-2n-1)+2n-1=n^2-2$ by Theorem \ref{gf(G)<c(G)}. Because $G$ is bipartite, $e(G)\leq n^2$. If $e(G)=n^2$, then $G$ is isomorphic to $K_{n,n}$. By Theorem \ref{B.gf-Af<}, $gf(K_{n,n})=n^2-2n+1$, a contradiction.
        If $e(G)=n^2-2$, then $G\cong K_{n,n}-\left\{e,e'\right\}$ for distinct $e,e'\in E(K_{n,n})$. Since $gf(G)=c(G)$, by Theorem \ref{gf(G)<c(G)}, all cycles in $G$ are nice. If $e$ and $e'$ are adjacent, then $G-V(e)$ has a Hamilton cycle, not a nice cycle of $G$, a contradiction. So $e$ and $e'$ are not adjacent.

        Let $A=\left\{u_1,u_2,\ldots,u_n\right\}$ and $B=\left\{v_1,v_2,\ldots,v_n\right\}$ be a bipartite sets of $V(G)$.
        We can assume $e=u_1v_1$ and $e'=u_2v_2$. Let $P_1=u_4v_2u_3v_3u_2v_4$ and $P_2=u_4v_5u_5v_6\ldots u_{n-1}v_nu_nv_4$
        (see Fig. \ref{f2}). Obviously, $P_1\cup P_2$ is a cycle of $G$ missing exactly  $u_1$ and $v_1$, which is not a nice cycle of $G$, a contradiction. Thus, $e(G)=n^2-1$.
    \end{proof}
\begin{figure}[H]
            \centering
            \includegraphics[width=0.6\textwidth]{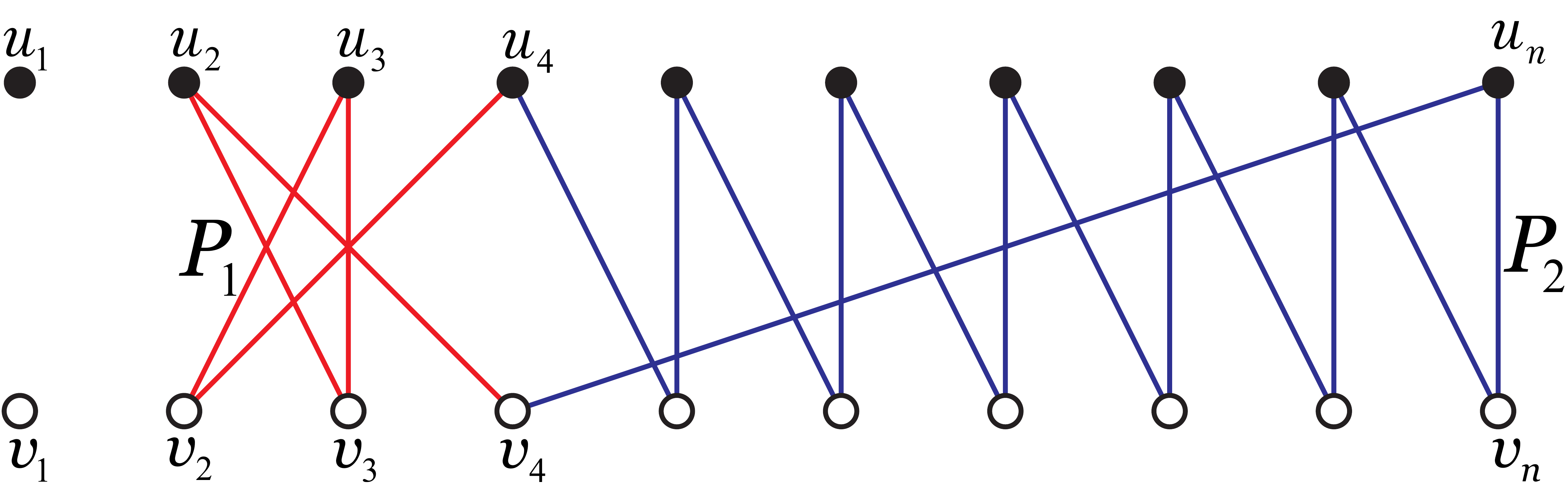}
            \caption{\label{f2}A cycle in $G$ missing $e$ and $e'$ for $n=10$}
        \end{figure}
   \section{Generalizations of Theorem \ref{Gf>Af}}
      A connected graph $G$ is called \emph{matching covered}, if every edge of $G$ belongs to a perfect matching of $G$.
      For a graph $G$, $\mathbb{R}^{E(G)}$ denotes the set of all vectors whose entries are indexed by the edges of $G$.
      For any perfect matching $M$ of $G$, the incidence vector of $M$ is denoted by $q^M$. The perfect matching polytope of $G$ is the convex hull of $\left\{q^M: M\in \mathcal{M}\right\}$, denoted by $PM(G)$. In a landmark paper, Edmonds \cite{edmonds1965maximum} gave a linear inequality description of $PM(G)$. For any $\boldsymbol{x}\in \mathbb{R}^{E(G)}$ and $F\subseteq E(G)$, let $\boldsymbol{x}(F)=\sum\limits_{e\in F}\boldsymbol{x}(e)$. For a nonempty subset $U\subset V(G)$, $\partial_G(U)$ represents for a {\em cut}, the set of  edges which have exactly one end vertex in $U$. If $U$ is a singleton, then $\partial_G(U)$ is a {\em trivial} cut.
      \begin{The}
         {\rm\cite{edmonds1965maximum}}
        \label{polytope definition}
        A vector $\boldsymbol{x}$ in $\mathbb{R}^{E(G)}$ belongs to $PM(G)$ if and only if it satisfies the following system of linear inequalities:
        \begin{eqnarray}
            \label{polytope character}
                 \boldsymbol{x} &\geq & 0\ \ \ \ \ \ \ \ \ \ \ \ \ \ \ \ \ \ \ \ \ \ \ \ \ \ \ \ \ \ \ \ \ \ (non-negativeity), \nonumber\\
                \boldsymbol{x}(\partial_G(v)) &=& 1\ \ \ for\ all\ v\in V(G) \ \ \ \ \ \ \ \ (degree\ constraints),\nonumber \\
                \boldsymbol{x}(\partial_G(S))&\geq & 1\ \ \ for\ all\ odd\ S\subset V(G) \ \ (odd\ set\ constraints).\nonumber
        \end{eqnarray}
        \end{The}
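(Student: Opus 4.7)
The plan is to establish both directions of Edmonds' characterization separately, treating the ``only if'' direction as routine and doing the real work on the ``if'' direction by induction on the size of $G$.

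For necessity, I would verify that every perfect matching's incidence vector $q^M$ satisfies the three families of inequalities. Non-negativity and $q^M(\partial_G(v))=1$ for every $v\in V(G)$ are immediate because each vertex is incident to exactly one edge of $M$. For any odd subset $S\subset V(G)$, the edges of $M$ with both ends in $S$ cover an even number of vertices, so at least one vertex of $S$ must be matched to a vertex outside $S$; hence $q^M(\partial_G(S))\geq 1$. Since the set defined by the linear inequalities is convex, this already gives $PM(G)\subseteq P(G)$, where $P(G)$ denotes the polytope on the right-hand side.

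For sufficiency I would argue by contradiction. Choose a counterexample $G$ minimizing $|V(G)|+|E(G)|$, and pick an extreme point $x^{*}$ of $P(G)$ that does not lie in $PM(G)$. A first reduction shows that no coordinate of $x^{*}$ equals $0$ or $1$: an edge with $x^{*}_e=0$ can be deleted to produce a strictly smaller counterexample, while $x^{*}_e=1$ forces all other edges incident to the endpoints of $e$ to have value $0$, so deleting $e$ together with its endpoints again contradicts minimality. Hence $0<x^{*}_e<1$ for every $e\in E(G)$, and $\delta(G)\geq 2$, so $|E(G)|\geq |V(G)|$. Because $x^{*}$ is a vertex of $P(G)$, it is the unique solution of $|E(G)|$ linearly independent tight constraints; no non-negativity constraint is tight, so the $|V(G)|$ degree constraints cannot by themselves pin down $x^{*}$, and there must exist an odd set $S$ with $3\leq |S|\leq |V(G)|-3$ for which the cut inequality is tight, i.e.\ $x^{*}(\partial_G(S))=1$.

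The heart of the argument is a contraction step. Form $G_1:=G/(V(G)\setminus S)$ by shrinking $V(G)\setminus S$ to a single pseudo-vertex, and $G_2:=G/S$ analogously. The restrictions $x^{*}|_{E(G_i)}$ satisfy all of Edmonds' inequalities on $G_i$: tightness on $\partial_G(S)$ supplies the new degree constraint at each pseudo-vertex, the old degree constraints on the unshrunk side are inherited, and every odd-set constraint on $G_i$ pulls back to an odd-set constraint on $G$ (after, if necessary, replacing the set by its complement, using that $|V(G)|$ is even). By the minimality hypothesis, each $x^{*}|_{E(G_i)}=\sum_k \alpha^{(i)}_k\, q^{M^{(i)}_k}$ is a convex combination of incidence vectors of perfect matchings $M^{(i)}_k$ of $G_i$, and every such $M^{(i)}_k$ uses exactly one edge of $\partial_G(S)$.

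The main obstacle is the final splicing step: combining the two decompositions into a single convex decomposition of $x^{*}$ over perfect matchings of $G$. The two marginal distributions induced on $\partial_G(S)$ agree, both equal to $x^{*}|_{\partial_G(S)}$, so one can solve the resulting transportation problem to obtain a joint distribution on pairs $(M^{(1)}_k,M^{(2)}_\ell)$ that share their crossing edge in $\partial_G(S)$. For any such pair, $M^{(1)}_k\cup M^{(2)}_\ell$ is a perfect matching of $G$, and summing with the joint weights yields $x^{*}\in PM(G)$, contradicting the choice of $x^{*}$. The delicate point is checking that this splicing is actually feasible and that no Edmonds inequality has been lost when passing from $G$ to $G_1$ and $G_2$; the odd-set pullback together with the parity bookkeeping inside $S$ and its complement is where I expect to have to be most careful.
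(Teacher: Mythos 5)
The paper does not actually prove this statement: it is quoted as Edmonds' characterization of the perfect matching polytope with a citation to \cite{edmonds1965maximum}, and is used later only as a black box (in the proof of Lemma \ref{polytope}). So there is no in-paper proof to compare yours against. On its own merits, your sketch is the standard ``minimal counterexample plus tight-cut contraction'' proof (as in Lov\'asz--Plummer and Schrijver), and its architecture is sound: necessity via the parity argument, the reductions eliminating coordinates equal to $0$ or $1$, the extraction of a tight odd cut with $3\leq |S|\leq v(G)-3$, the two contractions, and the splice across $\partial_G(S)$. Two spots deserve the care you already anticipate. First, the existence of the tight odd cut is not purely a matter of counting tight constraints: if $e(G)=v(G)$ then $G$ is $2$-regular, hence a disjoint union of cycles, and this case must be dispatched separately (an odd cycle component violates its own odd-set constraint because its cut is empty, while on even cycle components the degree constraints are linearly dependent, so $x^{*}$ could not be a vertex). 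Second, the transportation step should be made explicit; the cleanest routes are either to clear denominators, writing $Kx^{*}|_{E(G_i)}$ as a sum of $K$ incidence vectors and pairing off matchings of $G_1$ and $G_2$ that cross $\partial_G(S)$ at the same edge, or to use the product coupling conditioned on the crossing edge, giving the pair $(M^{(1)}_k,M^{(2)}_\ell)$ with common crossing edge $e$ the weight $\alpha^{(1)}_k\alpha^{(2)}_\ell/x^{*}_e$. Either way one checks directly that the resulting convex combination of perfect matchings of $G$ reproduces $x^{*}$, which yields the desired contradiction.
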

        A vector $\boldsymbol{x}$ in $\mathbb{R}^{E(G)}$ is \emph{1-regular} if $\boldsymbol{x}(\partial_G(v))=1$, for all $v\in V(G)$.
        For a cut $C:=\partial_G{(X)}$ of $G$, we denote the graph obtained from $G$ by shrinking the shore $\overline{X}=V(G)-X$ to a vertex by $G\left\{X\right\}$. We refer to $G\left\{X\right\}$ and $G\left\{\overline{X}\right\}$ as the \emph{$C$-contractions} of $G$. A cut $C$ of a matching covered graph $G$ is a \emph{separating cut} if both of the $C$-contractions of $G$ are also matching covered, and is \emph{tight} if $|C\cap M|=1$ for all
        $M\in \mathcal{M}$. Every tight cut is separating, but the converse is not true. A matching covered graph is \emph{solid} if all separating cuts of it are tight. A \emph{brick} is a non-bipartite matching covered graph satisfying that all tight cuts of it are trivial.

        \begin{The}
           {\rm\cite{de2004perfect}}
           \label{solid}
             For a brick $G$, the following three statements are equivalent.\\
           (i) $G$ is solid,\\
            (ii) $G\in \mathcal{G}$, and\\
            (iii) $PM(G)$ consists  of non-negative $1$-regular vectors.
        \end{The}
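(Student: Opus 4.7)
The plan is to prove the three-way equivalence by a cycle of implications, $(iii)\Rightarrow(ii)\Rightarrow(i)\Rightarrow(iii)$, with Edmonds' polytope description (Theorem~\ref{polytope definition}) as the main tool and the tight cut decomposition of a brick supplying the structural leverage when needed.

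The cleanest link is $(iii)\Rightarrow(ii)$, and I would argue its contrapositive by exhibiting an explicit non-negative $1$-regular vector outside $PM(G)$ whenever $G$ contains disjoint odd cycles $C_1,C_2$ with $G-V(C_1)-V(C_2)$ having a perfect matching $N$. Define $\boldsymbol{x}(e)=\tfrac12$ for $e\in E(C_1)\cup E(C_2)$, $\boldsymbol{x}(e)=1$ for $e\in N$, and $\boldsymbol{x}(e)=0$ otherwise. A vertex-by-vertex check shows $\boldsymbol{x}$ is non-negative and $1$-regular, while taking the odd set $S=V(C_1)$ yields $\boldsymbol{x}(\partial_G(S))=|V(C_1)|-2\boldsymbol{x}(E(G[S]))\leq |V(C_1)|-|V(C_1)|=0<1$, violating an odd set constraint; hence $\boldsymbol{x}\notin PM(G)$, contradicting $(iii)$.

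For $(ii)\Rightarrow(i)$ I would again work by contrapositive: if the brick $G$ is not solid, it admits a non-trivial separating cut $C=\partial_G(X)$ that is not tight, so $|X|$ is odd and some perfect matching $M$ satisfies $|M\cap C|\geq 3$. Since both contractions $G\{X\}$ and $G\{\overline{X}\}$ are matching covered, one can splice perfect matchings of them that share a single chosen $C$-edge to build a perfect matching $M^1$ of $G$ with $|M^1\cap C|=1$. The symmetric difference $M\oplus M^1$ then decomposes into $M$-alternating cycles that collectively contain an even positive number of $C$-edges, from which one extracts two disjoint odd closed trails --- straightened into odd cycles by shortcutting --- such that the remainder of $M$ provides a perfect matching of the complement, contradicting $G\in\mathcal{G}$.

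The hardest step is $(i)\Rightarrow(iii)$: starting from any non-negative $1$-regular $\boldsymbol{x}$ on $G$, one must certify every odd set inequality, equivalently $\boldsymbol{x}(E(G[S]))\leq (|S|-1)/2$ for every odd $S\subset V(G)$. My plan is to induct along the brick's ear decomposition, verifying that adding a single or double ear preserves the ``every non-negative $1$-regular vector lies in $PM$'' property, and invoking solidity (the absence of non-trivial separating cuts) to preclude the one structural obstruction in which the added ear could create a violated odd set constraint by introducing fractional mass on both sides of a previously critical non-separating cut. The main obstacle is precisely this step: the fractional part of $\boldsymbol{x}$ can spread non-locally, so one must convert a hypothetical violated odd set inequality into a non-trivial separating cut structure that solidity forbids, and making this conversion rigorous requires a genuine global structural argument rather than a local cycle-by-cycle check.
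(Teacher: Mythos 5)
First, a point of comparison: the paper does not prove Theorem \ref{solid} at all --- it is quoted from de Carvalho, Lucchesi and Murty \cite{de2004perfect}, and the paper explicitly attributes the equivalence of (i) and (ii) to Reed and Wakabayashi. The only fragment the paper proves itself is the general-graph implication (iii)$\Rightarrow$(ii), as Lemma \ref{polytope}, and your first paragraph reproduces that argument correctly: the fractional vector with value $\tfrac12$ on the two odd cycles and $1$ on the complementary matching is non-negative and $1$-regular, and your computation $\boldsymbol{x}(\partial_G(V(C_1)))=|V(C_1)|-2\boldsymbol{x}(E(G[V(C_1)]))\leq 0$ is the same certificate the paper uses. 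For that implication you agree with the paper, and bricks play no role in it.

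The other two implications contain genuine gaps. For (ii)$\Rightarrow$(i) you form $M\oplus M^1$ and propose to ``extract two disjoint odd closed trails, straightened into odd cycles by shortcutting.'' But $M\oplus M^1$ decomposes into vertex-disjoint alternating cycles, all of which are \emph{even}; there are no odd closed trails sitting in this symmetric difference waiting to be extracted. Producing two vertex-disjoint odd cycles whose removal leaves a perfectly matchable graph from a non-tight separating cut is precisely the content of the Reed--Wakabayashi argument: it uses the full strength of $C$ being separating (a perfect matching through \emph{every} edge of $G$ meeting $C$ exactly once), not just one auxiliary matching $M^1$ with $|M^1\cap C|=1$, and even with that the construction is delicate. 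Moreover, ``shortcutting'' a closed trail into a cycle discards vertices and edges, which in general destroys the property that the remainder still has a perfect matching, so the configuration you would obtain need not witness $G\notin\mathcal{G}$. For (i)$\Rightarrow$(iii) you state a plan (induction along an ear decomposition, plus converting a hypothetical violated odd-set inequality into a non-trivial separating cut) and then concede that the key conversion ``requires a genuine global structural argument'' which you do not supply; that conversion \emph{is} the theorem, so this direction is not proved. As it stands, the proposal establishes only (iii)$\Rightarrow$(ii), which is the easy implication and the one the paper itself proves (for all graphs, not just bricks) as Lemma \ref{polytope}.
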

       de Carvalho et al. \cite{de2004perfect} pointed out that in 2003 Reed and Wakabayashi had already showed that (i) and (ii) are equivalent in Theorem \ref{solid}, and gave a new proof to it and proved the equivalence of (i) and (iii).

       In 1959, Kotzig \cite{Kotzig1959linear} showed the following property of the graphs with a unique perfect matching.

      \begin{The}
       {\rm\cite{Kotzig1959linear}}
        \label{cutline}
        If $G$ is a connected graph with a unique perfect matching, then $G$ has a cut-edge belonging to the perfect matching.
      \end{The}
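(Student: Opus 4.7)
The plan is to proceed by strong induction on $|V(G)|$. The base case $|V(G)|=2$ is immediate: $G=K_2$, and its unique edge is both in $M$ and is a cut-edge.

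For the inductive step with $|V(G)|\geq 4$, I would first handle any cut-edge $e$ of $G$ (if one exists). If $e\in M$, we are done. If $e\notin M$, then $e$ separates $G$ into two smaller connected subgraphs $G_1$ and $G_2$, and since $e\notin M$, each restriction $M|_{G_i}$ is a perfect matching of $G_i$. Uniqueness of $M$ transfers to each $G_i$, since a second perfect matching of $G_i$ combined with $M|_{G_{3-i}}$ would give a second perfect matching of $G$. By the induction hypothesis, each $G_i$ has a cut-edge in $M|_{G_i}$, which remains a cut-edge of $G$ lying in $M$.

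It remains to handle the case where $G$ has no cut-edge at all, i.e., $G$ is $2$-edge-connected. I argue by contradiction: assume such a $G$ exists with unique perfect matching $M$. Starting from any $M$-edge $x_0x_1$, I would construct an alternating walk $x_0, x_1, x_2, \ldots$ greedily as follows: for each $M$-edge $x_{2i}x_{2i+1}$ just traversed, the $2$-edge-connectivity (which forces every vertex to have degree $\geq 2$ since $|V(G)|\geq 4$) guarantees that $x_{2i+1}$ has a neighbor $x_{2i+2}\neq x_{2i}$, giving a non-$M$ edge, after which $x_{2i+3}$ is forced to be the unique $M$-partner of $x_{2i+2}$. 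Finiteness of $G$ forces a revisited vertex; take the first such repetition $x_k=x_j$. If $k-j$ is even, then $x_jx_{j+1}\cdots x_k$ is an $M$-alternating cycle, so its symmetric difference with $M$ is a second perfect matching of $G$, contradicting uniqueness.

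The main obstacle is the case $k-j$ odd, which produces an odd closed alternating walk and can genuinely arise in non-bipartite graphs. My strategy for overcoming this is to refine the walk construction into an alternating BFS-style tree rooted at $x_0$, where depth parity encodes whether the last edge traversed lies in $M$; two distinct alternating paths from $x_0$ to a common vertex must then agree in depth parity, forcing any first repetition to close up into an even-length $M$-alternating cycle. Alternatively, an odd closure can be re-routed through the $M$-partner of the junction vertex along an already-constructed branch, yielding a shorter even $M$-alternating cycle within the tree. This parity-chasing analysis in the non-bipartite setting is the technical crux of the argument.
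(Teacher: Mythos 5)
The paper itself gives no proof of this theorem (it is quoted from Kotzig's 1959 paper), so your proposal can only be judged on its own terms. The reduction to the $2$-edge-connected case is correct and cleanly argued: a cut-edge outside $M$ splits $G$ into two connected pieces, each inheriting a unique perfect matching, and a cut-edge of a piece is a cut-edge of $G$. The even-closure analysis of the alternating walk is also correct: a first repetition $x_k=x_j$ with $k-j$ even yields a genuine $M$-alternating cycle, hence a second perfect matching.

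The genuine gap is the odd-closure case, which you yourself flag as ``the technical crux'' and then only gesture at. Neither of your two sketched repairs is an argument. The first rests on the claim that two alternating paths from $x_0$ to a common vertex must agree in depth parity; this is exactly what fails in non-bipartite graphs (an odd closure \emph{is} a vertex reached with both parities --- a blossom), so the claim cannot be assumed and would have to be deduced from the absence of $M$-alternating cycles together with $2$-edge-connectivity, which is essentially the theorem itself. The second (``re-routing through the $M$-partner of the junction vertex'') is too vague to check and does not obviously produce an even alternating cycle: in the odd closure one necessarily has $j$ odd and $k$ even, so the closing edge and the opening edge at $x_j$ are both outside $M$, the odd cycle $x_jx_{j+1}\cdots x_{k-1}x_j$ is a blossom whose only unmatched-within-the-cycle vertex $x_j$ is matched along the stem, and no alternating cycle need sit inside this configuration (the triangle with a pendant matching edge realizes it with a unique perfect matching). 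A workable completion would instead contract the blossom and apply induction --- checking that uniqueness of the perfect matching lifts (an odd cycle minus a vertex has a unique perfect matching) and that a cut-edge of the contraction pulls back to a cut-edge of $G$ in $M$ --- but that is a substantially different argument from anything you have written. As it stands, the proof is incomplete at precisely the point where the non-bipartite difficulty lives.
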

       For the graphs $G$  with a unique perfect matching,  we come to a deeper structure property: $G$ contains either a pendant vertex as Theorem \ref{B.U.PM} in the bipartite case or an {\em odd dumbbell} (see Fig. \ref{f3}): two disjoint odd cycles connected by one path of odd length.
       \begin{figure}[H]
            \centering
            \includegraphics[width=0.5\textwidth]{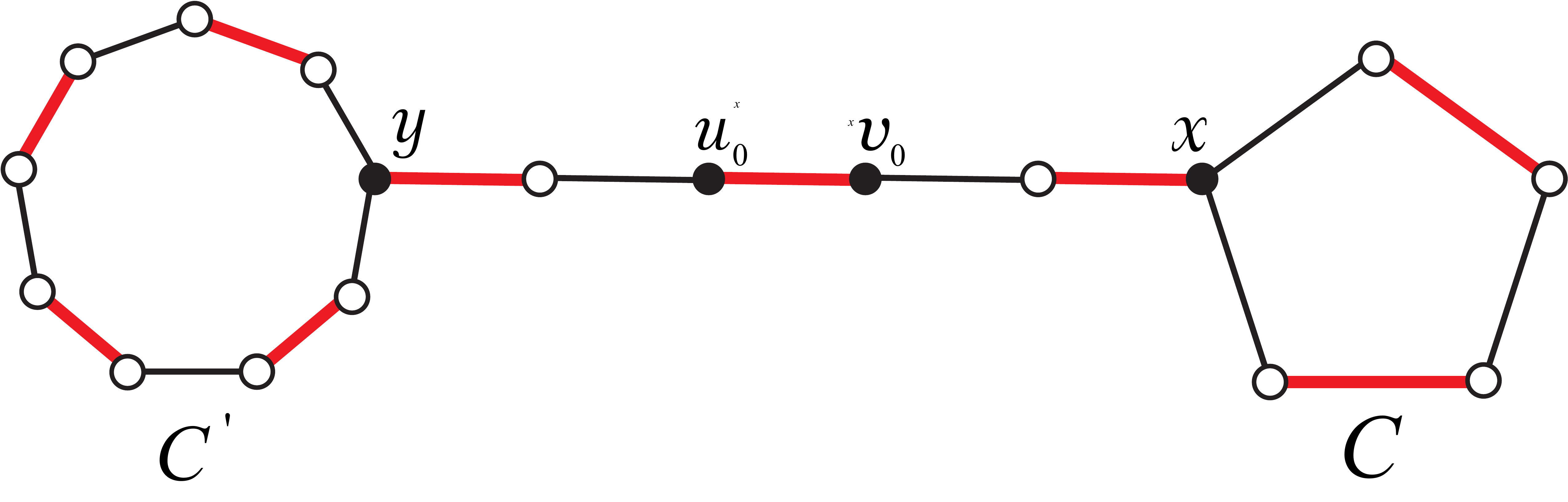}
            \caption{\label{f3} An odd dumbbell: The bold (red) edges form the unique perfect matching.}
        \end{figure}
       \begin{The}
         \label{disjoint}
         Let a graph $G$ have a unique perfect matching $M$ without pendant vertices.  Then $G$ contains an odd dumbbell as a  subgraph so that  $M$ contains the perfect matching of the subgraph.
      \end{The}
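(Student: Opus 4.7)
My plan is to apply Kotzig's theorem to a cut edge of $M$, build an ``odd lollipop'' inside each side of the cut, and then glue the two lollipops via the cut edge. By Theorem~\ref{cutline}, $M$ contains a cut edge $uv$; write $G-uv=G_1\sqcup G_2$ with $u\in G_1$ and $v\in G_2$, and set $M_i:=M\cap E(G_i)$. Uniqueness of $M$ in $G$ forces each $M_i$ to be the unique near-perfect matching of $G_i$ missing its respective endpoint, so $G_i$ contains no $M_i$-alternating cycle (whose symmetric difference with $M_i$ would otherwise contradict uniqueness). The no-pendant hypothesis yields $d_{G_1}(u)\ge 1$, $d_{G_1}(w)\ge 2$ for every $w\in V(G_1)\setminus\{u\}$, and symmetrically in $G_2$.

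The central step is to produce, in $G_1$, an odd cycle $C_1$ together with a path $P_1$ of even length from $u$ to a vertex of $C_1$, internally disjoint from $C_1$, whose union has a near-perfect matching inside $M_1$ missing only $u$. I would grow the alternating walk $v_0=u,v_1,v_2,\ldots$ by setting $v_{2i}:=M_1(v_{2i-1})$ at each odd step (forced) and choosing $v_{2i+1}\in N_{G_1}(v_{2i})\setminus\{v_{2i-1}\}$ at each even step (possible since $v_{2i}\neq u$ before the first repeat, hence $d_{G_1}(v_{2i})\ge 2$). Continuing until the first repeated vertex $v_k=v_j$ gives a cycle $C_1:=v_jv_{j+1}\cdots v_{k-1}v_j$. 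Walk edges $v_{i-1}v_i$ lie in $M_1$ iff $i$ is even, so $C_1$ is $M_1$-alternating iff $|C_1|=k-j$ is even; uniqueness of $M_1$ thus forces $|C_1|$ to be odd. If both edges of $C_1$ incident to $v_j$ lay in $M_1$, then $v_j$ would have two distinct $M_1$-partners $v_{j+1}$ and $v_{k-1}$, forcing the forbidden digon; hence both are non-matching, which by the alternation rule pins $j$ down to be even. Consequently $P_1:=v_0v_1\cdots v_j$ is a path of even length meeting $C_1$ only at $v_j$, and the ``even-index'' walk edges $v_1v_2,v_3v_4,\ldots,v_{j-1}v_j,v_{j+1}v_{j+2},\ldots,v_{k-2}v_{k-1}$ form a subset of $M_1$ saturating every vertex of $V(P_1)\cup V(C_1)$ except $u$. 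The boundary case $j=0$ (when $C_1$ passes through $u$ and $P_1$ collapses to the single vertex $u$) is absorbed by the same parity analysis: both cycle edges at $u$ are automatically non-matching since $u$ is $M_1$-unsaturated.

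Performing the symmetric construction at $v$ in $G_2$ yields a second lollipop $(P_2,C_2)$. Gluing via $uv$ produces two disjoint odd cycles $C_1\subseteq G_1$ and $C_2\subseteq G_2$ linked by the concatenated path $P_1uvP_2$, whose length is $\mathrm{even}+1+\mathrm{even}$ and hence odd; the union of the two partial matchings with $uv$ is a perfect matching of this dumbbell contained in $M$. I expect the parity bookkeeping at the first repeat $v_k=v_j$---in particular ruling out the digon case and pinning down that $j$ is even---to be the main technical obstacle, but this is precisely where the uniqueness of $M_1$ (forbidding even-length alternating cycles) and the degree bound $\delta(G)\ge 2$ (keeping the walk alive) pull their weight.
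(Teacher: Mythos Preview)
Your proposal is correct and follows essentially the same approach as the paper's proof: apply Kotzig's theorem to obtain a cut edge $uv\in M$, then in each component build an $M$-alternating ``lollipop'' rooted at the exposed vertex and glue via $uv$. The only difference is cosmetic: the paper takes a \emph{longest} $M$-alternating path from $u$ (resp.\ $v$), observes it must end with a matching edge, and closes the odd cycle with an extra edge from the terminal vertex back into the path, whereas you grow an alternating walk and close at the first repeat; the parity bookkeeping you spell out (ruling out the two-matching-edges case at $v_j$ and forcing $j$ even) is exactly what the paper suppresses with the phrase ``this must form an odd cycle.''
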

      \begin{proof}
           By Theorem \ref{cutline}, $G$ has a cut edge $uv\in M$. Since $\delta(G)\geq 2$, the components $G_u$ and  $G_v$ of $G-uv$ containing $u$ and $v$ respectively, must be non-trivial. Consider the longest $M$-alternating path in each component, starting with $u$ and $v$, respectively. This must end with an edge in $M$, and the last vertex must be adjacent to some vertex in the path. This must form an odd cycle and gives the required subgraph.
      \end{proof}

       Now we can extend the relation $gf(G)\geq Af(G)$ to the class of  graphs $\cal G$. First we extend Lemma \ref{bipartite} as follows.

       \begin{Lem}Let $G\in \mathcal{G}$ have at least two perfect matchings. Then for any perfect matching $M$ of $G$, there exists a minimum global forcing set $S_0$ such that  $S_0\setminus M\neq\varnothing$.
       \end{Lem}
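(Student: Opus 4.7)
The plan is to mirror the bipartite proof of Lemma \ref{bipartite}, with the single substantive change being to replace its use of Theorem \ref{B.U.PM} (which guaranteed a pendant vertex in a bipartite graph with a unique perfect matching) by a combined appeal to Theorem \ref{disjoint} and the defining property of the class $\mathcal{G}$. To begin, I would apply a sequence of LM operations to $G$ to obtain a graph $G'$ with $\delta(G') \geq 2$; since any two disjoint odd cycles $C, C'$ in $G'$ together with a perfect matching of $G' - C - C'$ lift back to the same configuration in $G$ (by adjoining the removed pendant edges into the matching), we have $G' \in \mathcal{G}$ as well. By Lemma \ref{leaf operation} it suffices to prove the statement for $G'$ and $M' := M \cap E(G')$.

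Now let $S$ be a minimum global forcing set of $G'$ and set $T = G' - S$. By Lemma \ref{GF.T}, $T$ is a connected spanning subgraph of $G'$ containing no nice cycle of $G'$, and by Lemma \ref{G-S+F} there exists $F \subseteq S$ such that $T + F$ has a unique perfect matching $M_0$. The critical step is to establish that $T + F$ has a pendant vertex. Once this is in hand, the remainder transcribes the bipartite argument: if $u$ is pendant in $T + F$, then $u$ is also pendant in $T$ (since $T$ is a connected spanning subgraph with $d_T(u) \leq d_{T+F}(u) = 1$), and letting $e \in E(T)$, $e' \in M'$ be the two edges at $u$ in $G'$, the subgraph $T_0 := T - e + e'$ (coinciding with $T$ when $e = e'$) is a connected spanning subgraph of the same size as $T$ containing no nice cycle of $G'$, so $S_0 := E(G') \setminus E(T_0)$ is a minimum global forcing set by Lemma \ref{GF.T}. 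The hypothesis $\delta(G') \geq 2$ then provides an edge of $G'$ at $u$ lying in $S_0 \setminus M'$.

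The hard part is the critical step. Suppose for contradiction that $\delta(T + F) \geq 2$. By Theorem \ref{disjoint} applied to $T + F$ with unique perfect matching $M_0$, there is an odd dumbbell $D \subseteq T + F$ whose perfect matching $M_D$ satisfies $M_D \subseteq M_0$; write $D = C \cup P \cup C'$ where $C, C'$ are disjoint odd cycles and $P = v_0 v_1 \ldots v_{2k+1}$ is an odd path internally disjoint from $C \cup C'$ with $v_0 \in V(C)$ and $v_{2k+1} \in V(C')$. Since $M_0$ is a perfect matching of $T + F$ and $M_D$ covers exactly $V(D)$, the set $M_0 \setminus M_D$ is a perfect matching of $G' - V(D)$. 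The internal vertices $v_1, \ldots, v_{2k}$ of $P$ form an even subpath, so they admit their own internal matching $\{v_1 v_2, v_3 v_4, \ldots, v_{2k-1} v_{2k}\}$; combining this with $M_0 \setminus M_D$ yields a perfect matching of $G' - V(C) - V(C')$. This contradicts $G' \in \mathcal{G}$. The crucial trick is the parity swap on $P$: although $M_D$ matches $v_0$ and $v_{2k+1}$ inward along $P$, replacing its restriction to $P$ by the alternate internal matching liberates both $v_0$ and $v_{2k+1}$, allowing the entirety of $V(C) \cup V(C')$ to be discarded while the rest of $G'$ remains perfectly matched.
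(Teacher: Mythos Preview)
Your proof is correct and follows essentially the same route as the paper's: reduce to $G'$ with $\delta(G')\geq 2$ via LM operations, take $T+F$ with a unique perfect matching from Lemma~\ref{G-S+F}, and use Theorem~\ref{disjoint} in place of Theorem~\ref{B.U.PM} to force a pendant vertex, then finish exactly as in Lemma~\ref{bipartite}. The paper compresses your ``critical step'' into the single assertion that $T+F\in\mathcal{G}$ (which follows since $T+F$ is spanning in $G'\in\mathcal{G}$) and hence has a pendant vertex; your explicit parity swap on $P$ is precisely the argument needed to unpack that assertion, so you have filled in details the paper leaves implicit rather than taken a different approach.
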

       \begin{proof} We use almost the same proof as Lemma \ref{bipartite}, but only replace Theorem \ref{B.U.PM} with Theorem \ref{disjoint}. For the present graph $G$, $T+F$ has a unique perfect matching. Although $T+F$ is not necessarily bipartite,  $T+F$ is in $\cal G$. By Theorem \ref{disjoint}, $T+F$ has a pendant vertex. The others remain unchanged.
       \end{proof}
       Then by using the above lemma we obtain the following generalization as the proof of Theorem \ref{Gf>Af} .

      \begin{The}
         \label{Ggf>Af}
         If $G\in \mathcal{G}$, then $gf(G)\geq Af(G)$.
      \end{The}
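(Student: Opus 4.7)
The plan is to mirror the induction used for the bipartite case (Theorem \ref{Gf>Af}), replacing the invocation of Lemma \ref{bipartite} by the immediately preceding generalized lemma. I induct on $e(G)$. For the base, if $G$ has a unique perfect matching, then no nice cycle exists, so $gf(G)=0$, and since there is no $M$-alternating cycle for the unique $M$, we also have $Af(G)=0$, giving the inequality trivially; in particular this handles $e(G)=v(G)/2$.

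For the inductive step, assume $G\in\mathcal{G}$ has at least two perfect matchings and that the statement holds for every member of $\mathcal{G}$ of strictly smaller size. Pick a perfect matching $M$ with $af(G,M)=Af(G)$ and, invoking the generalized lemma, choose a minimum global forcing set $S$ with $S\setminus M\neq\varnothing$. Take $e\in S\setminus M$ and set $G':=G-e$. Then $M$ is still a perfect matching of $G'$, so $G'$ has a perfect matching, and as will be checked below, $G'\in\mathcal{G}$. Next I would reproduce the two numerical estimates already established in the bipartite proof: every $M$-alternating cycle of $G$ either lies in $G'$ or uses $e$, so by Lemma \ref{anti-forcing} adjoining $e$ to any minimum anti-forcing set of $M$ in $G'$ yields an anti-forcing set of $M$ in $G$, giving $Af(G')\geq af(G',M)\geq Af(G)-1$; and by Lemma \ref{nice subgraph}, $S\setminus\{e\}$ is a global forcing set of $G'$, so $gf(G)=|S|\geq gf(G')+1$. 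Combining with the inductive hypothesis $gf(G')\geq Af(G')$ yields $gf(G)\geq Af(G)$.

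The only step not already supplied by the earlier argument is the verification that $G'=G-e\in\mathcal{G}$. This is the small but essential point needed so that the induction hypothesis applies. Suppose, for contradiction, that $G'$ contained two vertex-disjoint odd cycles $C,C'$ such that $G'-C-C'$ has a perfect matching $N$. Since $E(G')\subset E(G)$, the cycles $C,C'$ are vertex-disjoint odd cycles of $G$, and $N\subseteq E(G')\subseteq E(G)$ is a perfect matching of $G-C-C'$ on the same vertex set $V(G)\setminus (V(C)\cup V(C'))$. This contradicts $G\in\mathcal{G}$, so $G'\in\mathcal{G}$ as required. I expect this to be the main (though very short) obstacle, as it is the one place where the argument is not a verbatim copy of the bipartite proof; the rest is a direct transcription with Theorem \ref{B.U.PM} replaced by Theorem \ref{disjoint} inside the supporting lemma.
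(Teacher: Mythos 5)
Your proof is correct and follows essentially the same route as the paper, which simply reruns the induction from Theorem \ref{Gf>Af} with the generalized lemma in place of Lemma \ref{bipartite}. Your explicit check that $G-e\in\mathcal{G}$ (closure of $\mathcal{G}$ under deleting an edge from a spanning subgraph that retains a perfect matching) is a small detail the paper leaves implicit, and you verify it correctly.
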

         Combining Theorems \ref{solid} and \ref{Ggf>Af}, we  directly get the following result.
      \begin{Cor}
         If $G$ is a solid brick, then $gf(G)\geq Af(G)$.
      \end{Cor}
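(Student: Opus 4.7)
The plan is to derive this corollary as a direct consequence of the two results just established, namely Theorem \ref{solid} of de Carvalho, Lucchesi and Murty, and Theorem \ref{Ggf>Af}. No new combinatorial work is needed; the statement is simply a syntactic specialization of Theorem \ref{Ggf>Af} under a structural hypothesis that is known to be equivalent to membership in $\mathcal{G}$ when the graph is a brick.

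First I would invoke Theorem \ref{solid}: for any brick $G$, the properties (i) $G$ is solid and (ii) $G\in\mathcal{G}$ are equivalent. Thus the hypothesis that $G$ is a solid brick immediately yields $G\in\mathcal{G}$. Next I would apply Theorem \ref{Ggf>Af} to this $G$ to conclude $gf(G)\geq Af(G)$. That finishes the argument in two lines.

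There is no real obstacle here, since both ingredients are already in hand; the only thing to be careful about is the logical scope of Theorem \ref{solid}, which is proved only for bricks (i.e., $3$-connected bicritical graphs), so the hypothesis ``brick'' is essential for applying the equivalence (i)$\Leftrightarrow$(ii). Once that hypothesis is in place, the transition $G\text{ solid brick}\Rightarrow G\in\mathcal{G}\Rightarrow gf(G)\geq Af(G)$ is automatic. One could also add a one-sentence remark that by the equivalence (ii)$\Leftrightarrow$(iii) in Theorem \ref{solid} the same conclusion holds whenever $PM(G)$ consists of non-negative $1$-regular vectors, matching the more general statement announced in the introduction.
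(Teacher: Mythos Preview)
Your proposal is correct and follows exactly the paper's own approach: the paper simply states that the corollary follows by combining Theorem~\ref{solid} and Theorem~\ref{Ggf>Af}, which is precisely the two-line argument you outline.
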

      For general non-bipartite graphs, the three conditions in Theorem \ref{solid} are not equivalent.
      However  we have the following implication.

        \begin{Lem}
            \label{polytope}
             If  $PM(G)$ consists  of non-negative $1$-regular vectors, then $G\in \mathcal{G}$.
        \end{Lem}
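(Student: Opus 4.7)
The plan is to argue the contrapositive: assume $G\notin\mathcal{G}$ and exhibit a non-negative $1$-regular vector $\boldsymbol{x}\in\mathbb{R}^{E(G)}$ that lies outside $PM(G)$ because it violates some odd set constraint from Theorem \ref{polytope definition}. By hypothesis on $G$, there exist two disjoint odd cycles $C$ and $C'$ such that $G-C-C'$ has a perfect matching $M'$. These three pieces are exactly what we need: two odd cycles on which we can place uniform weight $1/2$, and a perfect matching outside them on which we place weight $1$.

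Concretely, the plan is to define
\begin{equation*}
\boldsymbol{x}(e)=\begin{cases} \tfrac{1}{2}, & e\in E(C)\cup E(C'),\\ 1, & e\in M',\\ 0, & \text{otherwise.}\end{cases}
\end{equation*}
First I would verify that $\boldsymbol{x}$ is non-negative and $1$-regular: each vertex of $C$ is incident to exactly two edges of $C$ (each contributing $1/2$) and to no other edge carrying positive weight, since $V(C)$ is disjoint from $V(C')$ and from $V(G-C-C')$; the same holds symmetrically for vertices of $C'$; and each vertex of $G-C-C'$ is covered by exactly one edge of $M'$. Hence $\boldsymbol{x}(\partial_G(v))=1$ for every $v\in V(G)$.

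Next I would test the odd set constraint at $S:=V(C)$, which is odd because $|V(C)|$ is odd. Every edge incident with $S$ but not in $E(C)$ has one endpoint in $V(C')\cup V(G-C-C')$, hence lies outside $E(C)\cup E(C')\cup M'$ and carries weight $0$. Thus $\boldsymbol{x}(\partial_G(S))=0<1$, violating the odd set inequality. By Theorem \ref{polytope definition}, $\boldsymbol{x}\notin PM(G)$, yet $\boldsymbol{x}$ is a non-negative $1$-regular vector. This contradicts the hypothesis that $PM(G)$ consists of the non-negative $1$-regular vectors, completing the proof.

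The argument is conceptually short; the only point requiring care is confirming that the three pieces $C$, $C'$ and $M'$ are pairwise vertex-disjoint so that the degree check at every vertex and the emptiness of $\partial_G(V(C))\cap \mathrm{supp}(\boldsymbol{x})$ both hold, but this is immediate from the definition of $\mathcal{G}$ and the choice of $M'$ as a perfect matching of $G-C-C'$.
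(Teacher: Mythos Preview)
Your proof is correct and follows essentially the same approach as the paper: both argue by contradiction (equivalently, the contrapositive), construct the identical vector $\boldsymbol{x}$ with weights $1$ on $M'$, $\tfrac12$ on the two odd cycles, and $0$ elsewhere, verify it is non-negative and $1$-regular, and then observe that $\boldsymbol{x}(\partial_G(V(C)))=0$ violates the odd set constraint of Theorem~\ref{polytope definition}. The only difference is that you spell out the degree check and the cut computation in slightly more detail than the paper does.
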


        \begin{proof}
           Suppose to the contrary that $G$ has disjoint odd cycles $C$ and $C'$ such that $G-C-C'$ has a perfect matching $M$.
           Let $\boldsymbol{x}$ be a vector in $\mathbb{R}^{E(G)}$ such that $\boldsymbol{x}(e)=1$, if $e\in M$; $\boldsymbol{x}(e)=\frac{1}{2}$, if $e\in C\cup C'$; $\boldsymbol{x}(e)=0$, otherwise. Note that $\boldsymbol{x}$ is a non-negative and $1$-regular vector. So $\boldsymbol{x}\in PM(G)$.  But for odd cycle $C$, $\boldsymbol{x}(\partial_G(C))=0$,  contradicting  the odd set constraint in  Theorem \ref{polytope definition}.
        \end{proof}
         By Theorem \ref{Ggf>Af} and Lemma \ref{polytope}, we can get the following corollary.
        \begin{Cor}
             If $PM(G)$ consists  of non-negative $1$-regular vectors, then $gf(G)\geq Af(G)$.
        \end{Cor}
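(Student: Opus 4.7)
The plan is immediate: chain the two preceding results. First, I would apply Lemma \ref{polytope} to the hypothesis that $PM(G)$ consists of non-negative $1$-regular vectors; this yields $G\in\mathcal{G}$, i.e., $G$ contains no two vertex-disjoint odd cycles $C$ and $C'$ whose removal leaves a subgraph with a perfect matching. Second, I would invoke Theorem \ref{Ggf>Af}, which asserts that every $G\in\mathcal{G}$ satisfies $gf(G)\geq Af(G)$. Composing the two implications gives the desired inequality, and the proof is a one-line argument.

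Because both intermediate results have already been proved, there is no substantive obstacle to overcome. The only minor point worth verifying is that the standing hypothesis of the paper (every graph considered has at least one perfect matching) applies, so that $gf(G)$ and $Af(G)$ are well-defined; this is automatic, since $PM(G)$ is by definition the convex hull of the incidence vectors $q^{M}$ over all perfect matchings $M$ of $G$, and the statement tacitly presupposes $PM(G)\neq\varnothing$. Thus the corollary follows at once from Lemma \ref{polytope} and Theorem \ref{Ggf>Af}.
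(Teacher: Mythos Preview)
Your proposal is correct and mirrors the paper's own justification exactly: the corollary is obtained by combining Lemma~\ref{polytope} (which yields $G\in\mathcal{G}$) with Theorem~\ref{Ggf>Af} (which then gives $gf(G)\geq Af(G)$). There is nothing to add.
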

      An example in Fig. \ref{f4} shows  there exists a matching covered graph $G$  not in $ \mathcal{G}$ with $gf(G)\geq Af(G)$.
        \begin{figure}[H]
            \centering
            \includegraphics[width=0.2\textwidth]{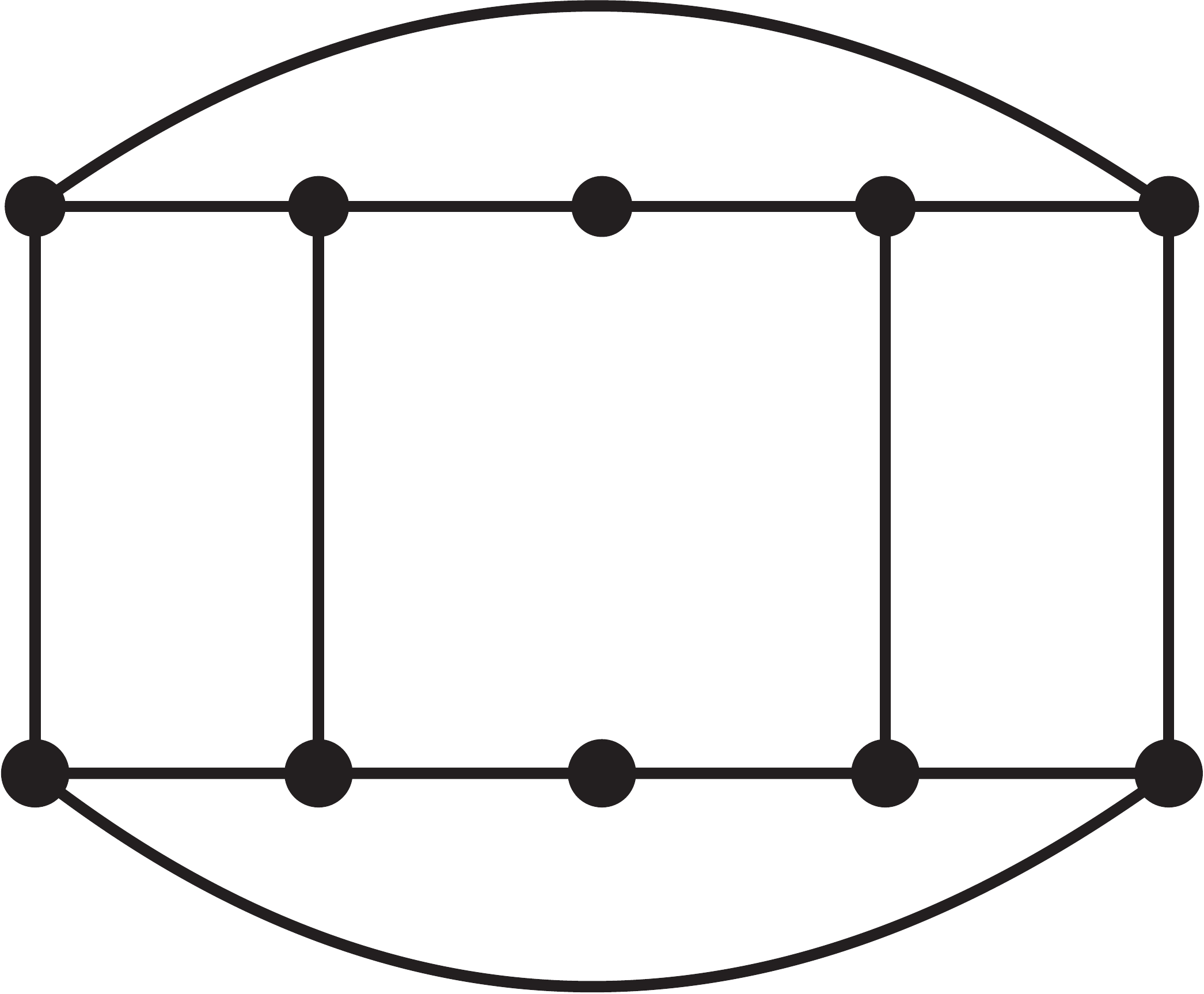}
            \caption{\label{f4}  A graph $G\notin \mathcal{G}$, and $gf(G)=Af(G)=3$.}
        \end{figure}
\section{The difference $gf(G)-Af(G)$}

    In this section, we will give sharp lower and upper bounds on  the difference $gf(G)-Af(G)$ on general connected graphs $G$.
    \begin{The}
        {\rm\cite{lovasz2009matching}}
        \label{G.M.E}
        If $G$ has a unique perfect matching and $2n$ vertices, then $e(G)\leq n^2$.
    \end{The}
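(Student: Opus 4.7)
My plan is to proceed by induction on $n$. Since a disconnected instance reduces to its components — if they have $2n_1,\dots,2n_k$ vertices with $\sum n_i = n$, then $\sum n_i^2 \leq (\sum n_i)^2 = n^2$ — I may assume $G$ is connected. The base case $n = 1$ is immediate, as a graph with two vertices carrying a perfect matching has exactly one edge. For the inductive step, Theorem~\ref{cutline} supplies a cut-edge $uv$ of $G$ lying in the unique perfect matching $M$. Let $G_u$ and $G_v$ be the components of $G - uv$ containing $u$ and $v$ respectively. Because $uv$ is the only edge between them, $M \setminus \{uv\}$ perfectly matches the vertices of each side minus $u$ or $v$; in particular, both $|V(G_u)|$ and $|V(G_v)|$ are odd, so write $|V(G_u)| = 2a+1$ and $|V(G_v)| = 2b+1$ with $a+b+1 = n$.

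Next, I would verify that each of $G_u - u$ and $G_v - v$ has a unique perfect matching, so that the induction hypothesis can be applied. The restriction of $M$ to each is clearly a perfect matching; a hypothetical second perfect matching $M_u'$ of $G_u - u$ could be combined with $\{uv\}$ and $M \cap E(G_v - v)$ into a second perfect matching of $G$, contradicting uniqueness. Induction then gives $e(G_u - u) \leq a^2$ and $e(G_v - v) \leq b^2$. Counting the edges of $G$ as those inside $G_u - u$, inside $G_v - v$, incident to $u$ within $G_u$ (at most $2a$), incident to $v$ within $G_v$ (at most $2b$), together with the cut-edge $uv$, yields
$$e(G) \;\leq\; a^2 + b^2 + 2a + 2b + 1 \;=\; (a+1)^2 + (b+1)^2 - 1.$$

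To finish, set $x = a+1$ and $y = b+1$; then $x, y \geq 1$ and $x + y = n+1$, so by convexity of $t \mapsto t^2$ on this segment the maximum of $x^2 + y^2$ is attained at the endpoints $(1,n)$ or $(n,1)$, giving $x^2 + y^2 \leq n^2 + 1$ and hence $e(G) \leq n^2$. The only mildly delicate point is the degenerate case $a = 0$, where $u$ is a pendant vertex and $G_u$ is a single vertex: there $a^2 = 0$ is vacuous and the degree bound $d_{G_u}(u) = 0$ is automatic, so the edge count still goes through. I do not anticipate a substantive obstacle; the real content of the argument is Theorem~\ref{cutline}, which lets the problem split cleanly along a cut-edge so that a direct inductive edge-count does the rest.
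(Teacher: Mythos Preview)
Your argument is correct. The induction via Kotzig's cut-edge theorem (Theorem~\ref{cutline}) works exactly as you describe, and the final convexity estimate $x^2+y^2\le n^2+1$ for $x+y=n+1$, $x,y\ge 1$, closes the bound cleanly. The one cosmetic point worth tightening is the induction structure: you reduce to connected $G$ in order to invoke Theorem~\ref{cutline}, but then apply the inductive hypothesis to $G_u-u$ and $G_v-v$, which need not be connected. This is harmless---simply phrase the induction on $n$ for arbitrary (possibly disconnected) $G$, handling the disconnected case by splitting into components as you already indicated, and the connected case via Kotzig---but as written the quantifiers are slightly tangled.

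As for comparison with the paper: there is nothing to compare against. Theorem~\ref{G.M.E} is quoted from Lov\'asz and Plummer's \emph{Matching Theory} with a citation and no proof; the present paper merely uses it as a black box in the proof of Theorem~\ref{gf-Af(b)}. Your proof is in fact the standard one, and it dovetails with the surrounding material since Theorem~\ref{cutline} is already stated (and likewise cited without proof) just a few lines earlier.
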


    \begin{The}
        \label{gf-Af(b)}
        Let $G$ be a  connected graph with a perfect matching and $2n\geq 4$ vertices. Then
        \begin{displaymath}
            -\frac{1}{2}(n^2-n-2)\leq gf(G)-Af(G)\leq (n-1)(n-2),
        \end{displaymath}
        and the left equality holds if and only if $n=2$.
    \end{The}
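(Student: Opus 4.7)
The basic identity driving the proof is $gf(G) - Af(G) = e(T_{M^*}) - e(T)$, where by Lemma~\ref{GF.T} we have $gf(G) = e(G) - e(T)$ for a maximum connected spanning subgraph $T$ of $G$ containing no nice cycle of $G$, and $Af(G) = e(G) - e(T_{M^*})$ for a maximum subgraph $T_{M^*}$ of $G$ whose unique perfect matching is some $M^*$ realising $Af(G)$. Together with the crude bounds $e(T) \geq 2n-1$ (Lemma~\ref{GF.T}) and $e(T_{M^*}) \leq n^2$ (Theorem~\ref{G.M.E}), this immediately gives $gf(G) - Af(G) \leq (n-1)^2$, and both directions of the claim require an extra saving of roughly $n-1$.

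For the upper bound $(n-1)(n-2)$, the plan is to split on whether $G$ is bipartite. The bipartite case is settled at once by Theorem~\ref{B.gf-Af<}, which gives the stronger $\tfrac{1}{2}(n-1)(n-2)$. In the non-bipartite case I would further split on the bipartiteness of $T_{M^*}$: if $T_{M^*}$ is bipartite, Theorem~\ref{B.U.PM} yields the sharper $e(T_{M^*}) \leq n(n+1)/2$ and the bound follows as in the bipartite case; otherwise Theorem~\ref{disjoint} provides an odd dumbbell inside $T_{M^*}$, and I would use the maximality of $T$ together with the abundance of vertex-disjoint odd cycles in $G$ to pack $n-1$ independent odd cycles into $T$, giving $e(T) \geq 3n-2$ and hence $e(T_{M^*}) - e(T) \leq n^2 - (3n-2) = (n-1)(n-2)$. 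Rigorously establishing this cycle-packing in the non-bipartite sub-case is the principal obstacle; the extremal example $G = K_{2n}$, whose $T$ is a cactus of $n-1$ triangles, guides the construction but the general argument is delicate.

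For the lower bound $-\tfrac{1}{2}(n^2-n-2)$, I would combine the upper estimate $Af(G) \leq \tfrac{1}{2}(e(G)-n)$ from Theorem~\ref{Af(G)<} with the lower estimate $gf(G) \geq \max\{0, e(G)-n^2\}$, the latter obtained by applying Lemma~\ref{G-S+F} and Theorem~\ref{G.M.E} to $T+F$ (which has a unique perfect matching on $2n$ vertices, forcing $e(T)+|F| \leq n^2$). Separating the cases $e(G) \leq n^2$ and $e(G) > n^2$ yields the crude $Af(G) - gf(G) \leq \tfrac{1}{2}(n^2-n)$; the missing unit is recovered from the integrality of $Af$ and $gf$ together with Theorem~\ref{gf>phi(G)}, which forces $gf(G) \geq 1$ whenever $Af(G) \geq 1$ (since then $\Phi(G) \geq 2$). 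For the left equality, when $n = 2$ we have $-\tfrac{1}{2}(n^2-n-2) = 0$ and a direct enumeration of connected $4$-vertex graphs with a perfect matching ($K_4$, $K_4-e$, $C_4$, and so on) shows $gf(G) = Af(G)$ always; for $n \geq 3$, at least one inequality in the lower-bound chain above is strict in every case (either $Af(G) = 0$, in which case $gf(G) - Af(G) \geq 0 > -\tfrac{1}{2}(n^2-n-2)$, or $Af(G) \geq 1$ and the strict gain $gf(G) \geq 1$ combined with integrality removes the final unit), so equality fails for $n \geq 3$.
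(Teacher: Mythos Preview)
Your lower-bound argument is essentially the paper's: both combine $Af(G)\le (e(G)-n)/2$ from Theorem~\ref{Af(G)<} with $e(T)\le n^2$ (via Lemma~\ref{G-S+F} and Theorem~\ref{G.M.E}). The paper disposes of $gf(G)\le 1$ separately and then feeds $gf(G)\ge 2$ into the chain
\[
gf(G)-Af(G)\ \ge\ \tfrac12\,gf(G)-\tfrac12\,e(T)+\tfrac n2,
\]
which lands exactly on $-\tfrac12(n^2-n-2)$; your variant with $gf(G)\ge 1$ plus integrality reaches the same value.

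The upper-bound plan, however, has a real gap. The assertion that ``$T_{M^*}$ non-bipartite'' lets you pack $n-1$ odd cycles into $T$ and hence force $e(T)\ge 3n-2$ is unjustified and in fact false. Take $G=K_{n,n}$ together with one extra edge $u_1u_2$ inside a colour class: every perfect matching of $G$ is a perfect matching of $K_{n,n}$, $T_{M^*}$ picks up the edge $u_1u_2$ and is non-bipartite, yet $G$ contains essentially one odd cycle's worth of slack and $e(T)$ stays near $2n$, far below $3n-2$. The paper does \emph{not} split on the bipartiteness of $T_{M^*}$. It thresholds on $e(T_M)$: if $e(T_M)\le n^2-n+1$ the crude $e(T)\ge 2n-1$ already gives $(n-1)(n-2)$; if $e(T_M)=n^2-k$ with $0\le k\le n-2$, then among the $n-1$ four-vertex blocks $\{u_i,v_i,u_{i+1},v_{i+1}\}$ at least $n-1-k$ carry four edges and hence a triangle, and these (odd, so non-nice) triangles push $e(T)\ge 3n-2-k$, whence $e(T_M)-e(T)\le (n^2-k)-(3n-2-k)=(n-1)(n-2)$. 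The crucial mechanism you are missing is that the number of packable non-nice cycles is governed by how close $e(T_M)$ is to $n^2$, not by a qualitative bipartite/non-bipartite dichotomy.

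Your treatment of the left equality is also incomplete. Saying ``at least one inequality is strict for $n\ge 3$'' is not an argument: nothing in your chain rules out, for instance, $e(G)=n^2$, $gf(G)=1$, $Af(G)=(n^2-n)/2$. In the paper, equality in the chain forces $Af(G)=(e(G)-n)/2$, so $G$ has a nice perfect matching; Corollary~\ref{gf(G)>n-1} then gives $gf(G)\ge n-1$, which combined with $gf(G)=2$ yields $n\le 3$, and the case $n=3$ is eliminated by an explicit check of the two possible $11$-edge graphs with a nice perfect matching.
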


    \begin{proof}
    ($1$) We first prove the left inequality. If $gf(G)=0$, then $G$ has a unique perfect matching, so $Af(G)=0$. If $gf(G)=1$, then $G$ has at least two perfect matchings. By Theorem \ref{gf>phi(G)}, we have that $\Phi(G)\leq 2^{gf(G)}=2$. Hence, $G$ has exactly two perfect matchings, denoted by $M_1$ and $M_2$. Obviously, $Af(G)=af(G,M_1)=af(G,M_2)=1$. So for the case of $gf(G)\leq 1$, we have that $-\frac{1}{2}(n^2-n-2)\leq 0=gf(G)-Af(G)$.

    Next we consider $gf(G)\geq 2$. Let $T$ be a maximum  subgraph of $G$ without any nice cycle of $G$. By Lemma \ref{G-S+F}, we can find an edge subset $F\subseteq E(G)\setminus E(T)$ such that $T+F$ has a unique perfect matching.
    From Theorem \ref{G.M.E}, $e(T)\leq e(T+F)\leq n^2$. By Lemma \ref{GF.T} and Theorem \ref{Af(G)<}, we have
    \begin{eqnarray}
        gf(G)-Af(G) &\geq & gf(G)-\frac{2e(G)-v(G)}{4}
        =gf(G)-\frac{gf(G)+e(T)}{2}+\frac{n}{2}\nonumber \\
                    &=& \frac{1}{2}gf(G)-\frac{1}{2}e(T)+\frac{n}{2}
                    \geq  -\frac{1}{2}(n^2-n-2)\label{4}.
    \end{eqnarray}

    Now we verify the equivalence condition for the left equality. If $n=2$, for all cases of $G$, we can verify that $gf(G)-Af(G)=0$. Conversely, if $gf(G)-Af(G)=-\frac{1}{2}(n^2-n-2)$, then for $gf(G)\leq 1$, we know $gf(G)-Af(G)=0$, which implies that $n=2$. For
    $gf(G)\geq 2$, all the equalities in Ineq. (\ref{4}) hold, so $Af(G)=\frac{2e(G)-v(G)}{4}$ and $gf(G)=2$.
    By Corollary \ref{gf(G)>n-1}, $n\leq gf(G)+1=3$. Next we will prove that if $n=3$, then $gf(G)\neq 2$. For $n=3$, we have $gf(G)-Af(G)=-2$, so $Af(G)=4$, $e(G)=11$ and $c(G)=6$. By Theorem \ref{N.P.M}, $G$ has two cases $G_1$ and $G_2$ as shown in Fig. $5$ in the sense of isomorphism, where $\{u_1u_2,u_3u_4,u_5u_6\}$ is a nice perfect matching of $G$.
    \begin{figure}[H]
        \centering
        \subfigure[Graph $G_1$]{\label{f5_a}
            \includegraphics[width=0.2\textwidth]{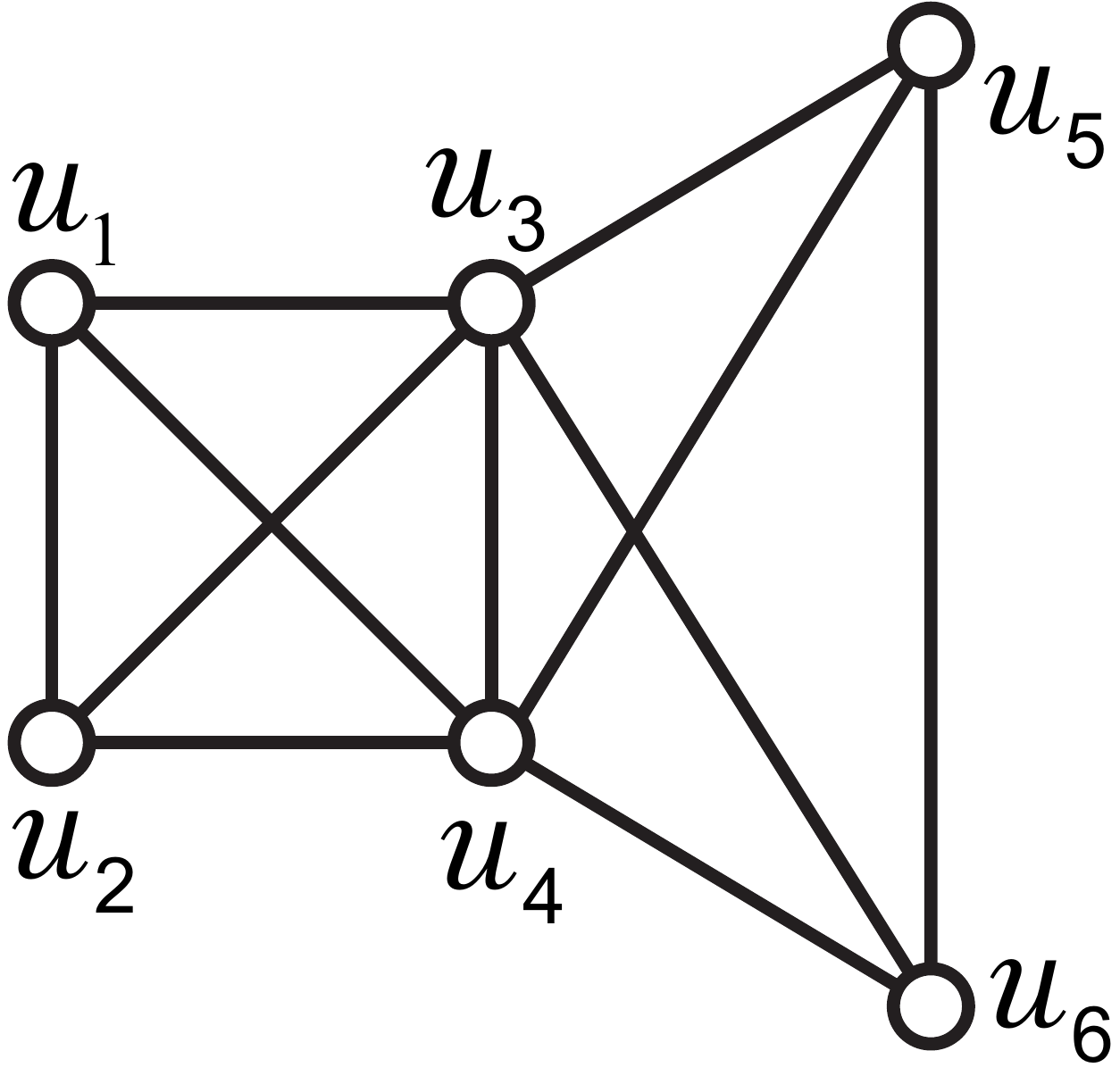}}\ \ \ \ \ \ \ \ \ \ \ \
        \subfigure[Graph $G_2$]{\label{f5_b}
            \includegraphics[width=0.2\textwidth]{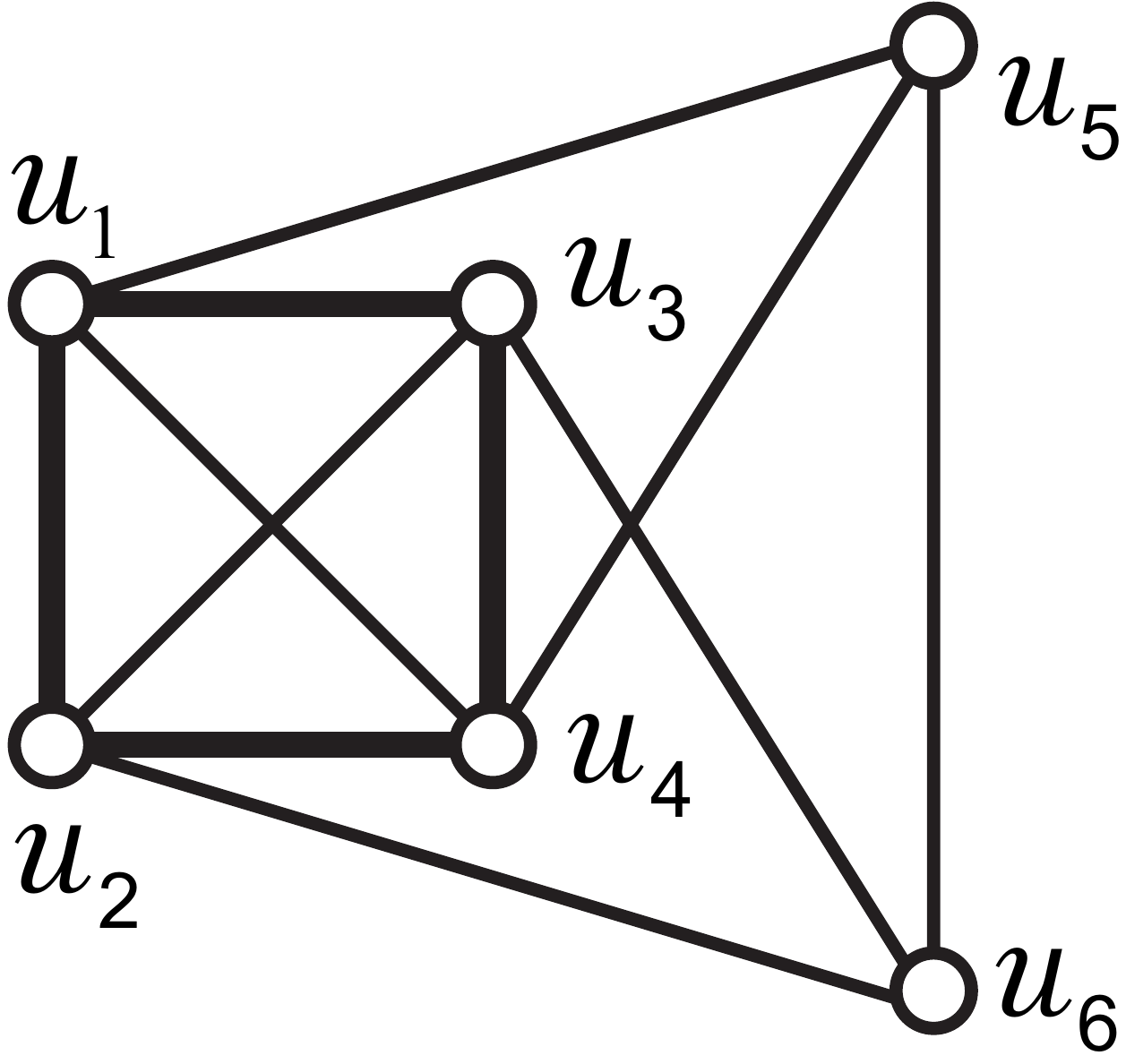}}
        \caption{Two cases of $G$ with a nice perfect matching, $n=3$ and $e(G)=11$.}
    \end{figure}
     We first consider the graph $G_1$. Let $S_1$ be a minimum global forcing set of $G_1$. Then $|S_1\cap E(G')|\geq 2$, for the subgraph $G'$ of $G_1$ induced by $\left\{u_1,u_2,u_3,u_4\right\}$, which is isomorphic to $K_4$. Similarly, the subgraph $G''$ of $G_1$ induced by $\left\{u_3,u_4,u_5,u_6\right\}$ satisfies that $|S_1\cap E(G'')|\geq 2$. Thus we have $gf(G_1)=|S_1|\geq |S_1\cap E(G')|+|S_1\cap E(G'')|-1\geq 3$.

    Next we claim that $gf(G_2)=4$.
    Since $\{u_1u_2,u_2u_4,u_3u_4,u_1u_3\}$ is a global forcing set of $G_2$, $gf(G_2)\leq 4$. Suppose to the contrary that $gf(G_2)\leq 3$. Let $S_2$ be a minimum global forcing set of $G_2$. Then $T_1=G_2-S_2$ contains at least three cycles. Note that every even cycle in $G_2$ is nice, with length either $4$ or $6$. And any three triangles in $G_2$ must produce a $4$-cycle. So $T_1$ has a pentagon $C$. Let $C'$ be another cycle in $T_1$ except for $C$. Then $|V(C)\cap V(C')|\geq 2$. So we can take two consecutive common vertices of $C$ and $C'$ such that there exists three internally disjoint paths between them, which must contain an even cycle, a contradiction. Thus $gf(G_2)=4$.

    ($2$) We now prove the right inequality.
    For any perfect matching $M$ of $G$, let $T_M$ be a maximum subgraph of $G$ with a unique perfect matching $M$.  Then $af(G,M)=e(G)-e(T_M)$.
    Corollary \ref{G.M.E} implies that $2n-1\leq e(T_M)\leq n^2$.
    Let $T$ be a maximum spanning subgraph of $G$ without nice cycles of $G$. Then $e(T)\geq 2n-1$. If $e(T_M)\leq n^2-n+1$, then
        $gf(G)-Af(G)\leq gf(G)-af(G,M)=e(T_M)-e(T)\leq (n^2-n+1)-(2n-1)=n^2-3n+2$,
    and the required result holds.

    Next we consider the case $e(T_M)\geq n^2-n+2$. Let $e(T_M)=n^2-k$, where $0\leq k\leq n-2$.
    By the definition of $T_M$, the subgraph of $T_M$ induced by $\{u_i,v_i,u_j,v_j\}$ can contain at most 4 edges. Consider the subgraphs induced by $\{u_i,v_i,u_{i+1},v_{i+1}\}$ for $1\leq i\leq n-1$. If $k+1$ of these subgraphs contain only 3 edges, the number of edges in $T_M$ is at most $n^2-(k+1)$, a contradiction. Therefore $n-1-k$ of these must have 4 edges and give $n-1-k$ triangles whose edges can not form additional cycles. So we have $e(T)\geq 2n-1+n-1-k=3n-2-k$.
    It follows that $gf(G)-Af(G)\leq e(T_M)-e(T)\leq (n^2-k)-(3n-2-k)=n^2-3n+2$ and we are done.
\end{proof}

The following discussions will show that the upper bound on $gf(G)-Af(G)$ in Theorem \ref{gf-Af(b)} can be achieved.

\begin{Pro}
   \label{K_{2n}}
   $gf(K_{2n})=2(n-1)^2$ and $gf(K_{2n})-Af(K_{2n})=(n-1)(n-2)$.
\end{Pro}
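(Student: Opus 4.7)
The plan is to compute $Af(K_{2n})$ and $gf(K_{2n})$ separately, each by characterising the relevant extremal subgraph structure in the complete graph.

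For $Af(K_{2n})$: Theorem \ref{Af(G)<} gives $Af(K_{2n}) \leq \frac{2e(K_{2n}) - v(K_{2n})}{4} = \frac{2n(2n-1) - 2n}{4} = n(n-1)$. Every perfect matching $M$ of $K_{2n}$ is nice in the sense of Theorem \ref{N.P.M}, because $K_{2n}$ contains every edge between any two $M$-edges, so the biconditionals in Theorem \ref{N.P.M} are trivially satisfied. Hence the bound is attained and $Af(K_{2n}) = n(n-1)$.

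For $gf(K_{2n})$: by Lemma \ref{GF.T}, $gf(K_{2n}) = e(K_{2n}) - e(T)$ where $T$ is a maximum connected spanning subgraph of $K_{2n}$ containing no nice cycle of $K_{2n}$. Here the key observation is a complete classification of nice cycles: a cycle $C$ of $K_{2n}$ is nice iff $K_{2n} - V(C)$ has a perfect matching, which happens iff $|V(C)|$ is even (as the remaining induced subgraph is again a complete graph on an even number of vertices, and any complete graph on an odd number of vertices has no perfect matching). Consequently, $T$ is a connected spanning subgraph of $K_{2n}$ containing \emph{no even cycle}; in particular, every block of $T$ is either a bridge or an odd cycle, so $T$ is an odd cactus.

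The main step is the bound $e(T) \leq 3n-2$. I would prove it via the block-cut tree. Let $b_1$ be the number of bridge-blocks and $b_2$ the number of odd cycle-blocks, and let $c_1,\ldots,c_{b_2}$ be their lengths (each $\geq 3$). A standard count through the block-cut tree on $b_1+b_2$ block-nodes and the cut-vertices gives
\begin{equation*}
 v(T) + b_1 + b_2 - 1 = 2b_1 + \sum_{i=1}^{b_2} c_i \geq 2b_1 + 3b_2,
\end{equation*}
so $b_2 \leq \frac{v(T) - 1 - b_1}{2} = \frac{2n - 1 - b_1}{2}$. Since $b_2$ is an integer, $2n-1-b_1$ must be even, forcing $b_1 \geq 1$, and then $b_2 \leq n-1$. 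Using the cactus identity $e(T) = v(T) - 1 + b_2$, we obtain $e(T) \leq (2n-1) + (n-1) = 3n-2$. Equality is realised by the ``friendship'' configuration: take $n-1$ triangles sharing a common vertex $v_0$ (covering $2n-1$ vertices) together with a single bridge joining $v_0$ to the remaining vertex; this is a connected spanning odd cactus on $2n$ vertices with $3n-2$ edges. Hence $e(T) = 3n-2$, and therefore
\begin{equation*}
 gf(K_{2n}) = n(2n-1) - (3n-2) = 2n^2 - 4n + 2 = 2(n-1)^2.
\end{equation*}
Finally, $gf(K_{2n}) - Af(K_{2n}) = 2(n-1)^2 - n(n-1) = (n-1)(2n-2-n) = (n-1)(n-2)$.

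The main obstacle will be the inequality $e(T) \leq 3n-2$; everything else is computation. Care is needed to argue that \emph{no} even cycle (not merely no $4$-cycle) is present, which forces the block-structure of $T$ to be a cactus with all cycles odd, together with the parity trick that makes at least one bridge unavoidable when $v(T)$ is even.
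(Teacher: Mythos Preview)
Your proof is correct and follows essentially the same route as the paper: both arguments identify that the nice cycles of $K_{2n}$ are precisely the even cycles, deduce that the maximum spanning subgraph $T$ without nice cycles is an odd cactus, bound the number of cycle-blocks by $n-1$, conclude $e(T)\le 3n-2$, and exhibit a friendship-type configuration attaining this; the paper counts via $\sum|E(C_i)|\le e(T)$ on edge-disjoint odd cycles, while you count vertices through the block--cut tree, but the computations are equivalent. One small slip: from $b_2\le\frac{2n-1-b_1}{2}$ you cannot infer that $2n-1-b_1$ is even---but this is harmless, since $b_2$ being an integer already gives $b_2\le\big\lfloor\frac{2n-1}{2}\big\rfloor=n-1$ directly when $b_1\ge 0$.
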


\begin{proof}
   Let $T$ be a maximum spanning subgraph of $K_{2n}$ which contains no any nice cycle of $K_{2n}$. By Lemma \ref{GF.T}, $gf(K_{2n})=e(K_{2n})-e(T)$. Obviously, every even cycle in $K_{2n}$ is nice. So $T$ has no even cycles. We claim that $e(T)\leq 3n-2$.
   Let $C_1,C_2,\ldots,C_{c(T)}$ be different cycles in $T$. Then they are pairwise edge-disjoint. Otherwise, there are distinct $C_i$ and $C_j$ sharing an edge. So we can take a path $P$ on $C_i$, which has only two end vertices and no edges in $C_j$.
   Then $C_j\cup P$ consists of three internally disjoint paths between two vertices, which must contain an even cycle, a contradiction.
   For any $1\leq i\leq c(T)$, we have $e(C_i)\geq 3$. So
   \begin{displaymath}
   e(T)=c(T)+2n-1\geq \sum\limits_{i=1}^{c(T)}\left|E(C_i)\right|\geq 3c(T),
   \end{displaymath}
   which implies that $c(T)\leq n-1$ and $e(T)\leq 3n-2$. So the claim holds.
   \begin{figure}[H]
   \centering
       \includegraphics[width=0.6\textwidth]{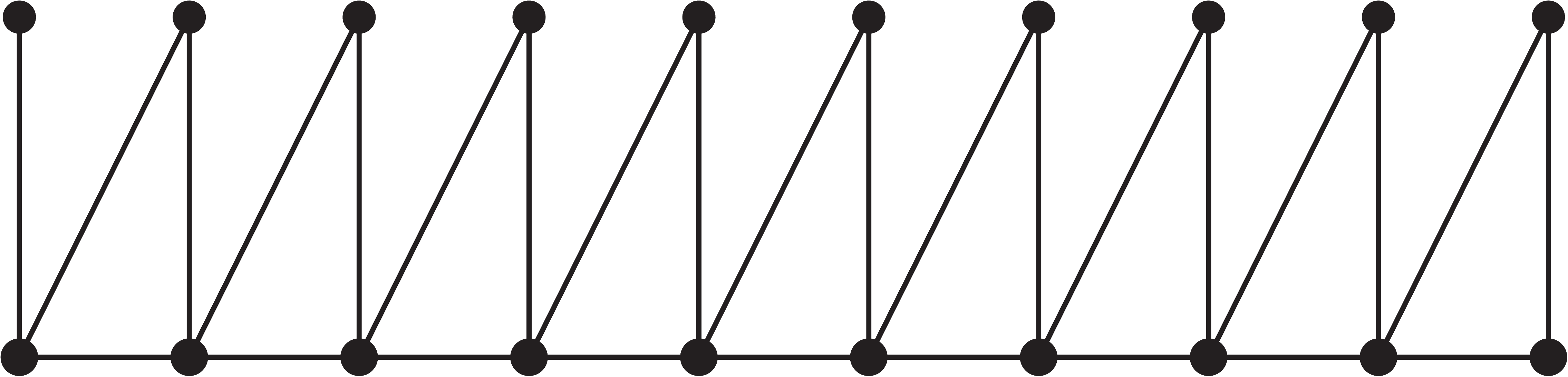}
   \caption{\label{f6}A spanning subgraph without any nice cycles of $K_{2n}$ for $n=10$.}
   \end{figure}
   On the other hand, we can find $n-1$ edge-disjoint triangles in $K_{2n}$ as shown in Fig. \ref{f6}, so $e(T)\geq 3n-2$.
   Moreover, $e(T)=3n-2$ and $gf(K_{2n})=2(n-1)^2$. It is known  \cite{shi2017tight}  that $Af(K_{2n})=n^2-n$, so $gf(K_{2n})-Af(K_{2n})=(n-1)(n-2)$.
   \end{proof}

    Finally, for any positive integer $k$ we will construct a matching covered graph $G_k$ such that $gf(G_k)-Af(G_k)=-k$.
    Let $T_i$ be a copy of the triangular prism, where $1\leq i\leq k$. Let $u_1^{(i)}u_2^{(i)}u_3^{(i)}u_1^{(i)}$ and  $u_4^{(i)}u_5^{(i)}u_6^{(i)}u_4^{(i)}$ be two triangles in $T_i$, and $u_1^{(i)}u_4^{(i)}$, $u_2^{(i)}u_5^{(i)}$ and $u_3^{(i)}u_6^{(i)}$ are three remaining edges in $T_i$. We join consecutively $T_1,T_2,\ldots,T_k$ with edges $u_1^{(i)}u_1^{(i+1)}$ and  $u_4^{(i)}u_4^{(i+1)}$, for each $1\leq i\leq k-1$, resulting in a graph $G_k$ (see Fig. \ref{f7_a}). Dr. Kai Deng gave $gf(G_1)-Af(G_1)=-1$. In general we have the following result.
      \begin{figure}[!htbp]
    \centering
    \subfigure[Graph $G_k$ for $k=3$]{\label{f7_a}
        \includegraphics[width=0.3\textwidth]{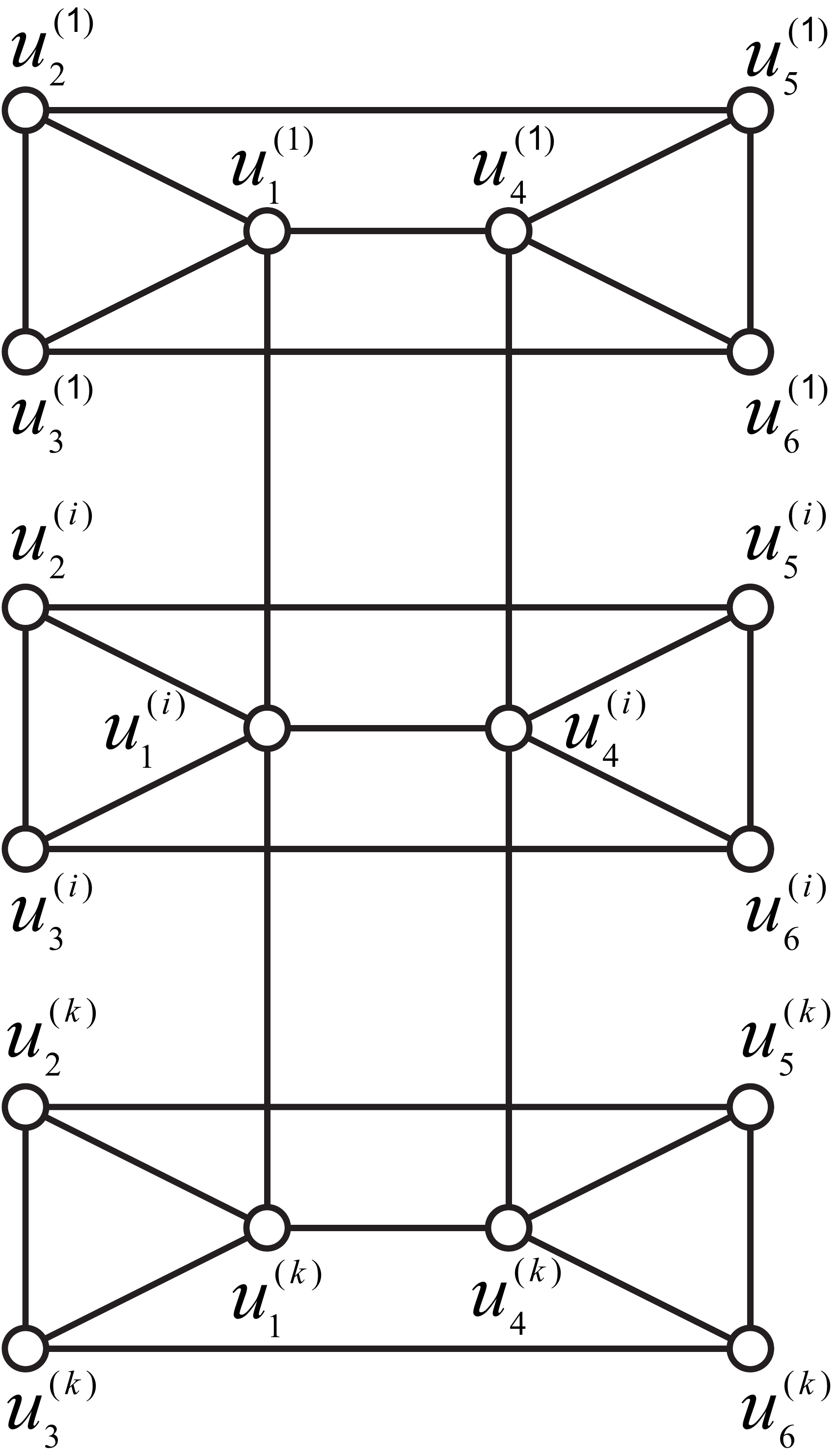}}\ \ \ \ \ \ \ \ \ \ \
    \subfigure[Graph $G_k-F$ for $k=3$]{\label{f7_b}
        \includegraphics[width=0.3\textwidth]{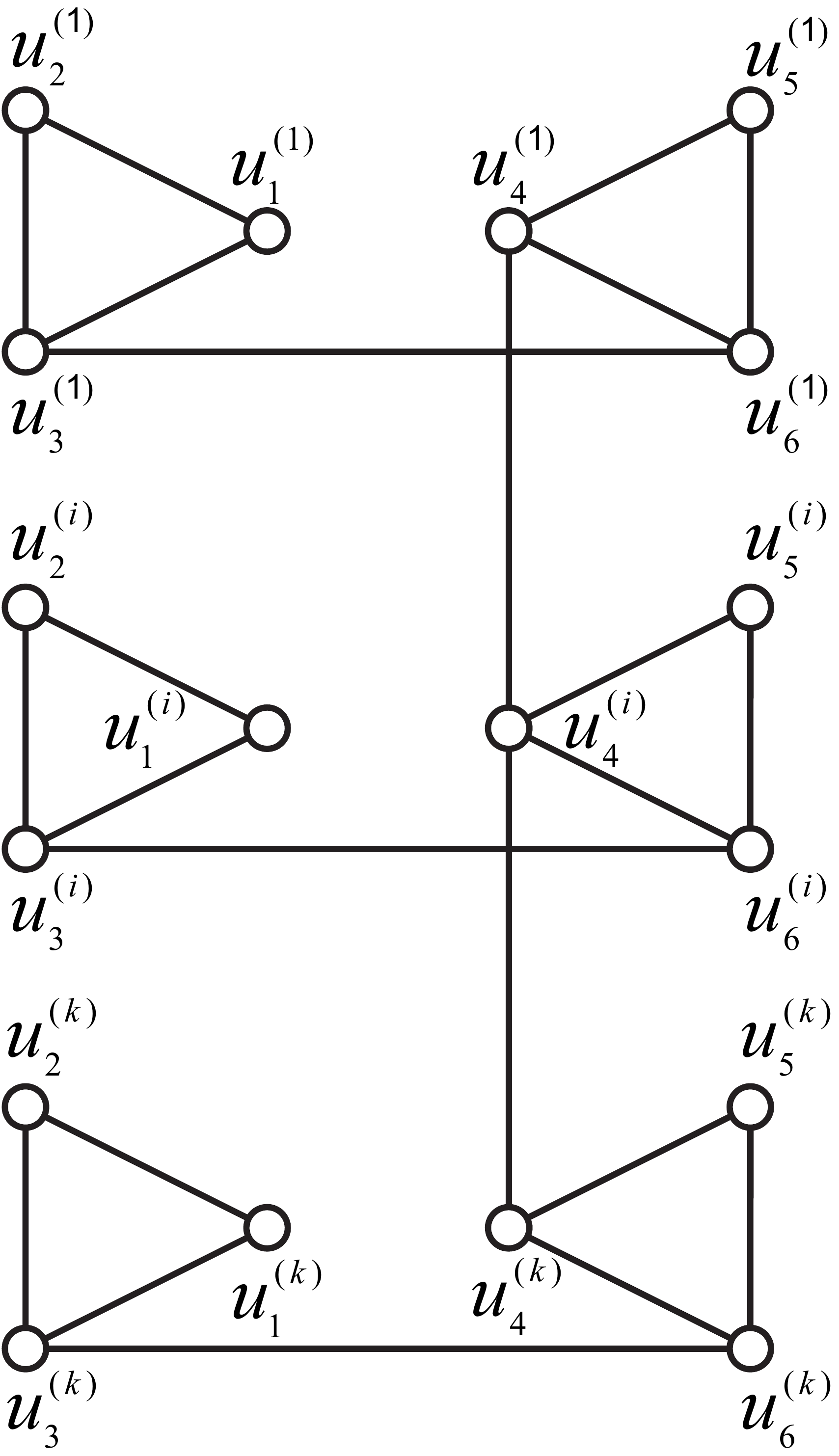}}
    \caption{Illustration for the proof of Proposition \ref{Af-gf=k}}
    \end{figure}
    \begin{Pro}
        \label{Af-gf=k}
         For any integer $k\geq 1$, $G_k$ is a matching covered graph, $gf(G_k)=3k-1$ and $Af(G_k)=4k-1$. Thus, $gf(G_k)-Af(G_k)=-k$.
    \end{Pro}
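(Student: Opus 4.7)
The plan is to handle the three assertions of the proposition---matching coveredness, $Af(G_k)=4k-1$, and $gf(G_k)=3k-1$---in order. For matching coveredness I would exhibit, for each edge of $G_k$, a perfect matching containing it. The ``vertical'' matching $M^*:=\{u_j^{(i)}u_{j+3}^{(i)}:1\le i\le k,\ 1\le j\le 3\}$ covers every rung; a triangle edge of a prism is covered by combining the natural local matching of that prism (e.g.\ $\{u_1^{(i)}u_2^{(i)},u_4^{(i)}u_5^{(i)},u_3^{(i)}u_6^{(i)}\}$ and its analogues) with $M^*$ on the remaining prisms; and a connecting edge $u_1^{(i)}u_1^{(i+1)}$ is covered by using both connecting edges simultaneously, the internal matching $\{u_2^{(j)}u_3^{(j)},u_5^{(j)}u_6^{(j)}\}$ inside $T_i$ and $T_{i+1}$, and $M^*$ elsewhere.

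For $Af(G_k)$, Theorem~\ref{Af(G)<} gives $Af(G_k)\le \frac{2e(G_k)-v(G_k)}{4}=\frac{2(11k-2)-6k}{4}=4k-1$. To match this I would show $M^*$ is nice using Theorem~\ref{N.P.M}: for any two distinct edges of $M^*$ the two ``cross-adjacencies'' between their endpoints come in symmetric pairs---within one prism, the two triangle edges $u_j u_{j'}$ and $u_{j+3}u_{j'+3}$ are both present; across adjacent prisms, the two connecting edges $u_1^{(i)}u_1^{(i+1)}$ and $u_4^{(i)}u_4^{(i+1)}$ are both present; and all other potential cross-adjacencies are absent---so $af(G_k,M^*)=4k-1$ and hence $Af(G_k)=4k-1$.

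The upper bound $gf(G_k)\le 3k-1$ follows from Lemma~\ref{GF.T}: take $T$ to consist of the two triangles of every prism, the single rung $u_3^{(i)}u_6^{(i)}$ in each prism, and the single connecting edge $u_1^{(i)}u_1^{(i+1)}$ between each adjacent pair, giving $e(T)=6k+k+(k-1)=8k-1$. Then $T$ is spanning and connected; every connecting edge of $T$ is a cut edge (inside each prism the two triangles are joined in $T$ only by the single rung $u_3^{(i)}u_6^{(i)}$), so no cycle of $T$ crosses two prisms. Therefore every cycle of $T$ is one of the $2k$ triangles, all odd and hence not nice, so Lemma~\ref{GF.T} yields $gf(G_k)\le e(G_k)-e(T)=3k-1$.

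For the matching lower bound I would use Lemma~\ref{nice subgraph} with two families of nice subgraphs: each single prism $T_j$, and each consecutive pair $T_i\cup T_{i+1}$ (with its two connecting edges), the latter isomorphic to $G_2$. Both families are nice because the remaining prisms always admit the vertical matching. The base values $gf(T_1)=2$ and $gf(G_2)=5$ I would obtain from Theorem~\ref{gf>phi(G)} together with $\Phi(T_1)=4$ and $\Phi(G_2)=20$ (the latter by splitting on whether $0$ or $2$ of the connecting edges lie in the matching, contributing $4^2+2^2=20$), matched by explicit constructions as in the upper bound. Lemma~\ref{nice subgraph} then gives, for any minimum global forcing set $S$, $|S\cap E(T_{i+1})|\ge 2$ and $|S\cap E(T_i\cup T_{i+1})|\ge 5$ for every $1\le i\le k-1$; subtracting yields $|S\cap E(T_i)|+|S\cap\{u_1^{(i)}u_1^{(i+1)},u_4^{(i)}u_4^{(i+1)}\}|\ge 3$ for each such $i$. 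Since the sets $E(T_i)\cup\{u_1^{(i)}u_1^{(i+1)},u_4^{(i)}u_4^{(i+1)}\}$ for $1\le i<k$ together with $E(T_k)$ partition $E(G_k)$, summing gives $|S|\ge 3(k-1)+2=3k-1$. The main obstacle, I expect, is establishing $gf(G_2)=5$ cleanly: the direct bounds $\lceil\log_2\Phi(G_k)\rceil$ and $F(G_k)$ are not tight enough for $k\ge 3$, so using Lemma~\ref{nice subgraph} on pairs of prisms (and not merely single prisms) is essential; once $gf(G_2)=5$ is secured via the explicit count $\Phi(G_2)=20$, the partition--subtraction argument snaps together.
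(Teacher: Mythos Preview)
Your treatments of matching-coveredness, of $Af(G_k)=4k-1$ via Theorem~\ref{N.P.M}, and of the upper bound $gf(G_k)\le 3k-1$ are all fine and essentially coincide with the paper's. The gap is in the lower bound $gf(G_k)\ge 3k-1$.

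The ``subtraction'' step is invalid. Writing $a_i=|S\cap E(T_i)|$ and $b_i=|S\cap\{u_1^{(i)}u_1^{(i+1)},u_4^{(i)}u_4^{(i+1)}\}|$, you have $a_{i+1}\ge 2$ and $a_i+b_i+a_{i+1}\ge 5$; from these you \emph{cannot} deduce $a_i+b_i\ge 3$, since that would require $a_{i+1}\le 2$, the opposite inequality. Worse, the whole strategy of combining only the constraints $a_i\ge 2$ (from $gf(T_i)=2$) and $a_i+b_i+a_{i+1}\ge 5$ (from $gf(G_2)=5$) cannot possibly yield $\sum_i a_i+\sum_i b_i\ge 3k-1$: for $k=3$ the assignment $a_1=a_3=2$, $a_2=3$, $b_1=b_2=0$ satisfies every constraint but sums to $7<8$. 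Nor can you rescue this by invoking Theorem~\ref{gf>phi(G)} on longer intervals: one checks $\Phi(G_3)=4^3+2\cdot(2\cdot2\cdot4)=96$, so $\lceil\log_2\Phi(G_3)\rceil=7<8$, and in general $\Phi(G_k)$ grows only exponentially while $3k-1$ is the target, so the logarithmic bound is too weak for $k\ge 3$.

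The paper's lower bound is obtained by a genuinely structural induction rather than by counting matchings. It sets $F_1=E(T_1)\cup B_1$, $F_k=E(T_k)\cup B_{k-1}$, and $F_i=E(T_i)\cup B_{i-1}\cup B_i$ for $2\le i\le k-1$, and proves the claim that for any minimum global forcing set $S$ either $|S\cap F_1|\ge 3$, or $|S\cap F_k|\ge 3$, or $|S\cap F_i|\ge 4$ for some interior $i$. If the claim fails, one argues that $S\subseteq\bigcup_i E(T_i)$ (via the induction hypothesis applied to $G_k-T_k$ and $G_k-T_1$, forcing equality everywhere and hence disjointness of the $S\cap F_i$), and then, since each end prism carries only two edges of $S$, one can thread a nice cycle through the two connecting paths $u_1^{(1)}u_1^{(2)}\cdots u_1^{(k)}$ and $u_4^{(1)}u_4^{(2)}\cdots u_4^{(k)}$ together with suitable $u_1^{(i)}$--$u_4^{(i)}$ paths inside $T_1$ and $T_k$ that avoid $S$---a contradiction. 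Once the claim holds, one peels off $T_i$ and applies induction to the remaining nice piece(s). This structural step is what your argument is missing.
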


    \begin{proof}
    Take a perfect matching $M$ of $G_k$ as       $M=\left\{u_1^{(i)}u_4^{(i)},u_2^{(i)}u_5^{(i)},u_3^{(i)}u_6^{(i)}|1\leq i\leq k\right\}$.
    We can see that each edge of $G_k$ belongs to an $M$-alternating cycle in $G_k$, so $G_k$ is matching covered.
    From Theorem \ref{N.P.M}, $M$ is a nice perfect matching of $G_k$, so
    $Af(G_k)=\frac{2e(G_k)-v(G_k)}{4}=\frac{2(11k-2)-6k}{4}=4k-1$.

     Now we consider the global forcing number of $G_k$. Set
    \begin{displaymath}
        F=\left\{u_1^{(i)}u_4^{(i)},u_2^{(i)}u_5^{(i)}|1\leq i\leq k\right\}\cup \left\{u_1^{(i)}u_1^{(i+1)}|1\leq i\leq k-1\right\}.
    \end{displaymath}
    Because all cycles of $G_k-F$ are triangles (see Fig. \ref{f7_b}), it contains no nice cycles of $G_k$, so $gf(G_k))\leq \left|F\right|=3k-1$.

    Next, we prove that $gf(G_k)\geq 3k-1$ by induction on $k$. If $k=1$, then $G_k$ is a triangular prism. We know $gf(G_k)\geq 2$. Otherwise, $G_k$ has a global forcing set of a single edge, which belongs to at most two of three nice cycles  $u_1^{(1)}u_2^{(1)}u_5^{(1)}u_4^{(1)}u_1^{(1)}$, $u_2^{(1)}u_3^{(1)}u_6^{(1)}u_5^{(1)}u_2^{(1)}$ and $u_1^{(1)}u_3^{(1)}u_6^{(1)}u_4^{(1)}u_1^{(1)}$, a contradiction. We now consider $G_k$ with $k\geq 2$. Suppose that for such graphs with less than $k$ triangular prisms, the assertion holds. Let
    \begin{align}
        F_1&=E(T_1)\cup \left\{u_1^{(1)}u_1^{(2)},u_4^{(1)}u_4^{(2)}\right\},\nonumber\\
        F_k&=E(T_k)\cup \left\{u_1^{(k-1)}u_1^{(k)},u_4^{(k-1)}u_4^{(k)}\right\},\nonumber\\
        F_i&=E(T_i)\cup \left\{u_1^{(i-1)}u_1^{(i)},u_1^{(i)}u_1^{(i+1)},u_4^{(i-1)}u_4^{(i)},u_4^{(i)}u_4^{(i+1)}\right\},\ {\rm where} \ 2\leq i\leq k-1.\nonumber
    \end{align}
    Then $E(G_k)=\bigcup\limits_{i=1}^{k}F_i$. Let $S$ be a minimum global forcing set of $G_k$. We claim $\left|S\cap F_1\right|\geq 3$ or $\left|S\cap F_k\right|\geq 3$ or $\left|S\cap F_i\right|\geq 4$ for some $2
    \leq i\leq k-1$. Otherwise, $\left|S\cap F_1\right|\leq 2$, $\left|S\cap F_k\right|\leq 2$
   and for all $2\leq i\leq k-1$, $\left|S\cap F_i\right|\leq 3$. In this case, we can construct a nice cycle of $G_k$ not passing through any edge of $S$.
   Let $G'=G_k-T_k$. By induction hypothesis, $gf(G')\geq 3k-4$.
    Since $G'$ is a nice subgraph of $G_k$, by Lemma \ref{nice subgraph}, $S|_{G'}$ is a global forcing set of $G'$. We have
   \begin{eqnarray}
      3k-4&\leq& gf(G')\leq\left|S\cap E(G')\right|\nonumber\\
           &\leq& |\bigcup\limits_{i=1}^{k-1}(S\cap F_i)|
              \leq \sum\limits_{i=1}^{k-1}|S\cap F_i|
              \leq 2+3(k-2)=3k-4\nonumber.
   \end{eqnarray}
   So $\left|S\cap F_1\right|=2$, $\left|S\cap F_i\right|=3$ and $(S\cap F_{i-1})\cap(S\cap F_i)=\varnothing$ for all $2\leq i\leq k-1$. For $G_k-T_1$, in an analogous argument, we get $\left|S\cap F_k\right|=2$ and $(S\cap F_{k-1})\cap(S\cap F_k)=\varnothing$. Thus $S\subseteq \bigcup\limits_{i=1}^{k}E(T_i)$.
   For $i=1$ and $k$, since $\left|S\cap E(T_i)\right|=2$, we can take one path $P_i$ in $T_i$ from three edge-disjoint paths between $u_1^{(i)}$ and $u_4^{(i)}$ such that $S\cap E(P_i)=\varnothing$. We can check that such two paths $P_1$ and $P_k$ with two paths $P=u_1^{(1)}u_1^{(2)}\ldots u_1^{(k)}$ and $P'=u_4^{(1)}u_4^{(2)}\ldots u_4^{(k)}$ form a nice cycle of $G_k$, a contradiction. So the claim is verified.

    If $\left|S\cap F_k\right|\geq 3$ (similarly for the case $|S\cap F_1|\geq 3$), as before by the induction hypothesis we have
   \begin{displaymath}
      \left|S\right|=\left|S\cap F_k\right|+\left|S\cap E(G')\right|\geq 3+(3k-4)=3k-1.
   \end{displaymath}
    So suppose that there is an integer $2\leq i\leq k-1$ such that $\left|S\cap F_i\right|\geq 4$.
   Then $G_k-T_i$ has two components, denoted by $G_1'$ and $G_2'$. Using the induction hypothesis to $G'_1$ and $G'_2$, $gf(G'_1)+gf(G'_2)\geq 3(k-1)-2$. Further,
   \begin{displaymath}
      \left|S\right|=\left|S\cap F_i\right|+\left|S\cap E(G'_1)\right|+\left|S\cap E(G'_2)\right|\geq 4+gf(G'_1)+gf(G'_2)\geq 4+3(k-1)-2=3k-1.
   \end{displaymath}
   In conclusion, we get $gf(G_k)=3k-1$. Thus $gf(G_k)-Af(G_k)=-k$.
\end{proof}


\begin{thebibliography}{99}\setlength{\itemsep}{0mm}\linespread{1.2}\selectfont
\bibitem{cai2012global}
J. Cai, H. Zhang, Global forcing number of some chemical graphs, MATCH Commun. Math. Comput. Chem. 67 (2012) 289-312.
\bibitem{de2012Pfaffian}
M. H. de Carvalho, C. L. Lucchesi, U. S. R. Murty, A generalization of Little's Theorem on Pfaffian orientations, J. Combin. Theory Ser. B 102 (2012) 1241-1266.
\bibitem{de2002conjecture}
M. H. de Carvalho, C. L. Lucchesi, U. S. R. Murty, On a conjecture of Lov\'asz concerning bricks. I. The characteristic of a matching covered graph, J. Combin. Theory Ser. B 85 (2002) 94-136.
\bibitem{de2004perfect}
M. H. de Carvalho, C. L. Lucchesi, U. S. R. Murty, The perfect matching polytope and solid bricks, J. Combin. Theory Ser. B 92 (2004) 319-324.
\bibitem{de2006brick}
M. H. de Carvalho, C. L. Lucchesi, U. S. R. Murty, How to build a brick, Discrete Math. 306 (2006) 2383--2410.
\bibitem{Chen2019solid}G. Chen, X. Feng, F. Lu,  L. Zhang, Disjoint odd cycles in cubic solid bricks,  SIAM J. Discrete Math. 33 (2019) 393--397.
\bibitem{Kai2017Anti}
K. Deng, H. Zhang, Anti-forcing spectra of perfect matchings of graphs, J. Comb. Optim. 33 (2017) 660-680.
\bibitem{deng2017extremal}
K. Deng, H. Zhang, Extremal anti-forcing numbers of perfect matchings of graphs, Discrete Appl. Math. 224 (2017) 69-79.
\bibitem{Diwan2019}A.A. Diwan, The minimum forcing number of perfect matchings in the hypercube,
Discrete Math.  342  (2019) 1060-1062.
 \bibitem{D.T2007Global}
T. Do\v sli\'c, Global forcing number of benzenoid graphs,
J. Math. Chem. 41 (2007) 217-229.
\bibitem{edmonds1965maximum}
J. Edmonds, Maximum matching and a polyhedron with (0,1) vertices, J. Res. Nat. Bur. Stand. B 69 (1965) 125-130.
\bibitem{edmonds1982brick}
J. Edmonds, L. Lov\'asz, W.R. Pulleyblank, Brick decomposition and the matching rank of graphs, Combinatorica 2 (1982) 247-274.
\bibitem{harary1991graphical}
F. Harary, D. J. Klein, T. P. \v Zivkovi\'c, Graphical properties of polyhexes: perfect matching vector and forcing, J. Math. Chem. 6 (1991) 295-306.
\bibitem{Kawarabayashi2013disjoint}
K. Kawarabayashi and K. Ozeki, A simpler proof for the two disjoint odd cycles theorem, J. Combin. Theory Ser. B 103 (2013) 313-319.
\bibitem{klein1987innate}
D. J. Klein, M. Randi\'c, Innate degree of freedom of a graph,
J. Comput. Chem. 8 (1987) 516-521.
\bibitem{Klein2014}D.J. Klein, V. Rosenfeld, Forcing, freedom, and uniqueness in graph theory and chemistry, Croat. Chem. Acta 87 (2014) 49-59.

\bibitem{Kotzig1959linear}
A. Kotzig, On the theory of finite graphs with a linear factor II, Mat.-Fyz. \v Casopis Slovensk. Akad. Vied 9 (1959) 136-159. (Slovak).
\bibitem{Lei2016Anti}
H. Lei, Y.-N. Yeh, H. Zhang, Anti-forcing numbers of perfect matchings of graphs, Discrete Appl. Math. 202 (2016) 95-105.
\bibitem{Li1997Hexagonal}
X. Li, Hexagonal systems with forcing single edges, Discrete Appl. Math. 72 (1997) 295-301.
\bibitem{liu2014global}
X. Liu, S. Xu, L. Tu, Global forcing numbers of handgun-shaped benzenoid systems, Curr. Nanosci. 10 (2014) 766-771.
\bibitem{lovasz1987matching}
L. Lov\'asz, Matching structure and the matching lattice, J. Combin. Theory Ser. B 43 (1987) 187-222.
\bibitem{lovasz2009matching}
L. Lov\'asz, M. D. Plummer, Matching Theory, Ann. Discrete Math. Vol. 29 (North-Holland, Amsterdam, 1986).
\bibitem{Lucchesi2018unsolved}
C. L. Lucchesi, M. H. de Carvalho, N. Kothari, U. S. R. Murty, On two unsolved problems concerning matching covered graphs, SIAM J. Discrete Math. 32 (2018) 1478-1504.
\bibitem{Pachter1998}
L. Pachter, P. Kim, Forcing matchings on square grids, Discrete Math. 190 (1998) 287-294.
\bibitem{Riddle2002The}
M. E. Riddle, The minimum forcing number for the torus and hypercube, Discrete Math. 245 (2002) 283-292.
\bibitem{sedlar2012global}
J. Sedlar, The global forcing number of the parallelogram polyhex, Discrete Appl. Math. 160 (2012) 2306-2313.
\bibitem{shi2017maximum}
L. Shi, H. Wang, H. Zhang, On the maximum forcing and anti-forcing numbers of (4, 6)-fullerenes, Discrete Appl. Math. 233 (2017) 187-194.
\bibitem{shi2017tight}
L. Shi, H. Zhang, Tight upper bound on the maximum anti-forcing numbers of graphs, Discrete Math. Theor. Comput. Sci. 19 (2017) 9-15.
\bibitem{vukivcevic2004total}
D. Vuki\v cevi\'c, J. Sedlar, Total forcing number of the triangular grid, Math. Commun. 9 (2004) 169-179.
\bibitem{vukiveevic2007anti}
D. Vuki\v cevi\'c, N. Trinajsti\'c, On the anti-forcing number of benzenoids, J. Math. Chem. 42 (2007) 575-583.
\bibitem{vukiveevic2008Kekule}
D. Vuki\v cevi\'c, N. Trinajsti\'c, On the anti-Kekul\'e number and anti-forcing number of cata-condensed bezenoids, J. Math. Chem. 43 (2008) 719-726.
\bibitem{xu2013maximum}
L. Xu, H. Bian, F. Zhang, Maximum forcing number of hexagonal systems, MATCH Commun. Math. Comput. Chem. 70 (2013) 493-500.
\bibitem{zhang2014global}
H. Zhang, J. Cai, On the global forcing number of hexagonal systems, Discrete Appl. Math. 162 (2014) 334-347.
\bibitem{zhang2016polyomino}	H. Zhang, X. Zhou, A maximum resonant set of polyomino graphs, Discussiones Mathematicae Graph Theory 36(2) (2016) 323-337
\bibitem{Zhou2015forcing}X. Zhou, H. Zhang, Clar sets and maximum forcing numbers of hexagonal systems, MATCH Commun. Math. Comput. Chem. 74 (2015) 161-174.
\bibitem{Zhou2016polyomino}X. Zhou, H. Zhang, A minimax result for perfect matchings of a polyomino graph, Discrete Applied Mathematics 206 (2016) 165-171.
\end{thebibliography}
\end{document}